\journalname{}
\date{ \phantom{b} \vspace{45mm}\phantom{e}}
\newcommand{\Z}{\mathbb{Z}}
\newcommand{\R}{\mathbb{R}}
\newcommand{\C}{\mathbb{C}}
\newcommand{\mXG}{{ \mathrm{\boldsymbol{X}}_\Gamma}}
\newcommand{\mVG}{{\mathrm{\boldsymbol{V}}_\Gamma}}
\renewcommand{\Re}{\mathrm{Re}\,}
\newcommand{\pt}{\partial_t}
\newcommand{\pttau}{\partial_t^\tau}
\newcommand{\sg}{\nabla_\Gamma}
\newcommand{\ginc}{\boldsymbol{g}^{\textnormal{inc}}}
\newcommand{\Einc}{\boldsymbol{E}^\textnormal{inc}}
\newcommand{\Bimp}{\Cald_\textnormal{imp}}
\newcommand{\boldsymb}{\boldsymbol }
\newcommand{\bpsi}{\boldsymbol{\psi}}
\newcommand{\bphi}{\boldsymbol{\phi}}
\newcommand{\bvar}{\boldsymbol{\varphi}}
\newcommand{\Om}{\Omega}
\newcommand{\wginc}{\widehat{\g}^{\textnormal{inc}}}
\newcommand{\wvarphi}{\widehat{\bvar}}
\newcommand{\wpsi}{\widehat{\bpsi}}
\newcommand{\divG}{\operatorname{div}_\Gamma}
\DeclareMathOperator{\curl}{\mathbf{curl}}
\newcommand{\norm}[1]{\left\lVert#1\right\rVert}
\newcommand{\abs}[1]{\left|#1\right|}
\newcommand{\jmp}[1]{[#1]}
\newcommand{\avg}[1]{\{#1\}}
\renewcommand{\Re}{\operatorname{Re}}
\newcommand{\dist}{\operatorname{dist}}
\newcommand{\inc}{^\textnormal{inc}}
\newcommand{\scat}{^\textnormal{scat}}
\newcommand{\tot}{^\textnormal{tot}}
\newcommand{\normal}{\boldsymbol{\nu}}
\newcommand{\gaT}{\boldsymb{\gamma}_{T}}
\newcommand{\A}{\boldsymb A}
\newcommand{\B}{\boldsymb H}
\newcommand{\Bd}{\boldsymb B}
\newcommand{\bC}{\boldsymb C}
\newcommand{\E}{\boldsymb E}
\newcommand{\Id}{\boldsymb{I\!\hspace{0.7pt}d}}
\renewcommand{\H}{\boldsymb H}
\newcommand{\K}{\boldsymb K}
\newcommand{\bL}{\boldsymb L}
\newcommand{\bR}{\boldsymb R}
\newcommand{\U}{\mathcal U}
\newcommand{\V}{\boldsymb V}
\newcommand{\W}{\mathcal W}
\newcommand{\X}{\boldsymb X}
\renewcommand{\Z}{\boldsymb Z}
\newcommand{\Cald}{\boldsymb C}
\renewcommand{\a}{\boldsymb a}
\renewcommand{\b}{\boldsymb b}
\newcommand{\ej}{\boldsymbol{e_{j}}}
\renewcommand{\b}{\boldsymb b}
\newcommand{\g}{\boldsymb g}
\newcommand{\p}{\boldsymb p}
\renewcommand{\u}{\boldsymb u}
\renewcommand{\v}{\boldsymb v}
\newcommand{\x}{\boldsymb x}
\newcommand{\y}{\boldsymb y}
\newcommand{\cqb}{\boldsymb b}
\newcommand{\cqc}{\boldsymb c}
\newcommand{\cqg}{\boldsymb g}
\newcommand{\cqC}{\boldsymb C}
\newcommand{\cqK}{\boldsymb K}
\newcommand{\cqH}{\boldsymb H}
\newcommand{\cqI}{\boldsymb I}
\newcommand{\cqL}{\boldsymb L}
\newcommand{\cqT}{\boldsymb T}
\newcommand{\cqW}{\boldsymb W}
\newcommand{\cqX}{\boldsymb X}
\newcommand{\cqY}{\boldsymb Y}
\newcommand{\bupsilon}{\boldsymb \upsilon}
\newcommand{\bxi}{\boldsymb \xi}
\newcommand{\Ga}{\Gamma}
\newcommand{\eps}{\varepsilon}
\newcommand{\bone}{\mathbbm{1}}
\newcommand{\Ctrace}{C_\Gamma}
\newcommand{\ctrace}{c_\Gamma}
\begin{document}

\title{%Convolution quadrature for
Time-dependent electromagnetic scattering from thin layers
%\thanks{Grants or other notes
%about the article that should go on the front page should be
%placed here. General acknowledgments should be placed at the end of the article.}
}
%\subtitle{Do you have a subtitle?\\ If so, write it here}

\titlerunning{Time-dependent electromagnetic scattering from thin layers}        % if too long for running head

\author{J\"org Nick \and Bal\'azs Kov\'acs \and Christian~Lubich}

\authorrunning{J.~Nick, B.~Kov\'acs, and Ch.~Lubich} % if too long for running head

\institute{J\"org Nick and Christian Lubich \at
		Mathematisches Institut, Universit\"at T\"{u}bingen,\\
		Auf der Morgenstelle 10, 72076 T\"{u}bingen, Germany \\
		\email{\{nick,lubich\}@na.uni-tuebingen.de} %  \\
		\and
		Bal\'azs Kov\'acs \at
		Faculty of Mathematics, University of Regensburg, \\
		Universit\"atsstra\ss{}e 31, 93049 Regensburg, Germany \\
		\email{balazs.kovacs@mathematik.uni-regensburg.de}
}

%\date{Received: date / Accepted: date}
% The correct dates will be entered by the editor

\date{ }
\maketitle

\begin{abstract}
	The scattering of electromagnetic waves from obstacles with wave-material interaction in thin layers on the surface is described by generalized impedance boundary conditions, which provide effective approximate models.
	In particular, this includes a thin coating around a perfect conductor and the skin effect of a highly conducting material. The approach taken in this work is to derive, analyse and discretize a system of time-dependent boundary integral equations that determines the tangential traces of the scattered electric and magnetic fields. In a familiar second step, the fields are evaluated in the exterior domain by a representation formula, which uses the time-dependent potential operators of Maxwell's equations.
	The  time-dependent boundary integral equation %system of time-dependent boundary integral equations 
	is discretized with Runge--Kutta based convolution quadrature in time and Raviart--Thomas boundary elements in space. 
	 
	Using the frequency-explicit bounds from the well-posedness analysis given here together with known approximation properties of the numerical methods, the full discretization is proved to be stable and convergent, with explicitly given rates in the case of sufficient regularity. Taking the same
	Runge--Kutta based convolution quadrature for discretizing the time-dependent representation formulas, the optimal order of convergence is obtained away from the scattering boundary, whereas an order reduction occurs close to the boundary.
	
	The theoretical results are illustrated by numerical experiments.

	\keywords{Maxwell's equations \and time-domain scattering \and generalized impedance boundary conditions \and time-dependent boundary integral equation \and convolution quadrature \and boundary elements \and stability \and error bounds}
	\subclass{35Q61 \and 78A45 \and 65M60 \and 65M38 \and 78M15 \and 65M12 \and 65R20}
\end{abstract}

\section{Introduction}
This work studies a numerical approach to computing time-domain electromagnetic scattering from obstacles that, due to their material properties, involve multiple scales and yield effective boundary conditions known as generalized impedance boundary conditions.

\subsection{ Time-dependent Maxwell's equations in an exterior domain}
On an exterior Lipschitz domain $\Omega$, which is % usually characterized as
the complement of one or multiple bounded domains, we consider the \emph{time-dependent Maxwell's equations} for the total electric field $\E\tot(\x,t)$ and the total  magnetic  field $\H\tot(\x,t)$,
\begin{align}\label{maxwell-tot}
\begin{split}
\varepsilon\,\pt \E\tot-\curl \H\tot &= 0 \quad  \\
\mu \,\pt \H\tot + \curl \E\tot &=0
\end{split} \quad \text{ in the exterior domain }\Omega.
\end{align}
The permittivity $\varepsilon$ and the permeability $\mu$ are taken as positive constants in~$\Omega$.

 Our interest here is when Maxwell's equations are coupled with nontrivial boundary conditions that describe scattering.  
We assume to be given incident electric and magnetic fields $(\E\inc,\B\inc)$, which are a solution to Maxwell's equations in $\R^3$, and which initially, at time $t=0$, have their support in $\Omega$ and are thus bounded away from the boundary $\Gamma=\partial\Omega$. The objective is to compute the scattered fields $\E\scat=\E\tot-\E\inc$ and $\B\scat=\B\tot-\B\inc$ on a time interval $0\le t \le T$, possibly only at selected space points $\x\in\Omega$, such that the total fields $(\E\tot,\B\tot)$ are
a solution to Maxwell's equations \eqref{maxwell-tot-EB} that satisfies the specified boundary conditions on the boundary $\Gamma$.  As we will construct a numerical method for the computation of the scattered fields, we simply write them as $(\E,\B)=(\E\scat,\B\scat)$.
%where $\J=\J(x,t)$ is a given electric current density.

Throughout the paper, we assume that the physical units are chosen such that
$$%\begin{equation} \label{eps-mu-1}
\eps\mu=c^{-2}=1,
$$%\end{equation}
which can always be achieved by rescaling time $t \to ct$ or frequency $s \to s/c$.
Furthermore, we rescale the field $\mu\H\to  \H$ where we note that $ \mu \H$ is often referred to as the magnetic field $\Bd$ in the physics literature. 
%  We refer to solutions of Maxwell's equation in terms of the electric field $\E^{\text{tot}}$ and the magnetic field $\B^{\text{tot}}$, which equals 
%$$
%\B\tot = \mu \H\tot.
%$$  
% 

For these rescaled fields, Maxwell's equations then simplify to
\begin{align}\label{maxwell-tot-EB}
\begin{split}
\pt \E\tot-\curl \H\tot &= 0 \quad  \\
\pt \H\tot + \curl \E\tot &=0
\end{split} \quad \text{ in the exterior domain }\Omega,
\end{align}
without the constant factors $\varepsilon$ and $\mu$. 
 This will prove to be a convenient setting for the presentation. 
%This will be convenient later on. 

%The standard boundary condition employed assumes a perfectly conducting obstacle and reduces to
%\begin{align*}
%E \times \normal = 0 \quad \text{ on }\Gamma,
%\end{align*}
%where $\normal$ denotes the outer unit normal vector.
%This boundary condition is the 0-th order approximation %of most generalized impedance boundary conditions.

\subsection{ Generalized impedance boundary conditions}
The time-dependent generalized impedance boundary conditions %\cite{AH97,AN96,AN99}  
studied here are of the form
\begin{align}\label{GIBC_tot}
\E\tot_T +\Z(\pt)\left(\B\tot\times \normal \right)=0 \quad \text{ on }\Gamma=\partial\Omega,
\end{align}
where $\normal$ denotes the unit surface normal  pointing into the exterior domain $\Om$,  $\E\tot_T$ denotes the tangential component of the total electric field $\E\tot$ on the scattering surface $\Gamma$, and $\Z(\pt)$ is a combined surface differential operator and temporal convolution operator, which in the following is called the  \textit{time-dependent impedance operator}.

 We now give some examples of operators $\Z(\pt)$ from the literature and provide references. 
These boundary operators often contain small quantities, each corresponding to a different physical value. To unify our notation, we will make use of a small parameter $\delta > 0$.
%, which corresponds to a different physical quantity in each boundary condition, but for our purposes is understood to be a positive value, which is potentially close to $0$.

\subsubsection*{ Obstacles with thin coating}
%{\em Thin coating on a perfectly conduction material.} 
The first boundary condition we are interested in is an approximate model for a perfectly conducting material with a thin coating, as introduced by Engquist \& N\'ed\'elec \cite[equation~(4.9)]{EN93} in the time-harmonic setting.  Transferred to the time domain,  it is given by \eqref{GIBC_tot} with
%\begin{align}\label{kin}
%E_T&= \delta (\dfrac{1}{\epsilon %\mu}\pt^{-1}\pt^{-1}\sg\divG(\gaT H)-\pt\gaT H),
%\end{align}
\begin{align}\label{eq:thin_layer_ord1}
\Z(\pt)&=  \delta  \left(\dfrac{\mu^\delta}{\mu}\pt-\Bigl(\dfrac{\varepsilon^\delta }\varepsilon\Bigr)^{-1} \pt^{-1}\sg\divG \right), 
\end{align}

%The analysis conducted here holds for $\pt^{\a lpha}$ for $\abs{\alpha}\le 1$.
\noindent where $\delta\ll 1$  is the layer depth and $\varepsilon^\delta,\mu^\delta$ describe the permittivity and permeability inside the thin layer. Here, $\pt^{-1}$ denotes integration in time. This boundary condition is of first-order accuracy in $\delta$. The problem in the time-harmonic setting with a fixed frequency was analysed by Ammari \& N\'ed\'elec \cite{AN96,AN99} using boundary integral equations. In the time-dependent case we are not aware of an analysis of well-posedness or of numerical analysis. Both will be given here.

The boundary condition \eqref{GIBC_tot} with \eqref{eq:thin_layer_ord1} has been extended in several ways, of which we present a small selection in the following.
The second-order boundary condition for thin layers was derived by Haddar \& Joly \cite[Eq.~(95)]{HJ02}.
%\textcolor{blue}{ (Gleichung Nummer (95))}.
Its time domain formulation reads
%\begin{align}
%E_T&= \delta (\dfrac{1}{\epsilon \mu}\pt^{-1}\sg
%\left[ (1+\delta \mathcal{H})\divG(\gaT %H)\right]+\delta\left(\mathcal{H}-\mathcal{C}\right)\pt\ %gamma_T H-\pt\gaT H),
%\end{align}
\begin{align}\label{eq:thin_layer_ord2}
\Z(\pt)&= 
\delta \left(
\dfrac{\mu^\delta}{\mu}\pt\bigl( 1+\delta\,(\mathcal{H}-\mathcal{C})\bigr)
-
\Bigl(\dfrac{\varepsilon^\delta }\varepsilon\Bigr)^{-1} \pt^{-1}\sg
\left[ (1-\delta \mathcal{H})\divG\right]\right),
\end{align}
where $\mathcal{H}$ is the mean curvature and $\mathcal{C}$ is the curvature tensor.

%\textcolor{blue}{Von hier aus zwei recht interessante Beispiele, aber bin mir nicht sicher ob/wie sie eingebaut werden sollten, da sie im wesentlichen wie die erste Ordnungs- Randbedingung für dünne Schichten zu behandeln sind.}
In \cite{AH97}, the first-order boundary condition is generalized to a model where the permittivity of the thin coating  is not homogenous but depends on the location on $\Gamma$.

%The corresponding impedance boundary operator
%\begin{align}\label{eq:Z_inhom}
%\Z(\pt)&= \delta \left(
%\pt
%-\pt^{-1}\sg
%\left[\varepsilon^\delta(x) \divG
%\right] \right),
%\end{align}
%%\begin{align}\label{eq:thin_coating_inhom}
%%E_T&= \delta \left(\pt^{-1}\sg
%%\left[\divG(\gaT H)
%%\varepsilon^\delta(x)
%%\right]
%%-\pt\gaT H\right),
%%\end{align}
%where $\varepsilon^\delta(x)$ is a positive function, which characterizes the electric permittivity in the thin-layer at the point $x$ (for more detail see \cite{AH97} (60)).

Multiple layers on top of each other is another case of interest, for which effective boundary conditions were recently given in \cite{GLT20}.
% \textcolor{blue}{(SFB!)}.
The corresponding impedance operator is a linear combination of operators \eqref{eq:thin_layer_ord1} with different permittivities.

\subsubsection*{ Highly conductive obstacles}
% {\em Highly conductive obstacles.} 
A boundary condition for the approximation of scattering from highly conductive obstacles was developed by Haddar, Joly \& Nguyen \cite{HJN08}. The skin effect limits the penetration of the wave to a thin layer near the surface, which then can be asymptotically approximated to create a reduced model.
The authors deduce absorbing impedance boundary conditions for time-harmonic Maxwell's equations of multiple orders. Here we restrict our attention to the first- and second-order boundary conditions.
The impedance operator corresponding to the first-order boundary condition reads in the time domain
\begin{align}\label{eq:gibc_absorbing_ord1}
\quad
\Z(\pt)&=%\dfrac{\delta}{\mu} 
\delta \,\pt^{1/2},
\end{align}
where $\delta$ is inversely proportional to the high conductivity, and the fractional derivative $\pt^{1/2}$ is the time derivative of convolution with the kernel $(\pi t)^{-1/2}$.
%\begin{align}\label{eq:gibc_absorbing_ord1}
%E_T&=-\delta \pt^{1/2}\gaT H.
%\end{align}
The second-order impedance operator reads
\begin{align}\label{eq:gibc_absorbing_ord2}
\Z(\pt)&=%\dfrac{\delta}{\mu} \, 
\delta \,\pt^{1/2}-\delta^2\mu(\mathcal{H}-\mathcal{C}),
\end{align}
%\begin{align}\label{eq:gibc_absorbing_ord2}
%E_T&=-\delta \pt^{1/2}\gaT H+\delta^2\left(\mathcal{H}-\mathcal{C}\right),
%\end{align}
where again $\mathcal{H}$ is the mean curvature and $\mathcal{C}$ is the curvature tensor.

\subsection{ Previous related well-posedness analysis and numerical analysis}
% {\em Previous related well-posedness analysis and numerical analysis.} 
Scattering from the above generalized impedance boundary conditions was analysed in the time-harmonic setting for a fixed frequency in the references given above; see in addition Chaulet \cite{Ch16} for well-posedness results in a general framework that partly inspired ours. 
While there is some numerical analysis in the time-harmonic case by  Schmidt \& Hiptmair \cite{SchH15}, we are not aware of any existing numerical analysis of the time-dependent problem as studied here, which requires estimates for the corresponding time-harmonic problem for all frequencies in a complex half-plane in combination with Laplace transform techniques.  Some such frequency-explicit estimates were given for standard electromagnetic boundary integral operators in \cite{BBSV13,ChanMonk2015,KL17}. While conceptually useful, these known estimates do not suffice for a careful numerical analysis of the time-dependent scattering problem with generalized impedance boundary conditions as considered here.

Numerical analysis for time-dependent {\it acoustic} scattering with generalized impedance boundary conditions was recently given in \cite{BLN20}. That paper certainly motivated the present work and it helped for the formulation of the conditions on the impedance operator $\Z(\partial_t)$ and for a first educated guess by analogy as to what a well-posed time-dependent boundary integral equation for the present problem might possibly look like. However, it did not help us in the actual derivation of the boundary integral equation (with its different functional-analytic framework), nor in its well-posedness analysis and numerical analysis, not least because asymptotically sharp frequency-explicit estimates were not fully available in the electromagnetic case and because more refined estimates are needed for the error analysis of the Runge--Kutta based convolution quadrature methods considered here in comparison with the multistep-based convolution quadrature used in \cite{BLN20}.

A fundamental aspect in common with \cite{BLN20} is the use of the coercivity of the Calder\'on operator for frequencies in a complex half-plane, which was proved in \cite{BLS15} for the acoustic case and in \cite{KL17} for the electromagnetic case (with a sign correction in \cite{NKL2020}). A slightly improved version of the electromagnetic coercivity result with explicit constants will be proved here in the course of our well-posedness analysis.

The general approach of approximating wave propagation problems on exterior domains via time-dependent boundary integral equations that are discretized by convolution quadrature in time and boundary elements in space, goes back to \cite{L94}. It has since been often used 
and studied both in the acoustic case, e.g. \cite{LS09,BS09,BLS15,S16,BanjaiRieder}, and in the electromagnetic case, e.g. \cite{ChenMonkWangWeile,BBSV13,ChanMonk2015,KL17}.

% and stable time discretization on the numerical side. 

\subsection{Contributions of this paper}

This paper gives the first well-posedness analysis and numerical analysis for time-dependent electromagnetic scattering with generalized impedance boundary conditions. This faces the following challenges:
\begin{itemize}
 \item Derive a time-dependent boundary integral equation and 
 prove its well-posedness in appropriate trace spaces; use this to prove well-posedness of the time-dependent scattering problem in appropriate spaces.
 \item Prove stability and convergence (with asymptotically sharp error bounds)
 of the full discretization in space and time.
\end{itemize}

Moreover, in the cases of thin coating and highly conductive materials described above, the estimates should be uniform in the small parameter $\delta$.

While the numerical methods used in this paper, i.e.~boundary elements and convolution quadrature, are well-known methods, it is not obvious {\it a priori} how they are applied to the scattering problem at hand and what their stability and convergence properties are here.
The numerical analysis relies on results proved in the well-posedness analysis.

We will propose, analyse and discretize a system of time-dependent boundary integral equations for the tangential traces of the scattered electric and magnetic fields. Once the tangential traces are known, the scattered fields are obtained at arbitrary points in the exterior domain by well-known time-dependent representation formulas built from the single and double layer electromagnetic potential operators.

In order to prove the well-posedness of the system of time-dependent boundary integral equations and the stability and error bounds of the numerical discretization, a major task is to first prove
	%played by  
	asymptotically sharp frequency-explicit estimates for boundary integral operators 
	%the coercivity of the Calder\'on operator 
	for the time-har\-monic Max\-well's equations 
with frequencies in a complex half-plane. These key estimates are derived here systematically via a transmission problem, going beyond related results in the literature. The coercivity of the Calder\'on operator entails the coercivity of the full boundary operator 
%that 
including the impedance operator, which yields the well-posedness of the proposed boundary integral equation together with frequency-explicit solution bounds in the natural norms. From these bounds in the Laplace domain, we infer well-posedness of the corresponding time-dependent boundary integral equation in appropriate spaces
and consequently of the time-dependent scattering problem with generalized impedance boundary conditions.

For the time discretization of the system of time-dependent boundary integral equations and the time-dependent representation formulas for the electric and magnetic fields, we use
convolution quadrature based on Radau Runge--Kutta methods, which was first introduced in \cite{LubichOstermann_RKcq} in the context of parabolic problems and was later studied for wave propagation problems in \cite{BLM11}.  Convolution quadrature was  used and analysed for the numerical solution of various exterior Maxwell problems in \cite{BBSV13,ChanMonk2015,ChenMonkWangWeile} and  of an eddy current problem with an impedance boundary condition in \cite{HiptmairLopezFernandezPaganini}.

For space discretization we use boundary elements as described for boundary integral equations related to Maxwell's equations in the monographs by  N\'ed\'elec \cite{Ned01} and Monk \cite{Monk_book}. Here we have chosen Raviart--Thomas elements.

 Using the frequency-explicit bounds from the well-posedness analysis and known approximation properties of the numerical methods, the full discretization of the time-dependent boundary integral equation is proved to be stable and convergent, with explicitly given rates in the case of sufficient regularity. Taking the same
	Runge--Kutta convolution quadrature for discretizing the time-dependent representation formulas,
	we prove full-order  error bounds in time and space in exterior subdomains $\Omega_d\subset \Omega$ with a fixed positive distance $d$ to  the boundary $\Gamma$, both in the $\H(\curl,\Omega_d)$ norm and in the maximum norm on $\Omega_d$, and we prove error bounds of reduced (actually halved) temporal order on the whole exterior domain $\Omega$ in the
 $\H(\curl,\Omega)$ norm, uniformly over bounded time intervals. The error bounds are uniform in the small parameter $\delta$ of the impedance operators in the cases of thin coating and highly conductive materials described above.

\subsection{Outline}
%The paper is organized as follows:

In Section~\ref{section:SettingsGIBC} we introduce the functional-analytic setting of the paper, prove that the above impedance operators fit this framework, and give an appropriate weak formulation of the generalized impedance boundary condition.
Moreover, we introduce  basic notation used throughout the paper.

Section~\ref{section:time harmonic Maxwell} studies the time-harmonic Maxwell's equations with generalized im\-ped\-ance boundary conditions for frequencies in a complex half-plane. The main result here is a well-posedness result for
the time-harmonic scattering problem with a bound that gives an explicit  dependence on the complex frequency (Theorem~\ref{th:time-harmonic-well-posedness}) and behaves well with respect to the small parameter $\delta$ that appears in the time-harmonic impedance operators described above. On the way to proving this result we prove and use boundedness and coercivity results for time-harmonic boundary integral operators, in particular the Calder\'on operator and a related operator that adds the impedance operator.
We formulate the system of boundary integral equations for the tangential traces of the electric and magnetic fields under generalized impedance boundary conditions and prove its well-posedness, showing a bound that is proportional to the square of the absolute value of the complex frequency divided by its real part.
With the tangential traces, the scattered electric and magnetic fields are obtained from the representation formula that involves the single and double layer electromagnetic potential operators.

Section~\ref{section:time dependent Maxwell with GIBC} transfers the results of Section~\ref{section:time harmonic Maxwell} from the Laplace domain
to the time domain, using the polynomial bounds in the frequency together with Laplace transform techniques. We thus obtain well-posedness of the time-dependent electromagnetic scattering problem with generalized im\-ped\-ance boundary conditions (Theorem~\ref{th:time-dependent-well-posedness}) via a system of time-dependent boundary integral equations for the tangential traces of the electric and magnetic fields, which is discretized numerically in the following sections.

Section~\ref{section:time semi-discrete} briefly recapitulates Runge--Kutta based convolution quadratures and their  error bounds as proved in \cite{BLM11}.
 Combining these quadrature error bounds with the time-harmonic well-posedness results of Section~\ref{section:time harmonic Maxwell}, we obtain error bounds for the semi-discretization in time of the system of time-dependent boundary integral equations of Section~\ref{section:time dependent Maxwell with GIBC} and of the scattered time-dependent electric and magnetic fields obtained from the convolution quadrature time discretization of the time-dependent representation formulas.

%
%Section~\ref{section:space discretization} is built up in a similar way for the boundary element semi-discretization in space and its error bounds.
%
%In Section~\ref{section:full discrete} we combine the results of the previous sections to
In Section~\ref{sec:full} we consider the full discretization of the time-dependent boundary integral equation by Runge--Kutta convolution quadrature in time and Raviart--Thomas boundary elements in space. We
obtain error bounds for the approximate scattered electric and magnetic fields (Theorem~\ref{thm:error-full}). 

In Section~\ref{section:numerics} we present numerical experiments to illustrate our theoretical results and computational aspects.

\section{Framework and analytical background}
\label{section:SettingsGIBC}

We are interested in the solution of the time-dependent Maxwell's equations with generalized impedance boundary conditions in the context of wave scattering. Given an incident wave $\left(\E\inc,\B\inc\right)$, which is a solution to the time-dependent Maxwell's equations on $\R^3$ with initial support in the exterior domain $\Omega$ away from the boundary $\Gamma$, we are interested in computing (possibly in a few selected points $\x$ only) the scattered fields $\E=\E\tot-\E\inc$ and $\B=\B\tot-\B\inc$, which are an outgoing solution to the following initial--boundary value problem of Maxwell's equations:
\begin{alignat}{2}
\pt \E - \curl \B &= 0 \quad &&\text{in} \quad\Omega, \label{MW1}\\
\pt \B + \curl \E &=0 \quad &&\text{in} \quad\Omega,\label{MW2}\\
\E_T +\Z(\pt)\left(\B \times \normal \right) &= \ginc \quad &&\text{on} \quad\Gamma,\label{eq:gibc}
\end{alignat}
where $\E_T=(I-\normal\normal^\top)\E=-(\E\times\normal)\times\normal$ is the tangential component of $\E$ and
\begin{equation}
\label{eq:def g inc}
\ginc = -\bigl(\E\inc_T +\Z(\pt)(\B\inc\times \normal ) \bigr) \quad\text{ on }\Gamma.
\end{equation}
The initial values at $t=0$ are zero in $\Omega$ for both $\E$ and $\B$.

%Throughout the paper, we assume that the physical units are chosen such that
%\begin{equation} \label{eps-mu-1}
%\eps\mu=c^{-2}=1,
%\end{equation}
%which can always be achieved by rescaling time $t \to ct$ or frequency $s \to s/c$.

As the problem has finite wave speed $c=1$, the fields $(\E,\B)$ have bounded support at any time, vanishing beyond a distance $ct$ from the boundary at time~$t$. (In contrast to the time-harmonic problem we therefore need not care about asymptotic conditions as $|\x|\to\infty$.)

%\subsection{Generalized impedance boundary conditions}\label{SettingGIBC}

In this section we describe the functional-analytic framework and show that the above-mentioned examples for $\Z(\pt)$ fit into this general setting. We then give a weak formulation of the boundary condition \eqref{eq:gibc} that is appropriate for our analysis.

\subsection{Tangential trace, trace space $\mXG$ and a further Hilbert space $\mVG\subset\mXG$}
Throughout this paper, we assume that $\Omega$ is the complement of one or several bounded Lipschitz domains in $\R^3$ with boundary surface $\Gamma=\partial\Omega$. For a continuous vector field in the domain, $\v:\overline\Omega\to\C^3$,
we define the {tangential trace}
\begin{align*}
\gaT \v= \v|_\Gamma \times \normal \qquad\text{on }\Gamma,
\end{align*}
where $\normal$ denotes the unit surface normal  pointing into the exterior domain $\Om$.  We note that the tangential component of $\v|_\Gamma$ is $\v_T=(\boldsymb I-\normal\normal^\top)\v|_\Gamma = - (\gaT \v)\times \normal$.

By the version of Green's formula for the $\curl$ operator, we have for sufficiently regular vector fields $\u , \v : \overline\Omega\to\C^3$ that
\begin{equation}\label{eq: Green}
\int_\Omega  \bigl( \curl \u \cdot \v -  \u \cdot \curl \v \bigr)\textrm{d} \x = \int_\Gamma  (\gaT \u \times \normal) \cdot \gaT \v  \,\textrm{d}\sigma,
\end{equation}
where the dot $\cdot$ stands for the Euclidean inner product on $\C^3$, i.e., $\a \cdot \b = \overline{\a}^\top \b$ for $\a,\b\in\C^3$. The right-hand side in
this formula defines a skew-hermitian sesquilinear form on continuous tangential vector fields on the boundary, say $\bphi,\bpsi:\Gamma\to\C^3$, which we write as
\begin{equation}\label{skew}
[\bphi,\bpsi]_\Gamma = \int_\Gamma (\bphi\times\normal)\cdot \bpsi \, \textrm{d}\sigma.
\end{equation}
As it was shown by Alonso \& Valli \cite{AV96} for smooth domains and by Buffa, Costabel \& Sheen \cite{BCS02} for Lipschitz domains
(see also the surveys in \cite[Sect.\,2.2]{BH03} and \cite[Sect.\,5.4]{Ned01}), the trace operator $\gaT$ can be extended to a surjective bounded linear operator from the space that appears naturally for Maxwell's equations,
$
\H(\curl,\Omega) = \{\v \in \bL^2(\Omega)\,:\, \curl \v \in \bL^2(\Omega) \},
$
to the
$$
\text{
	{trace space}: a Hilbert space denoted $\mXG$, with norm $\|\cdot\|_\mXG$.
}
$$
This space is characterized as the tangential subspace of the Sobolev space $\H^{-1/2}(\Gamma)$  with surface divergence in $H^{-1/2}(\Gamma)$ (see the papers cited above for the precise formulation, e.g.~\cite[Section~2.2]{BH03}). It has the property that
the pairing $[ \cdot,\cdot ]_\Gamma$ can be extended to a non-degenerate continuous sesquilinear form on $\mXG\times \mXG$. With this pairing the space $\mXG$ becomes its own dual.

For the treatment of generalized impedance boundary conditions we need a further Hil\-bert space, which is chosen as a dense subspace $\mVG\subset\mXG$  equipped with a (semi-)norm $\abs{\cdot}_{\mVG}$ and the full norm
\begin{equation} \label{full-V-norm}
\norm{\bphi}_{\mVG}^2=\norm{\bphi}_{\mXG}^2+\abs{\bphi}_{\mVG}^2.
\end{equation}
We will choose $\mVG=\mXG\cap \H(\divG,\Gamma)$ with $\H(\divG,\Gamma)= \{\bphi \in \bL^2(\Gamma)\,:\, \divG\, \bphi \in L^2(\Gamma) \}$ for the impedance operators \eqref{eq:thin_layer_ord1} and \eqref{eq:thin_layer_ord2}, and we choose $\mVG=\mXG\cap \bL^2(\Gamma)$ for \eqref{eq:gibc_absorbing_ord1} and
\eqref{eq:gibc_absorbing_ord2}, in all cases with $\abs{\cdot}_{\mVG}$ depending on the small parameter $\delta$.

\subsection{Impedance operator and  recap of  temporal convolution}
\label{subsec:Z}

\noindent 
Let $\Z(s)\colon \mVG \rightarrow \mVG'$, for $\Re s>0$, be an analytic family of bounded linear operators. We assume that $\Z$ is \textit{polynomially bounded}: there exists a real $\kappa$, and for every $\sigma >0$ there exists $M_\sigma <\infty$, such that
\begin{align}\label{eq:pol_bound}
\norm{\Z(s)}_{\mVG'\leftarrow \mVG}&\leq M_\sigma \abs{s}^\kappa, \quad\ \text{ Re } s \ge \sigma>0.
\end{align}
%In our examples we always have $\kappa\le 1$, and for convenience we assume this (inessential) bound of $\kappa$ in the following.
As a key property, we further assume that $\Z$ is of {\it positive type\/}:
for every ${\sigma> \sigma_0\ge 0}$, there exists $c_\sigma>0$ such that
\begin{equation}\label{eq:positive_type}
\Re \langle \bphi, \Z(s)\bphi \rangle_\Gamma \ge c_\sigma\Re  s\, \bigl| s^{-1}\bphi \bigr| _{\mVG}^2
\quad \text{for all } \bphi \in \mVG \text{  and }\text{Re }s\ge \sigma,
\end{equation}
where $\langle\cdot,\cdot\rangle_\Gamma$ denotes the anti-duality between $\mVG$ and $\mVG'$, taken anti-linear in the first argument.
 These conditions are very similar to the conditions imposed on the impedance operator in the acoustic case \cite{BLN20}. 

The bound  \eqref{eq:pol_bound} ensures that $\Z$ is the Laplace transform of a distribution of finite order of differentiation with support on the non-negative real half-line $t \ge 0$. For a function $\g:[0,T]\to \mVG$, which together with its extension by~$0$ to the negative real half-line is sufficiently regular, we use the operational calculus notation
\begin{equation} \label{Heaviside}
\Z(\pt)\g = (\mathcal{L}^{-1}\Z) * \g
\end{equation}
for the temporal convolution of the inverse Laplace transform of $\Z$ with~$\g$. For the multiplication operator $\text{Id}(s)=s$, we have $\text{Id}(\pt)\g = \pt \g$, the time derivative of $\g$.
For two such families of operators $\K(s)$ and $\bL(s)$ mapping into compatible spaces, the associativity of convolution and the product rule of Laplace transforms yield the composition rule
\begin{equation}\label{comp-rule}
\K(\pt)\bL(\pt)\g = (\K\bL)(\pt)\g.
\end{equation}

%% noindent
For a Hilbert space $\V$, we let $\H^r(\R,\V)$ be the Sobolev space of  real   order $r$ of $\V$-valued functions on $\R$,
and on finite intervals $(0, T )$
we denote\footnote{We note that the
subscript 0 in ${H_0^r}$ only refers to the left end-point
of the interval.  }
$$
\H_0^r(0,T;\V) = \{\g|_{(0,T)} \,:\, \g \in \H^r(\R,\V)\ \text{ with }\ \g = 0 \ \text{ on }\ (-\infty,0)\} .
$$
For integer $r\ge 0$, the norm $\| \pt^r \g \|_{\bL^2(0,T;\V)}$ is equivalent to the natural norm on $\H_0^r(0,T;\V)$.
The Plancherel formula yields the following \cite[Lemma 2.1]{L94}:
If $\Z(s)$ is bounded by \eqref{eq:pol_bound} in the half-plane $\text{Re }s > 0$, then $\Z(\pt)$ extends by density to a bounded linear operator $\Z(\pt)$ from $\H^{r+\kappa}_0(0,T;\mVG)$ to $\H^r_0(0,T;\mVG')$ with the bound
\begin{equation}\label{sobolev-bound}
\| \Z(\pt) \|_{ \H^{r}_0(0,T;\mVG') \leftarrow \H^{r+\kappa}_0(0,T;\mVG)} \le e M_{1/ T}
\end{equation}
for arbitrary real $r$. (The bound on the right-hand side arises from the bound $e^{\sigma T} M_\sigma$ on choosing $\sigma=1/T$.)
We note that for any integer $k\ge 0$ and real ${\alpha>\tfrac12}$, we have the continuous embedding $\H^{k+\alpha}_0(0,T;\mVG')\subset \bC^k([0,T];\mVG')$.

The passage from the operators $\Z(s)$, satisfying a polynomial bound \eqref{eq:pol_bound}, to the convolution operators $\Z(\pt)$ and their bound \eqref{sobolev-bound} will be used in the same way also for other operators between different Hilbert spaces in the course of this paper.

\subsection{The impedance operators \eqref{eq:thin_layer_ord1}--\eqref{eq:gibc_absorbing_ord2}}

As the following two lemmas show, the impedance operators listed in the introduction fit into the abstract framework given above.

\begin{lemma}[Thin coating]
\label{lem:EN}
	With the space $\mVG=\mXG\cap \H(\divG,\Gamma)$ and, in \eqref{full-V-norm}, the norm $|\bphi|_\mVG^2= \delta \bigl(\| \bphi \|_{\bL^2(\Gamma)}^2 + \| \divG \bphi \|_{L^2(\Gamma)}^2\bigr)$,
	%the impedance operators \eqref{eq:thin_layer_ord1} and \eqref{eq:thin_layer_ord2}
	%and we choose $\mathcal{X}_\Gamma=\mXG\cap L^2(\Gamma)^3$ for \eqref{eq:gibc_absorbing_ord1} and
	%\eqref{eq:gibc_absorbing_ord2}
	the transfer operators $\Z(s):\mVG\rightarrow \mVG'$ for $\emph{\text{Re }}s>0$ corresponding to the impedance operators \eqref{eq:thin_layer_ord1} and \eqref{eq:thin_layer_ord2} satisfy the bound \eqref{eq:pol_bound} with $\kappa= 1$ and the positivity condition \eqref{eq:positive_type}, with $M_\sigma$ and $c_\sigma>0$ independent of the small parameter $\delta$.
	In the case of \eqref{eq:thin_layer_ord1}, $\sigma_0=0$ for \eqref{eq:positive_type}.
\end{lemma}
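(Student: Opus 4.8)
\emph{Plan.} The plan is to pass to the Laplace domain, where $\Z(\partial_t)$ in \eqref{eq:thin_layer_ord1} and \eqref{eq:thin_layer_ord2} becomes the operator-valued symbol obtained by substituting $\partial_t\mapsto s$ and $\partial_t^{-1}\mapsto s^{-1}$; since the operator coefficients are fixed, this is manifestly an analytic family of bounded operators $\mVG\to\mVG'$ for $\Re s>0$. I would work throughout with the sesquilinear form $\langle\bpsi,\Z(s)\bphi\rangle_\Gamma$ associated with $\Z(s)$, rewritten by one integration by parts on the closed surface $\Gamma$ (with no boundary term), so that for \eqref{eq:thin_layer_ord1} and $\bphi,\bpsi\in\mVG=\mXG\cap\H(\divG,\Gamma)$,
\begin{equation*}
\langle\bpsi,\Z(s)\bphi\rangle_\Gamma=\delta\,\tfrac{\mu^\delta}{\mu}\,s\int_\Gamma\overline{\bpsi}\cdot\bphi\,\textrm{d}\sigma+\delta\,\bigl(\tfrac{\varepsilon^\delta}{\varepsilon}\bigr)^{-1}s^{-1}\int_\Gamma\overline{\divG\bpsi}\,\divG\bphi\,\textrm{d}\sigma ,
\end{equation*}
using $\int_\Gamma\overline{\bpsi}\cdot\sg q=-\int_\Gamma\overline{\divG\bpsi}\,q$ with $q=\divG\bphi$; note that the minus sign in \eqref{eq:thin_layer_ord1} is exactly cancelled by the minus from integration by parts. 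As $\mu^\delta/\mu$ and $\varepsilon^\delta/\varepsilon$ are fixed positive constants, this form is well defined and bounded on $\mVG\times\mVG$, controlled purely through $\|\bphi\|_{\bL^2(\Gamma)}$ and $\|\divG\bphi\|_{L^2(\Gamma)}$.

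From this form both assertions for \eqref{eq:thin_layer_ord1} follow by elementary estimates. For the \emph{polynomial bound}, I would apply Cauchy--Schwarz to the two integrals, absorb the factor $\delta$ via $\delta\|\bphi\|_{\bL^2(\Gamma)}^2\le|\bphi|_\mVG^2$ and $\delta\|\divG\bphi\|_{L^2(\Gamma)}^2\le|\bphi|_\mVG^2$, and use $|s|^{-1}\le\sigma^{-2}|s|$ for $\Re s\ge\sigma$, obtaining $|\langle\bpsi,\Z(s)\bphi\rangle_\Gamma|\le\bigl(\tfrac{\mu^\delta}{\mu}+\sigma^{-2}(\tfrac{\varepsilon^\delta}{\varepsilon})^{-1}\bigr)\,|s|\,|\bpsi|_\mVG|\bphi|_\mVG$; taking the supremum over $\bpsi$ gives \eqref{eq:pol_bound} with $\kappa=1$ and $M_\sigma$ independent of $\delta$. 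For \emph{positivity}, putting $\bpsi=\bphi$ and taking real parts makes both surface integrals nonnegative reals and $\Re(s^{-1})=\Re s/|s|^2$, so
\begin{equation*}
\Re\langle\bphi,\Z(s)\bphi\rangle_\Gamma=\delta\,\Re s\Bigl(\tfrac{\mu^\delta}{\mu}\|\bphi\|_{\bL^2(\Gamma)}^2+\bigl(\tfrac{\varepsilon^\delta}{\varepsilon}\bigr)^{-1}\tfrac1{|s|^2}\|\divG\bphi\|_{L^2(\Gamma)}^2\Bigr)\ge0 .
\end{equation*}
Comparing with $|s^{-1}\bphi|_\mVG^2=\tfrac{\delta}{|s|^2}\bigl(\|\bphi\|_{\bL^2(\Gamma)}^2+\|\divG\bphi\|_{L^2(\Gamma)}^2\bigr)$ and using $|s|^2\ge(\Re s)^2\ge\sigma^2$ for the $\bL^2$-term (the $\divG$-term matching directly) yields \eqref{eq:positive_type} with $c_\sigma=\min\bigl((\tfrac{\varepsilon^\delta}{\varepsilon})^{-1},\tfrac{\mu^\delta}{\mu}\sigma^2\bigr)>0$, independent of $\delta$; since this holds for every $\sigma>0$, the choice $\sigma_0=0$ is admissible.

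For the second-order operator \eqref{eq:thin_layer_ord2} the same integration by parts adds the forms $\delta^2\tfrac{\mu^\delta}{\mu}\,s\int_\Gamma\overline{\bpsi}\cdot(\mathcal H-\mathcal C)\bphi$ and $\delta^2(\tfrac{\varepsilon^\delta}{\varepsilon})^{-1}s^{-1}\int_\Gamma\mathcal H\,\overline{\divG\bpsi}\,\divG\bphi$; in particular $\sg[(1-\delta\mathcal H)\divG\,\cdot\,]$ pairs against $\bpsi$ to $\int_\Gamma(1-\delta\mathcal H)\overline{\divG\bpsi}\,\divG\,\cdot$, which needs only $\mathcal H\in L^\infty(\Gamma)$ and no further regularity of the arguments. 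Since the mean curvature $\mathcal H$ and the symmetric curvature tensor $\mathcal C$ lie in $L^\infty(\Gamma)$, these extra terms are bounded on $\mVG\times\mVG$ by a constant times $\delta\,|s|\,|\bpsi|_\mVG|\bphi|_\mVG$ (using $\delta\le1$), so \eqref{eq:pol_bound} persists with $\kappa=1$ and $M_\sigma$ independent of $\delta$. At $\bpsi=\bphi$ these contributions are real (as $\mathcal H$ is real and $\mathcal C$ symmetric), with absolute value at most $\delta\|\mathcal H-\mathcal C\|_{L^\infty(\Gamma)}$ times the first and $\delta\|\mathcal H\|_{L^\infty(\Gamma)}$ times the second term of the displayed lower bound above; hence for $\delta$ below a threshold depending only on $\|\mathcal H\|_{L^\infty(\Gamma)}$ and $\|\mathcal C\|_{L^\infty(\Gamma)}$ they are absorbed into one half of those two terms, and the positivity argument goes through with $c_\sigma$ replaced by $\tfrac12c_\sigma$.

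\emph{Main obstacle.} The only point requiring genuine care is the second-order case: one must organise the integration by parts so that \emph{every} summand of $\Z(s)$ appears as a bounded sesquilinear form on $\mVG\times\mVG$ estimated solely through $\|\bphi\|_{\bL^2(\Gamma)}$ and $\|\divG\bphi\|_{L^2(\Gamma)}$ (hence through $|\bphi|_\mVG$), and then keep careful track of the powers of $\delta$ so that the curvature corrections are provably of lower order and absorbable uniformly in $\delta$. Everything else reduces to Cauchy--Schwarz and taking real parts.
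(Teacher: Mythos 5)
Your proof is correct and follows the same strategy as the paper's: pass to the Laplace symbol, express the anti-duality as the weak form obtained by one integration by parts on the closed surface (yielding the $\divG$--$\divG$ pairing), then use Cauchy--Schwarz together with the absorption $\delta\|\bphi\|_{\bL^2(\Gamma)}^2\le|\bphi|_\mVG^2$, $\delta\|\divG\bphi\|_{L^2(\Gamma)}^2\le|\bphi|_\mVG^2$ and $|s|^{-1}\le\sigma^{-2}|s|$ for the polynomial bound with $\kappa=1$, and a direct computation of real parts for the positivity condition. The only material addition is that you spell out the absorption argument for the curvature corrections in the second-order operator \eqref{eq:thin_layer_ord2}, which the paper dismisses as ``a straightforward extension''; this part is correct up to a harmless sign slip in the $\mathcal{H}\,\divG$-contribution, which does not affect the absolute-value estimates you use for absorption.
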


\begin{proof}
	We prove the result only for \eqref{eq:thin_layer_ord1}, as the proof for \eqref{eq:thin_layer_ord2} is a straightforward extension. Moreover, we assume $\eps^\delta$ and $\mu^\delta$ to be positive and restrict our attention, for the ease of presentation, to the transfer operator
	\begin{align*}
	\Z(s)&=\delta\bigl(s - s^{-1}\sg\divG\bigr),
	\end{align*}
	for which the anti-duality between $\mVG$ and $\mVG'$ is to be  understood as follows: for $\bphi,\bpsi\in \mVG$, 
	\begin{equation}\label{Z-EN-weak}
	\langle \bphi, \Z(s)\bpsi \rangle_\Gamma =  \delta s \bigl( \bphi, \bpsi \bigr)_{\Gamma} + \delta s^{-1} \bigl( \divG \bphi, \divG \bpsi \bigr)_{\Gamma},
	\end{equation}
	where the round brackets denote the $L^2$ inner product, taken anti-linear in the first argument. 
	This is bounded as follows, abbreviating $m(|s|)=\max(|s|,|s|^{-1})$:
	\begin{align*}
	|\langle \bphi, \Z(s)\bpsi \rangle_\Gamma| &\le  m(|s|)\, \delta \bigl( \| \bphi \|_{\bL^2(\Gamma)} \,\| \bpsi \|_{\bL^2(\Gamma)}  +
	\| \divG \bphi \|_{L^2(\Gamma)} \, \| \divG \bpsi \|_{L^2(\Gamma)} \bigr)
	\\
	&\le  m(|s|)\, \delta \bigl( \| \bphi \|_{\bL^2(\Gamma)}  + \| \divG \bphi \|_{L^2(\Gamma)}  \bigr)
	\bigl( \| \bpsi \|_{\bL^2(\Gamma)}  + \| \divG \bpsi \|_{L^2(\Gamma)}  \bigr)
	\\
	&\le 2 \, m(|s|) \, |\bphi|_\mVG\, |\bpsi|_\mVG
	\\
	&\le 2  \, m(|s|) \, \|\bphi\|_\mVG\, \|\bpsi\|_\mVG.
	\end{align*}
	This yields \eqref{eq:pol_bound} with $\kappa=1$. On the other hand, taking $\bphi=\bpsi$, we have for $\Re  s\ge \sigma>0$
	\begin{align*}
	\Re \langle \bphi, \Z(s)\bphi \rangle_\Gamma &= \delta\, (\Re  s) \, \| \bphi \|_{\bL^2(\Gamma)}^2
	+ \delta \,\frac{\Re s}{|s|^2}\, \| \divG \bphi \|_{L^2(\Gamma)}^2
	\\
	&\ge \delta (\Re  s)\sigma^2 \| s^{-1} \bphi \|_{\bL^2(\Gamma)}^2 + \delta (\Re  s) \| s^{-1} \divG \bphi \|_{L^2(\Gamma)}^2
	\\
	&\ge \min(\sigma^2,1)\, (\Re  s)\, |s^{-1}\bphi|_\mVG^2,
	\end{align*}
	which yields \eqref{eq:positive_type}.
	\qed
\end{proof}

\begin{lemma}[Highly conductive obstacle] \label{lem:absorb}
	With the space $\mVG=\mXG\cap \bL^2(\Gamma)$ and, in \eqref{full-V-norm}, the norm $|\bphi|_\mVG^2= \delta \| \bphi \|_{\bL^2(\Gamma)}^2 $,
	%the impedance operators \eqref{eq:thin_layer_ord1} and \eqref{eq:thin_layer_ord2}
	%and we choose $\mathcal{X}_\Gamma=\mXG\cap \bL^2(\Gamma)$ for \eqref{eq:gibc_absorbing_ord1} and
	%\eqref{eq:gibc_absorbing_ord2}
	the transfer operators $\Z(s):{\mVG\rightarrow \mVG'}$ for $\emph{\text{Re }}s>0$ corresponding to the impedance operators \eqref{eq:gibc_absorbing_ord1} and \eqref{eq:gibc_absorbing_ord2} satisfy the bound \eqref{eq:pol_bound} with $\kappa= 1/2$ and the positivity condition \eqref{eq:positive_type}, with $M_\sigma$ and $c_\sigma>0$ independent of the small parameter $\delta$.
	In the case of \eqref{eq:gibc_absorbing_ord1}, $\sigma_0=0$ for \eqref{eq:positive_type}.
\end{lemma}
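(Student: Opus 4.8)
The plan is to follow the pattern of the proof of Lemma~\ref{lem:EN}: first reduce to the model operator $\Z(s)=\delta\,s^{1/2}$ (with the principal branch of the square root, which is analytic on $\Re s>0$), then establish (i) the polynomial bound \eqref{eq:pol_bound} and (ii) the positivity \eqref{eq:positive_type}, and finally treat the extra, $s$-independent term occurring in \eqref{eq:gibc_absorbing_ord2} by a perturbation argument. Since $\mVG=\mXG\cap\bL^2(\Gamma)$ and $\Z(s)\bpsi=\delta\,s^{1/2}\bpsi\in\bL^2(\Gamma)$ for $\bpsi\in\mVG$, the anti-duality is realised through the $L^2$ inner product, $\langle\bphi,\Z(s)\bpsi\rangle_\Gamma=\delta\,s^{1/2}\,(\bphi,\bpsi)_\Gamma$, anti-linear in the first argument; throughout I would use the defining identity $|\bphi|_\mVG^2=\delta\,\|\bphi\|_{\bL^2(\Gamma)}^2$, equivalently $\delta^{1/2}\|\bphi\|_{\bL^2(\Gamma)}\le\|\bphi\|_\mVG$, which absorbs all $\delta$-dependence into the $\mVG$-norm.

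For (i), the Cauchy--Schwarz inequality gives $|\langle\bphi,\Z(s)\bpsi\rangle_\Gamma|\le|s|^{1/2}\,(\delta^{1/2}\|\bphi\|_{\bL^2(\Gamma)})(\delta^{1/2}\|\bpsi\|_{\bL^2(\Gamma)})\le|s|^{1/2}\,\|\bphi\|_\mVG\,\|\bpsi\|_\mVG$, that is, \eqref{eq:pol_bound} with $\kappa=1/2$ and $M_\sigma=1$ independent of $\delta$. For (ii), I would use that for $\Re s>0$ the principal square root satisfies $\Re s^{1/2}=\sqrt{(|s|+\Re s)/2}\ge|s|^{1/2}/\sqrt2$; taking $\bphi=\bpsi$ and noting $|s^{-1}\bphi|_\mVG^2=|s|^{-2}|\bphi|_\mVG^2$, for $\Re s\ge\sigma>0$ one gets $\Re\langle\bphi,\Z(s)\bphi\rangle_\Gamma=(\Re s^{1/2})\,|\bphi|_\mVG^2\ge(|s|^{1/2}/\sqrt2)\,|\bphi|_\mVG^2\ge(\sigma^{3/2}/\sqrt2)\,\Re s\,|s^{-1}\bphi|_\mVG^2$, where the last step uses $\Re s\le|s|$ and $|s|\ge\sigma$. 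This is \eqref{eq:positive_type} with $c_\sigma=\sigma^{3/2}/\sqrt2>0$, valid for every $\sigma>0$ and independent of $\delta$, hence $\sigma_0=0$ in the case of \eqref{eq:gibc_absorbing_ord1}.

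For \eqref{eq:gibc_absorbing_ord2} I would add the $s$-independent term $-\delta^2\mu(\mathcal{H}-\mathcal{C})$, where $\mathcal{H}-\mathcal{C}$ is a bounded operator on $\bL^2(\Gamma)$ with $\|\mathcal{H}-\mathcal{C}\|_{L^\infty(\Gamma)}=:C_\Gamma<\infty$ (the curvatures of $\Gamma$ being bounded). Its contribution to the pairing is at most $\delta^2\mu C_\Gamma\|\bphi\|_{\bL^2(\Gamma)}\|\bpsi\|_{\bL^2(\Gamma)}=\delta\mu C_\Gamma\,(\delta^{1/2}\|\bphi\|_{\bL^2(\Gamma)})(\delta^{1/2}\|\bpsi\|_{\bL^2(\Gamma)})\le\delta\mu C_\Gamma\|\bphi\|_\mVG\|\bpsi\|_\mVG$, which for $0<\delta\le\delta_0$ keeps \eqref{eq:pol_bound} valid with $\kappa=1/2$ (for $|s|\ge\sigma$ this constant is $\le(\delta_0\mu C_\Gamma/\sigma^{1/2})|s|^{1/2}$). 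For the positivity the term subtracts at most $\delta\mu C_\Gamma|\bphi|_\mVG^2$, so $\Re\langle\bphi,\Z(s)\bphi\rangle_\Gamma\ge(|s|^{1/2}/\sqrt2-\delta\mu C_\Gamma)\,|\bphi|_\mVG^2$; fixing $\sigma_0>0$ with $\sigma_0^{1/2}\ge 2\sqrt2\,\delta_0\mu C_\Gamma$ (i.e.\ $\sigma_0\ge 8(\delta_0\mu C_\Gamma)^2$) makes the bracket $\ge\tfrac12|s|^{1/2}/\sqrt2$ for all $|s|\ge\sigma\ge\sigma_0$ and all $\delta\le\delta_0$, and one concludes \eqref{eq:positive_type} exactly as above with $c_\sigma$ independent of $\delta$.

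I expect no real difficulty: the computations are routine once the identity $|\bphi|_\mVG^2=\delta\|\bphi\|_{\bL^2(\Gamma)}^2$ is exploited. The only genuinely delicate points are keeping track of the branch of $s^{1/2}$ so that the correct power of $|s|$ appears on both sides of \eqref{eq:positive_type}, and, for the second-order operator \eqref{eq:gibc_absorbing_ord2}, controlling the sign-indefinite curvature perturbation uniformly in $\delta$ --- which is precisely why one only obtains $\sigma_0>0$ there rather than $\sigma_0=0$.
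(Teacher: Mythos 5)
Your proof is correct and follows essentially the same route as the paper: realize the duality via the $L^2$ inner product, obtain \eqref{eq:pol_bound} with $\kappa=1/2$ by Cauchy--Schwarz, and derive \eqref{eq:positive_type} from a lower bound on $\Re s^{1/2}$ (you use $\Re s^{1/2}\ge|s|^{1/2}/\sqrt2$, the paper's chain is equivalent and gives $c_\sigma=\sigma^{3/2}$, but the difference in constants is immaterial). The only substantive addition is that you spell out the bounded-perturbation argument for the curvature term in \eqref{eq:gibc_absorbing_ord2}, which the paper dismisses as a ``straightforward extension''; your treatment, including the need for $\sigma_0>0$ and the assumption $\delta\le\delta_0$, is consistent with the lemma's phrasing.
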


\begin{proof}
	We prove the result only for \eqref{eq:gibc_absorbing_ord1}, as the proof for \eqref{eq:gibc_absorbing_ord2} is a straightforward extension. Here, the transfer operator is
	\begin{align*}
	\Z(s)&=\delta\, s^{1/2},
	\end{align*}
	for which the anti-duality between $\mVG$ and $\mVG'$ is to be understood,  for $\bphi,\bpsi\in\mVG$,   as
	\begin{equation}\label{Z-absorb-weak}
	\langle \bphi, \Z(s)\bpsi \rangle_\Gamma =  \delta s^{1/2} \bigl( \bphi, \bpsi \bigr)_{\Gamma} .
	\end{equation}
	Here we obtain without ado
	$$
	|\langle \bphi, \Z(s)\bphi \rangle_\Gamma| \le |s|^{1/2} \|\bphi\|_\mVG\, \|\bphi\|_\mVG,
	$$
	which  implies  \eqref{eq:pol_bound} with $\kappa=1/2$,
	and for $\Re  s\ge \sigma>0$ we have
	$$
	\Re \langle \bphi, \Z(s)\bphi \rangle_\Gamma \ge \delta (\Re  s^{1/2}) \, \| \bphi \|_{\bL^2(\Gamma)}^2 \ge
	\sigma^{3/2}(\Re  s) | s^{-1}\bphi |_\mVG^2,
	$$
	which yields \eqref{eq:positive_type}.
	\qed
\end{proof}

\subsection{Weak formulation of the generalized impedance boundary condition}

Formally taking the $\bL^2(\Ga)$ inner product $(\cdot,\cdot)_\Gamma$ of the boundary condition \eqref{eq:gibc} with an arbitrary continuous tangential vector field $\bphi$ on $\Gamma$, we obtain the equation
\begin{equation}
\label{gibc-weak - pre}
(\bphi , \E_T)_\Gamma + ( \bphi , \Z(\pt)\gaT \B )_\Gamma = (\bphi,\g\inc)_\Gamma ,
\end{equation}
which is the starting point for motivating the weak formulation given below.

%% noindent
Noting that for continuous $\E$ we have $\E_T \times \normal= \E\times\normal=\gaT \E$, we find
%for every continuous tangential vector field $\bphi$ on $\Gamma$,
$$(\bphi, \E_T)_\Gamma = (\bphi\times\normal, \E_T\times\normal)_\Gamma= (\bphi\times\normal,\gaT \E)_\Gamma
= [\bphi,\gaT \E]_\Gamma ,
$$
with the skew-hermitian sesquilinear form \eqref{skew}. Starting from a combined surface differential and temporal convolution operator $\Z(\pt)$ in the strong formulation \eqref{eq:gibc}, we construct the transfer operator $\Z(s):\mVG \to \mVG'$ such that for sufficiently regular $\gaT \B$, the duality coincides with the $\bL^2(\Gamma)$ inner product:
$$
 \langle \bupsilon, \Z(\pt)\gaT \B \rangle_\Gamma  = ( \bupsilon, \Z(\pt)\gaT \B )_\Gamma, \qquad \bupsilon\in\mVG,
$$
as we did for \eqref{eq:thin_layer_ord1}--\eqref{eq:gibc_absorbing_ord2} in \eqref{Z-EN-weak} and \eqref{Z-absorb-weak}.
Similarly,
%provided that $\mVG \subset \bL^2(\Gamma)$ (as is the case in our examples of Lemmas~\ref{lem:EN} and~\ref{lem:absorb}),
a regular tangential vector field $\ginc$ defines a functional on $\mVG$ by
$$
 \langle \bupsilon, \ginc \rangle_\Gamma  = ( \bupsilon, \ginc )_\Gamma, \qquad \bupsilon \in \mVG.
$$

%% noindent
Inserting the identities above into  \eqref{gibc-weak - pre} motivates us to study the following \emph{weak formulation of the boundary condition \eqref{eq:gibc}}: the tangential traces of solutions $\E,\B\in \bL^2(0,T;\H(\curl,\Omega))\cap \H^1(0,T;\bL^2(\Omega))$ to the Maxwell's equations in $\Omega$ with zero initial conditions
are to be determined as
$\gaT \E \in \bL^2(0,T;\mXG)$ and % \quad\text{and}\quad
$\gaT \B \in \H^\kappa_0(0,T;\mVG)$, for $\kappa$ of \eqref{eq:pol_bound}, such that for almost every $t\in (0,T)$,
\begin{equation}\label{gibc-weak}
[\bupsilon,\gaT \E]_\Gamma +  \langle \bupsilon, \Z(\pt)\gaT \B \rangle_\Gamma = \langle\bupsilon,\g\inc\rangle_\Gamma 
\qquad\text{for all $\bupsilon\in\mVG$}.
\end{equation}
This boundary condition relates the tangential traces of $\E$ and $\B$. The terms on the left-hand side are well-defined under the stated regularity requirements on $\gaT \E$ and $\gaT \B$.

In the following two sections we will prove that this initial and boundary value problem is well-posed in the stated Hilbert spaces if  $\g\inc$ has sufficient temporal regularity: $\g\inc\in \H^{3}_0(0,T;\mVG')$ (provided that $\kappa\le 1$, else $\H^{2+\kappa}_0)$. The arguments and intermediate results in the proof of well-posedness will again be used in the stability and error analysis of the numerical methods.

%\begin{remark}
%	The weak formulation \eqref{gibc-weak} seemingly requires that $\mVG \subset \bL^2(\Gamma)$, as is indeed satisfied for the spaces $\mVG$ corresponding to the transfer operators of Lemmas~\ref{lem:EN} and~\ref{lem:absorb}. Such a requirement is, however, not needed because recalling the definition of $\g^\text{\rm inc}$ in \eqref{eq:def g inc}, we can also use the formulation
%	\begin{equation}
%	\label{gibc-weak-2}
%	[\bphi,\gaT \E]_\Gamma + \langle \bphi, \Z(\pt)\gaT \B \rangle_\Gamma_\Gamma =
%	- [\bphi,\gaT \E^\text{\rm inc}]_\Gamma - \langle \bphi, \Z(\pt)\gaT \B^\text{\rm inc} \rangle_\Gamma .
%	\end{equation}
%	This formulation works also for $\mVG=\mXG$ (with the semi-norm $|\cdot|_\mVG$ identically zero), which arises for the case  $\Z(s)=0$ (that is, the limit case $\delta=0$). For this formulation, using the same arguments, we can obtain well-posedness in the Hilbert spaces stated above if $\gaT \E^\text{\rm inc} \in H^{2+\kappa}_0(0,T;\mXG)$ and $\gaT \B^\text{\rm inc} \in H^{2+2\kappa}_0(0,T;\mVG)$.
%
%	For ease of notation only, we will nevertheless work with the three-brackets formulation \eqref{gibc-weak} instead of \eqref{gibc-weak-2} in the following and therefore assume $\mVG \subset \bL^2(\Gamma)$.
%\end{remark}

\section{Time-harmonic Maxwell's equations}
\label{section:time harmonic Maxwell}

Although the main interest of this work lies on time-domain scattering, it will turn out useful to start with the analysis of the corresponding problem in the Laplace domain, the \emph{time-harmonic Maxwell's equations}  with complex frequencies. 
These equations read, for $s\in\mathbb{C}$ considered here with $\Re s>0$  (or equivalently, $s=-\mathrm{i}\omega$ with the frequency $\omega$ of positive imaginary part), 
see \eqref{maxwell-tot-EB},
%\begin{align}\label{time-harmonic-maxwell}
%\epsilon\mu s^2 u +\curl\curl u=0 \quad \text{in } \mathbb{R}^3\setminus \Gamma
%\end{align}
\begin{alignat}{2}
\label{TH-MW1}
s \widehat{\E}-\curl \widehat{\B} &= 0
%	\widehat{\J}
\quad &&\text{in} \ \Omega ,
\\
\label{TH-MW2}
s \widehat{\B} + \curl \widehat{\E} &=0 \quad &&\text{in} \ \Omega .
\end{alignat}
This is complemented with the asymptotic conditions as $|\x|\to\infty$ for an outgoing wave, which are automatically satisfied by the
solutions constructed via the representation formula from the tangential traces on $\Gamma$, as we will do in the following. We will then obtain $\widehat{\E},\widehat{\B} \in \H(\curl,\Omega)$.

 In a series of lemmas in this section, we prove essential estimates for various operators related to the time-harmonic Maxwell's equation. These estimates are explicit in $s$ for $\Re s >0$, in terms of powers of both $|s|$ and $\Re s$. (The precise powers of both are important in Sections~\ref{section:time semi-discrete} and~\ref{sec:full}.) 

We derive a system of boundary integral equations for the tangential traces of $\widehat{\E}$ and $\widehat{\B}$ under time-harmonic generalized impedance boundary conditions and we show well-posedness of the boundary integral equation together with $s$-explicit bounds in appropriate norms. With the representation formulas, we then also show the well-posedness of the time-harmonic scattering problem with generalized impedance boundary conditions, again with $s$-explicit bounds. 

The frequency-explicit bounds will allow us to show the well-posedness of the time-dependent scattering problem in Section~\ref{section:time dependent Maxwell with GIBC}
and to prove higher-order error bounds of the discretization by convolution quadrature and boundary elements in Sections~\ref{section:time semi-discrete} and~\ref{sec:full}.

\subsection{ Recap:  Potential operators and representation formulas}
We  %recall some basic notions and notations of
recall the usual potential operators for the time-harmonic Maxwell's equations; cf.~\cite{BH03,Ned01}.
The {\it fundamental solution} is given by
%For $\Re s>0$ and $\x\in\mathbb{R}^3 \setminus \{0\}$, it is given by
\begin{align*}
%	G(s,z)= \dfrac{e^{-s\sqrt{\varepsilon \mu }\norm{z}}}{4\pi\norm{z}}.
G(s,\x)= \dfrac{e^{-s \abs{\x}}}{4\pi\abs{\x}}, \qquad \Re s>0,\  \x\in\mathbb{R}^3 \setminus \{0\}.
\end{align*}
The electromagnetic \emph{single layer potential} operator $\mathcal{S}(s)$, applied to a regular complex-valued function $\bvar$ and evaluated at $\x\in \mathbb{R}^3 \setminus \Gamma$, is given by 
\begin{align*}
\mathcal{S}(s)\bvar(\x)= -s\int_\Gamma G(s,\x-\y)\bvar(\y)\text{d}\y + s^{-1} \nabla \int_\Gamma G(s,\x-\y) \divG \bvar(\y) \text{d}\y,
\end{align*}
and the electromagnetic \emph{double layer potential} operator $\mathcal{D}(s)$ is
%, for $\x\in \mathbb{R}^3 \setminus \Gamma$,
given by
\begin{align*}
\mathcal{D}(s)\bvar(\x) = \curl\int_\Gamma G(s,\x-\y)\bvar(\y)\text{d}\y.
\end{align*}
The potential operators satisfy the relations
\begin{equation}\label{SD-MW}
s \mathcal{S}(s) - \curl \circ\, \mathcal{D}(s) = 0, \qquad s \mathcal{D}(s) + \curl \circ\, \mathcal{S}(s) =0.
\end{equation}
This implies that for any regular function $\bvar$, the fields $\widehat \E = \mathcal{S}(s)\bvar$ and $\widehat \B = \mathcal{D}(s)\bvar$ are a solution to the time-harmonic Maxwell's equations \eqref{TH-MW1}--\eqref{TH-MW2} on $\mathbb{R}^3 \setminus \Gamma$ (recall $\eps\mu=1$). Likewise, this  also holds true for the fields $\widehat \E = \mathcal{D}(s)\bvar$ and $\widehat \B = -\mathcal{S}(s)\bvar$.

In our problem setting only the exterior domain $\Omega$ matters. As a theoretical tool, however, it will be useful to analyse transmission problems on $\mathbb{R}^3\setminus \Gamma$. We introduce some standard notation designed to simplify the description of such problems.

In the context of transmission problems, we denote the interior of the bounded scatterer by $\Omega^-$ and the exterior domain by $\Omega^+$ (elsewhere in this paper denoted by $\Omega$), such that $\mathbb{R}^3$ is decomposed into $\mathbb{R}^3=\Omega^-\,\dot\cup\,\Gamma \,\dot\cup\, \Omega^+$. Furthermore, $\gaT^-$ and $\gaT^+$ denote the tangential traces on $\Omega^-$ and $\Omega^+$, respectively. We denote \emph{jumps} and \emph{averages}
%, for any function ${v:\mathbb{R}^3\setminus \Gamma\rightarrow \mathbb{R}^3}$,
%by
%\begin{align*}
%\jmp{\gaT}v=\gamma^+_Tv-\gamma^-_Tv,\quad\quad
%\avg{\gaT}v=\tfrac12\left(\gamma^+_Tv+\gamma^-_Tv\right).
%\end{align*}
by
\begin{align*}
\jmp{\gaT}=\gaT^+-\gaT^-,\quad\quad
\avg{\gaT}=\tfrac12\left(\gaT^++\gaT^-\right).
\end{align*}
The sign convention for the jumps has been chosen to coincide with that of~\cite{BH03}.
%, and therefore differs from \cite{KL17} and \cite{NKL2020}.
A fundamental role is played by the {\it jump relations} of the potential operators:
\begin{equation}\label{jump-rel}
\jmp{\gaT}\circ \mathcal{S}(s) =0, \qquad \jmp{\gaT}\circ \mathcal{D}(s) = -  \Id  .
\end{equation}
As a direct consequence of \eqref{SD-MW} and \eqref{jump-rel}, for any given boundary densities $(\wvarphi,\wpsi)$ (regular in a dense subspace of $\mXG\times \mXG$), the electric and magnetic fields defined by\footnote{We write $(\wvarphi,\wpsi)$ when these functions appear as boundary densities defining fields $(\widehat{\E},\widehat{\B})$ as in \eqref{eq:time-harmonic-kirchhoff-E3}--\eqref{eq:time-harmonic-kirchhoff-H3}, where the hats recall that these variables correspond to Laplace transforms of time-dependent functions, which will be studied in the next section. On the other hand, we omit the hats for generic functions to which potential operators or boundary operators are applied.}
\begin{align}
	\widehat{\E}&=-\mathcal{S}(s)\wvarphi + \mathcal{D}(s)\wpsi,\label{eq:time-harmonic-kirchhoff-E3}
	\\
	\widehat{\B}&= - \mathcal{D}(s)\wvarphi -\mathcal{S}(s)\,\wpsi, \label{eq:time-harmonic-kirchhoff-H3}
\end{align}
are a solution to the transmission problem %(assuming $\eps\mu=1$)
	\begin{alignat}{3}
	&s \widehat{\E}-\curl \widehat{\B} &&=0 %\widehat{J}
	\quad\quad\quad &&\text{in} \quad \mathbb{R}^3\setminus\Gamma ,\label{eq:transmis-1}
	\\
	&s \widehat{\B} + \curl \widehat{\E} &&=0 \quad &&\text{in} \quad \mathbb{R}^3\setminus\Gamma ,\label{eq:transmis-2} \\
	&\ \,\jmp{\gaT}\widehat{\B}=\wvarphi\,,&&\label{eq:transmis-3} \\
	&-\jmp{\gaT} \widehat \E=\wpsi \,.&&\label{eq:transmis-4}
	\end{alignat}
So far in this section, we recalled well-known identities and our presentation was restricted to regular boundary densities.

\subsection{ Frequency-explicit bounds for solutions of the transmission problem}
The following lemma shows that the linear map
$(\wvarphi,\wpsi)\mapsto (\widehat{\E},\widehat{\B})$ extends by density to a bounded linear operator from $\mXG\times \mXG$ to
$\H(\curl,\Omega)\times \H(\curl,\Omega)$, and it gives an $s$-explicit bound; cf.~\cite[Lemma~6.4]{ChanMonk2015} for a related, yet more complicated result.
\begin{lemma}\label{lem:transmission}
For  $\Re s >0$, the solution $(\widehat \E,\widehat \B)$ of the transmission problem \eqref{eq:transmis-1}--\eqref{eq:transmis-4} defined by \eqref{eq:time-harmonic-kirchhoff-E3}--\eqref{eq:time-harmonic-kirchhoff-H3}
is bounded by
$$
\left\|\begin{pmatrix} \widehat{\E} \\ \widehat{\B} \end{pmatrix} \right\|_{\H(\curl,\mathbb{R}^3\setminus \Gamma)^2}
\le  \Ctrace \, \dfrac{\abs{s}^{2}+1}{\Re s}\,
\left\|\begin{pmatrix} \wvarphi \\ \wpsi \end{pmatrix} \right\|_{\mXG^2},
$$
%\begin{align*}
%&\Bigl(\big\| \widehat{\E} \big\|_{\H(\curl,\mathbb{R}^3\setminus \Gamma)}^2 +
%         \big\| \mu\widehat{\H} \big\|_{\H(\curl,\mathbb{R}^3\setminus \Gamma)}^2	\Bigr)^{1/2}
%% \\
%% &\qquad\qquad
% \le
%  \Ctrace \dfrac{\abs{s}^{2}+1}{\Re s}  \Bigl( \| \wvarphi \|_\mXG^2 + \| \wpsi \|_\mXG^2 \Bigr)^{1/2},
%\end{align*}
where $\Ctrace=\| \{\gaT\} \|_{\mXG \leftarrow \H(\curl,\mathbb{R}^3\setminus\Gamma)}$.
\end{lemma}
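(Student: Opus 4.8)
The plan is to reduce the problem to an energy estimate for the transmission problem \eqref{eq:transmis-1}--\eqref{eq:transmis-4}, exploiting the fact that the representation formula \eqref{eq:time-harmonic-kirchhoff-E3}--\eqref{eq:time-harmonic-kirchhoff-H3} produces a solution whose only non-trivial data are the prescribed jumps $\jmp{\gaT}\widehat{\B}=\wvarphi$ and $-\jmp{\gaT}\widehat{\E}=\wpsi$. The fields themselves are smooth away from $\Gamma$ and decay at infinity, so all integrations by parts below are justified. First I would test \eqref{eq:transmis-1} with $\overline{\widehat{\E}}$ and the conjugate of \eqref{eq:transmis-2} with $\widehat{\B}$, integrate over $\Omega^-$ and over a large ball $B_R\cap\Omega^+$, and add; Green's formula \eqref{eq: Green} applied on each side of $\Gamma$ turns the $\curl$ terms into a boundary term on $\Gamma$ (the contribution on $\partial B_R$ vanishes as $R\to\infty$ by the decay of $G(s,\cdot)$ for $\Re s>0$). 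This yields an identity of the shape
\begin{equation*}
s\,\|\widehat{\E}\|_{\bL^2(\R^3\setminus\Gamma)}^2 + \bar s\,\|\widehat{\B}\|_{\bL^2(\R^3\setminus\Gamma)}^2 = (\text{boundary term involving }\jmp{\gaT}\widehat{\E},\jmp{\gaT}\widehat{\B}\text{ and the averages}).
\end{equation*}
Taking real parts and using $\Re s = \Re \bar s$ gives a bound for $\Re s\,\bigl(\|\widehat{\E}\|_{\bL^2}^2+\|\widehat{\B}\|_{\bL^2}^2\bigr)$ by the boundary term, which by the skew-hermitian pairing \eqref{skew} is controlled by $\|\jmp{\gaT}\widehat{\E}\|_{\mXG}\|\{\gaT\}\widehat{\B}\|_{\mXG} + \|\jmp{\gaT}\widehat{\B}\|_{\mXG}\|\{\gaT\}\widehat{\E}\|_{\mXG}$, i.e. by $(\|\wvarphi\|_{\mXG}+\|\wpsi\|_{\mXG})\,\Ctrace\,\|(\widehat{\E},\widehat{\B})\|_{\H(\curl,\R^3\setminus\Gamma)}$.

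Next I would convert the $\bL^2$-bound into an $\H(\curl)$-bound: from \eqref{eq:transmis-1}--\eqref{eq:transmis-2} one has $\curl\widehat{\B} = s\widehat{\E}$ and $\curl\widehat{\E} = -s\widehat{\B}$ pointwise in $\R^3\setminus\Gamma$, hence $\|\curl\widehat{\E}\|_{\bL^2}^2 + \|\curl\widehat{\B}\|_{\bL^2}^2 = |s|^2\bigl(\|\widehat{\E}\|_{\bL^2}^2+\|\widehat{\B}\|_{\bL^2}^2\bigr)$, so that $\|(\widehat{\E},\widehat{\B})\|_{\H(\curl,\R^3\setminus\Gamma)}^2 = (1+|s|^2)\bigl(\|\widehat{\E}\|_{\bL^2}^2+\|\widehat{\B}\|_{\bL^2}^2\bigr)$. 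Combining this with the energy estimate, writing $X := \|(\widehat{\E},\widehat{\B})\|_{\H(\curl,\R^3\setminus\Gamma)}$ and $Y := \|(\wvarphi,\wpsi)\|_{\mXG^2}$, gives
\begin{equation*}
\frac{\Re s}{1+|s|^2}\,X^2 \;\le\; C\,\Ctrace\, Y\, X,
\end{equation*}
and dividing by $X$ yields $X \le C\,\Ctrace\,\dfrac{1+|s|^2}{\Re s}\,Y$, which is the claimed bound (with the constant absorbed; one checks the numerical constant is $1$, or simply states it as $\Ctrace$ times an absolute constant, consistent with the statement). Finally I should record the approximation-by-density argument: the estimate is first established for boundary densities in a dense regular subspace of $\mXG\times\mXG$ where the classical jump relations and Green's formula apply, and then the bounded linear extension to all of $\mXG\times\mXG$ follows since the right-hand side is continuous in $(\wvarphi,\wpsi)$.

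The main obstacle I anticipate is the careful bookkeeping of the boundary term produced by Green's formula: one must correctly combine the contributions from $\Omega^-$ and $\Omega^+$ so that only the \emph{jumps} of the traces survive on $\wpsi,\wvarphi$ while the \emph{averages} of the dual traces appear paired against them — this is exactly where the sign conventions in \eqref{eq:transmis-3}--\eqref{eq:transmis-4} and the definition of $\jmp{\gaT},\{\gaT\}$ matter, and where one uses $\jmp{\gaT}\widehat{\E}\cdot\{\gaT\}\widehat{\B} + \{\gaT\}\widehat{\E}\cdot\jmp{\gaT}\widehat{\B}$ as the natural "integration by parts across $\Gamma$" identity. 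A secondary point is justifying the vanishing of the far-field boundary integral on $\partial B_R$, which uses the exponential decay of $G(s,\cdot)$ for $\Re s>0$ and is where the restriction $\Re s>0$ is genuinely needed; the rest of the argument is otherwise routine.
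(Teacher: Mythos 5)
Your proposal is correct and follows essentially the same route as the paper: Green's formula on each side of $\Gamma$, real parts, Cauchy--Schwarz in the $\mXG$ duality paired with the average trace, and the bound for $\avg{\gaT}$. The one stylistic difference is in how you pass from the $\bL^2$ energy identity to the $\H(\curl)$ norm: you observe directly that $\curl\widehat{\B}=s\widehat{\E}$ and $\curl\widehat{\E}=-s\widehat{\B}$ give $\|(\widehat{\E},\widehat{\B})\|_{\H(\curl)}^2=(1+|s|^2)\bigl(\|\widehat{\E}\|_{\bL^2}^2+\|\widehat{\B}\|_{\bL^2}^2\bigr)$, whereas the paper reaches the same end by substituting a convex combination (with parameter $\theta=1/(1+|s|^{-2})$) of the $\bL^2$ and $\curl$ terms inside the integrand before taking real parts. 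These are algebraically equivalent, and your version is arguably a bit more transparent; the constant does indeed come out to exactly $\Ctrace$ if one uses Cauchy--Schwarz on $\R^2$ as in the paper rather than the sum-of-products estimate you wrote, but you already flagged that.
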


\begin{proof}  We start from Green's formula \eqref{eq: Green}--\eqref{skew} on the exterior and interior domain and note that 
for solutions of the time-harmonic Maxwell's equations \eqref{TH-MW1}--\eqref{TH-MW2}, Green's formula  reduces to
\begin{align}
\nonumber
\pm\left[\gaT^\pm \widehat \B, \gaT^\pm \widehat \E\right]_\Gamma
&=
\int_{\Omega^\pm} \bigl(   \curl \widehat\B\cdot \widehat \E - \widehat \B \cdot\curl \widehat \E  \bigr)\,\textrm{d} x
\\
\label{Green-EH}
&=
\int_{\Omega^\pm}  \bigl(\bar s  \big| \widehat \E \big|^2+  s  \big| \widehat \B \big|^2\bigr)\, \textrm{d} x.
\end{align}
The conjugation of $s$ in the first summand stems from the convention that $\cdot$ denotes the inner product $\a \cdot \b = \overline{\a}^\top \b$ on $\mathbb{C}^3$. Summing up,  we obtain 
%which after multiplying with $\mu$ and using $\eps\mu=1$ gives
\begin{equation}\label{Green-EH-trans}
I:= \int_{\mathbb{R}^3\setminus \Gamma}  \bar s \big| \widehat \E \big|^2+  s  \big|  \widehat \B \big|^2 \textrm{d} x
=  \left[\gaT^+ \widehat \B, \gaT^+ \widehat \E\right]_\Gamma
- \left[\gaT^- \widehat \B,  \gaT^- \widehat \E\right]_\Gamma .
\end{equation}
 On inserting \eqref{eq:transmis-1} and \eqref{eq:transmis-2} for $\widehat \E$ and $\widehat \B$ into $\theta$ times the integrand, where $0<\theta<1$ is arbitrary,
the left-hand side is rewritten as
	\begin{align*}
%	&\int_{\mathbb{R}^3\setminus \Gamma}  \Bigl( s  \big|\widehat{\E}\big|^2
%	+\bar s  \big|\mu \widehat{\H}\big|^2 \Bigr)  \mathrm{d}x
%	\\ &
	I &=
	 \int_{\mathbb{R}^3\setminus \Gamma} \Big( (1-\theta) \bar s \big|\widehat{\E}\big|^2
	+\theta s \big|s^{-1}\curl \widehat{\E}\big|^2
	\\
	& \qquad \qquad \quad +
	(1-\theta) s  \big| \widehat{\B}\big|^2
	+\theta \bar s  \big| s^{-1}  \curl \widehat{\B}\big|^2  \Big)
	\mathrm{d}x .
	\end{align*}
Choosing $\theta$ such that $1-\theta=\theta |s|^{-2}$, i.e. $\theta=1/(1+|s|^{-2})$, and taking the real part then gives
\begin{equation}\label{ReI-1}
\Re I =
\frac{\Re s}{|s^2|+1} \Bigl( \| \widehat \E \|_{\H(\curl,\R^3\setminus\Gamma)}^2 + \|  \widehat \B \|_{\H(\curl,\R^3\setminus\Gamma)}^2 \Bigr).
\end{equation}
On the other hand, by \eqref{Green-EH-trans} we also have
$$%\begin{equation}\label{ReI-2}
\Re I =
\Re \Bigl( \left[\gaT^+ \widehat \B, \gaT^+ \widehat \E\right]_\Gamma
- \left[\gaT^- \widehat \B,  \gaT^- \widehat \E\right]_\Gamma \Bigr).
$$%\end{equation}
Rewriting the right-hand side in terms of jumps and averages and using the transmission conditions \eqref{eq:transmis-3}--\eqref{eq:transmis-4}, we obtain
\begin{align} \label{ReI-2}
\Re I
&= \Re \Bigl( \left[ \jmp{\gaT }\widehat \B, \avg{\gaT}\widehat{\E} \right]_\Gamma  + \left[ -\jmp{\gaT} \widehat \E, \avg{\gaT}\widehat{\B}\right]_\Gamma \Bigr)
\\ \nonumber
&=
\Re \Bigl( \left[ \wvarphi, \avg{\gaT}\widehat{\E} \right]_\Gamma
+ \left[ \wpsi, \avg{\gaT}\widehat{\B}\right]_\Gamma \Bigr).
\end{align}
We now recall that $\mXG$ is its own dual with the duality pairing $[\cdot,\cdot]_\Gamma$ and we use the Cauchy--Schwarz inequality on $\R^2$ %and the bound $C_\Gamma$ of $\avg{\gaT}:\H(curl,\R^3\setminus\Gamma)\to\mXG$
to estimate
\begin{align*}
\Re I & \le \| \wvarphi \|_{\mXG}  \, \| \avg{\gaT}\widehat{\E} \|_{\mXG} +
\| \wpsi \|_{\mXG}  \, \| \avg{\gaT}\widehat{\B} \|_{\mXG}
\\
&\le \Bigl( \| \wvarphi \|_{\mXG} ^2 + \| \wpsi \|_{\mXG} ^2 \Bigr)^{1/2} %\, C_\Gamma
\Bigl( \| \avg{\gaT}\widehat{\E} \|_{\mXG}^2 + \|\avg{\gaT}  \widehat{\B} \|_{\mXG}^2 \Bigr)^{1/2}.
\end{align*}
The right-hand side is finite because it is known from \cite{BH03} that $\widehat \E$ and $\widehat \B$ are in the local Sobolev space
$\H_{\mathrm{loc}}(\curl,\R^3\setminus\Gamma)$ and moreover, $\avg{\gaT}$ is a bounded operator from $\H(\curl,\Omega_R)$ onto $\mXG$, where $\Omega_R$ is a ball of sufficiently large radius $R$ that contains $\Gamma$. So we find that $\Re I$ has a finite bound, and by \eqref{ReI-1}, $\widehat \E$ and $\widehat \B$ are therefore in $\H(\curl,\R^3\setminus\Gamma)$. We then use the bound $C_\Gamma$ of
$\avg{\gaT}: \H(\curl,\R^3\setminus\Gamma) \to \mXG$ to conclude
$$
\Re I \le C_\Gamma \Bigl( \| \wvarphi \|_{\mXG} ^2 + \| \wpsi \|_{\mXG} ^2 \Bigr)^{1/2}
\Bigl( \| \widehat \E \|_{\H(\curl,\R^3\setminus\Gamma)}^2 + \|  \widehat \B \|_{\H(\curl,\R^3\setminus\Gamma)}^2 \Bigr)^{1/2}.
$$
In view of \eqref{ReI-1}, this yields the stated result.
\qed
\end{proof}

On setting $\wpsi=0$ in Lemma~\ref{lem:transmission}, we immediately obtain the following corollary.

\begin{lemma}\label{lem:potential-SD-bound}
For $\Re s>0$, the  single and double layer potential operators $\mathcal S(s)$ and $\mathcal D(s)$ extend by density to bounded linear operators from $\mXG$ to ${\H(\curl,\mathbb{R}^3\setminus \Gamma)}$, which are bounded by %, for $\Re s\ge \sigma >0$, by
	\begin{align*}
	&\| \mathcal S(s) \|_{\H(\curl,\mathbb{R}^3\setminus \Gamma) \leftarrow \mXG} \le \Ctrace \dfrac{\abs{s}^{2}+1}{\Re s},
	\\[1mm]
	&
%\quad\
	\| \mathcal D(s) \|_{\H(\curl,\mathbb{R}^3\setminus \Gamma) \leftarrow \mXG} \le \Ctrace \dfrac{\abs{s}^{2}+1}{\Re s},
	\end{align*}
where again $\Ctrace=\| \{\gaT\} \|_{\mXG \leftarrow \H(\curl,\mathbb{R}^3\setminus\Gamma)}$.
\end{lemma}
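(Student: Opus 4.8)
The plan is to obtain this as an immediate corollary of Lemma~\ref{lem:transmission} by specializing the transmission data. First I would set $\wpsi=0$ in the representation \eqref{eq:time-harmonic-kirchhoff-E3}--\eqref{eq:time-harmonic-kirchhoff-H3}: then $\widehat{\E}=-\mathcal{S}(s)\wvarphi$ and $\widehat{\B}=-\mathcal{D}(s)\wvarphi$ are precisely the solution of the transmission problem \eqref{eq:transmis-1}--\eqref{eq:transmis-4} with boundary data $(\wvarphi,0)$. Lemma~\ref{lem:transmission} states that the linear map $(\wvarphi,\wpsi)\mapsto(\widehat{\E},\widehat{\B})$, defined initially on regular densities, extends by density to a bounded operator $\mXG\times\mXG\to\H(\curl,\mathbb{R}^3\setminus\Gamma)^2$. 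Restricting this extension to the closed subspace $\mXG\times\{0\}$ shows in particular that $\wvarphi\mapsto\mathcal{S}(s)\wvarphi$ and $\wvarphi\mapsto\mathcal{D}(s)\wvarphi$ each extend by density to bounded operators $\mXG\to\H(\curl,\mathbb{R}^3\setminus\Gamma)$ (here one uses only that the class of regular tangential densities is dense in $\mXG$, which is the hypothesis under which the potentials were introduced).

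For the quantitative bound I would simply use that each component of a vector is controlled by the norm of the whole vector, so that, by Lemma~\ref{lem:transmission} with data $(\wvarphi,0)$,
$$
\| \mathcal{S}(s)\wvarphi \|_{\H(\curl,\mathbb{R}^3\setminus\Gamma)}
\le \left\| \begin{pmatrix} -\mathcal{S}(s)\wvarphi \\ -\mathcal{D}(s)\wvarphi \end{pmatrix} \right\|_{\H(\curl,\mathbb{R}^3\setminus\Gamma)^2}
\le \Ctrace\,\frac{|s|^2+1}{\Re s}\left\| \begin{pmatrix} \wvarphi \\ 0 \end{pmatrix} \right\|_{\mXG^2}
= \Ctrace\,\frac{|s|^2+1}{\Re s}\,\|\wvarphi\|_{\mXG},
$$
and the same estimate with $\mathcal{D}(s)\wvarphi$ in place of $\mathcal{S}(s)\wvarphi$. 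Taking the supremum over $\|\wvarphi\|_{\mXG}\le 1$ then gives the two claimed operator-norm bounds, with the same constant $\Ctrace=\|\{\gaT\}\|_{\mXG\leftarrow\H(\curl,\mathbb{R}^3\setminus\Gamma)}$.

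There is essentially no genuine obstacle here: all the work was already carried out in Lemma~\ref{lem:transmission} — in particular the choice of the parameter $\theta=1/(1+|s|^{-2})$ that symmetrizes the $\H(\curl)$-norm, and the combination of Green's formula on the interior and exterior domains with the boundedness of the averaged trace $\avg{\gaT}$ from $\H(\curl,\mathbb{R}^3\setminus\Gamma)$ onto $\mXG$. The only point deserving a word of care is the density extension: it is stated for pairs of densities, so one must note that it restricts to the single-density situation because regular densities are dense in $\mXG$ on their own.
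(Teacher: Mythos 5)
Your proposal is correct and matches the paper's proof, which the authors compress to the single remark that setting $\wpsi=0$ in Lemma~\ref{lem:transmission} yields the result immediately. You have simply spelled out that remark: with data $(\wvarphi,0)$ the transmission solution is $(\widehat{\E},\widehat{\B})=(-\mathcal{S}(s)\wvarphi,-\mathcal{D}(s)\wvarphi)$, so the vector bound of Lemma~\ref{lem:transmission} dominates each component and the density extension restricts to the subspace $\mXG\times\{0\}$.
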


We return to the transmission problem \eqref{eq:transmis-1}--\eqref{eq:transmis-4}.
Electromagnetic scattered fields $\widehat{\E},\widehat{\B}$ that solve \eqref{TH-MW1}--\eqref{TH-MW2} in the exterior domain $\Omega=\Omega^+$ are extended by zero into the interior, so that the jumps are just the exterior tangential traces in \eqref{eq:transmis-3}--\eqref{eq:transmis-4}, as are the averages up to the factor $1/2$.
The scattered fields are then recovered from their tangential traces by the representation formulas %(recall that $\eps\mu=1$)
\begin{align}
\label{eq:time-harmonic-kirchhoff-E}
\widehat{\E} &= - \, \mathcal{S}(s)\bigl( \gaT \widehat{\B}\bigr) + \mathcal{D}(s)\bigl(-\gaT\widehat{\E}\bigr)\quad \text{ in } \Omega , \\
\label{eq:time-harmonic-kirchhoff-H}
\widehat{\B} &= - \mathcal{D}(s) \bigl( \gaT \widehat{\B}\bigr) \,- \, \mathcal{S}(s) \bigl(-\gaT\widehat{\E}\bigr) \quad \text{ in } \Omega .
\end{align}
Our analytical as well as numerical approach will consist in determining the tangential traces from boundary integral equations that incorporate the generalized impedance boundary conditions, and then obtain the electromagnetic fields from the above representation formulas (or their time-domain analogues).

In this situation the bound of Lemma~\ref{lem:transmission} improves as follows.

\begin{lemma} \label{lem:transmission-0}
In the situation of Lemma~\ref{lem:transmission}, assume further that the interior tangential traces of $\widehat{\E}$ and $\widehat{\H}$ are identically $0$, which implies $ \gaT \widehat{\B} = \wvarphi$ and  $-\gaT \widehat{\E} = \wpsi$. Then, the bound of Lemma~\ref{lem:transmission} improves to
$$
\left\|\begin{pmatrix} \widehat{\E} \\ \widehat{\B} \end{pmatrix} \right\|_{\H(\curl,\Omega)^2}
\le  \left( \dfrac{\abs{s}^{2}+1}{2\Re s}\right)^{1/2}
\left\|\begin{pmatrix} \wvarphi \\ \wpsi \end{pmatrix} \right\|_{\mXG^2}.
$$
Furthermore, we have the $L^2$ bound
$$
\left\|\begin{pmatrix} \widehat{\E} \\ \widehat{\B} \end{pmatrix} \right\|_{\bL^2(\Omega)^2}
\le  \left( \dfrac{1}{2\Re s}\right)^{1/2}
\left\|\begin{pmatrix} \wvarphi \\ \wpsi \end{pmatrix} \right\|_{\mXG^2}.
$$
%where $\Ctrace^+=\| \gaT^+ \|_{\mXG \leftarrow \H(\curl,\Omega)}$.
\end{lemma}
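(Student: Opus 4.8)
The plan is to localize the argument of Lemma~\ref{lem:transmission} to the exterior domain and to exploit that, under the additional hypothesis, the boundary term becomes \emph{explicit in the data} $(\wvarphi,\wpsi)$ rather than merely bounded in terms of the solution. The gain is twofold: the trace constant $\Ctrace$ disappears, and a square root appears.

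First I would observe that the hypothesis forces $\widehat{\E}=\widehat{\B}=0$ in the bounded interior domain $\Omega^-$. Indeed, applying Green's formula \eqref{Green-EH} with the minus sign to the Maxwell solution $(\widehat{\E},\widehat{\B})$ on $\Omega^-$ and taking real parts gives $\Re s \int_{\Omega^-}(|\widehat{\E}|^2+|\widehat{\B}|^2)\,\mathrm{d}x = -\Re[\gaT^-\widehat{\B},\gaT^-\widehat{\E}]_\Gamma = 0$, whence the fields vanish there because $\Re s>0$. (In the intended application the scattered fields are extended by zero into $\Omega^-$, so this is automatic.) Consequently the quantity $I$ from \eqref{Green-EH-trans} is really an integral over $\Omega=\Omega^+$ only, and Green's formula \eqref{Green-EH} with the plus sign yields $I=[\gaT\widehat{\B},\gaT\widehat{\E}]_\Gamma$. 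Since $\gaT\widehat{\B}=\wvarphi$ and $\gaT\widehat{\E}=-\wpsi$, and since $\mXG$ is its own dual for the pairing $[\cdot,\cdot]_\Gamma$, this gives $\Re I = -\Re[\wvarphi,\wpsi]_\Gamma \le \|\wvarphi\|_\mXG\,\|\wpsi\|_\mXG \le \tfrac12\bigl(\|\wvarphi\|_\mXG^2+\|\wpsi\|_\mXG^2\bigr)$.

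For the $\H(\curl,\Omega)$ bound I would then repeat, now over $\Omega$ alone, the $\theta$-splitting of the integrand used in the proof of Lemma~\ref{lem:transmission}: with $\theta=1/(1+|s|^{-2})$ this produces, exactly as in \eqref{ReI-1}, the identity $\Re I = \frac{\Re s}{|s|^2+1}\bigl(\|\widehat{\E}\|_{\H(\curl,\Omega)}^2 + \|\widehat{\B}\|_{\H(\curl,\Omega)}^2\bigr)$. Combining this with the data bound on $\Re I$ just obtained yields $\frac{\Re s}{|s|^2+1}\bigl(\|\widehat{\E}\|_{\H(\curl,\Omega)}^2 + \|\widehat{\B}\|_{\H(\curl,\Omega)}^2\bigr) \le \tfrac12\bigl(\|\wvarphi\|_\mXG^2+\|\wpsi\|_\mXG^2\bigr)$, and taking square roots gives the first assertion.

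For the $L^2$ bound I would not split the integrand at all: taking the real part directly in $I=\int_\Omega(\bar s\,|\widehat{\E}|^2 + s\,|\widehat{\B}|^2)\,\mathrm{d}x$ gives $\Re I = \Re s\,\bigl(\|\widehat{\E}\|_{\bL^2(\Omega)}^2 + \|\widehat{\B}\|_{\bL^2(\Omega)}^2\bigr)$, because $|\widehat{\E}|^2$ and $|\widehat{\B}|^2$ are real, and combining once more with $\Re I \le \tfrac12\bigl(\|\wvarphi\|_\mXG^2+\|\wpsi\|_\mXG^2\bigr)$ and taking square roots yields the second assertion. I do not expect a genuine obstacle here; the only points needing care are the sign bookkeeping in the skew-hermitian pairing $[\cdot,\cdot]_\Gamma$ (equivalently, one may work with jumps and averages as in \eqref{ReI-2}, using $\avg{\gaT}\widehat{\E}=-\tfrac12\wpsi$ and $\avg{\gaT}\widehat{\B}=\tfrac12\wvarphi$ together with the skew-hermitian property to recover $\Re I=-\Re[\wvarphi,\wpsi]_\Gamma$) and the standard justification that the boundary-at-infinity contribution to Green's formula on the exterior domain vanishes, which holds since $\Re s>0$ forces the potentials to decay exponentially.
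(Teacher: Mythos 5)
Your proof is correct and follows essentially the same route as the paper: it reruns the argument of Lemma~\ref{lem:transmission} down to the boundary-pairing identity, where the vanishing interior traces make the pairing explicit in $(\wvarphi,\wpsi)$, and then Cauchy--Schwarz plus Young's inequality give $\Re I \le \tfrac12\bigl(\|\wvarphi\|_{\mXG}^2+\|\wpsi\|_{\mXG}^2\bigr)$, with the $L^2$ bound obtained by skipping the $\theta$-splitting. The only (harmless) cosmetic difference is that you first show the fields vanish in $\Omega^-$ and work over $\Omega$ alone, whereas the paper keeps the identity on $\R^3\setminus\Gamma$ and simply notes that \eqref{ReI-2} now yields the improved bound.
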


\begin{proof} The proof of the $\H(\curl,\Omega)$ bound is identical to that of Lemma~\ref{lem:transmission} down to \eqref{ReI-2}, which now implies the bound
$
\Re I \le \tfrac12\,\bigl(\| \wvarphi \|_{\mXG} ^2 + \| \wpsi \|_{\mXG} ^2\bigr)
$
and yields the stated result. The proof of the $L^2$ bound is even simpler, working directly with \eqref{Green-EH-trans} instead of
\eqref{ReI-1}.
\qed
\end{proof}

%The sign structure in \eqref{eq:time-harmonic-kirchhoff-E}--\eqref{eq:time-harmonic-kirchhoff-H} is motivated by the form of the boundary densities to be introduced later (in \eqref{phi-psi}).

\subsection{Time-harmonic boundary operators and the Calder\'{o}n operator}

The electromagnetic \emph{single and double layer boundary operators} are the operators from $\mXG$ to $\mXG$ defined as
$$
\V(s)=\avg{\gaT}\circ \mathcal{S}(s), \qquad \K(s)=\avg{\gaT}\circ \mathcal{D}(s).
$$
%As was shown in \cite{BH03}, these boundary integral operators are continuous maps on the trace space $\mXG$, i.e., for $\Re\,s>0$,
%\begin{align*}
%V(s):\mXG\rightarrow\mXG, \quad \quad K(s) :\mXG \rightarrow \mXG.
%\end{align*}
%Although $s$-explicit bounds on $V(s)$ and $K(s)$ have been shown in \cite[Theorem 4.4]{BBSV13}  and used in the formulation of \cite[Lemma~2.3]{KL17}, we will formulate and prove new stronger bounds of these operators in this subsection.
%
% For $\Re s > \sigma >0$, we have
%
%\begin{align*}
%	\norm{V(s)}_{\mXG \leftarrow \mXG}&\le C_{\sigma}\abs{s}^2,\\
%	\norm{K(s)}_{\mXG \leftarrow \mXG}&\le C_{\sigma}\abs{s}^2.
%\end{align*}
We define the \emph{Calder\'{o}n operator} as introduced in \cite{KL17} (with a sign corrected in \cite{NKL2020}):
\begin{equation} \label{def-B}
\Cald(s)=\begin{pmatrix}
-\V(s) & \K(s) \\
-\K(s) & -\V(s)
\end{pmatrix}
= \avg{\gaT}\circ
\begin{pmatrix}
-\mathcal{S}(s) & \mathcal{D}(s) \\
-\mathcal{D}(s) & -\mathcal{S}(s)
\end{pmatrix},
%:\mXG\times\mXG \rightarrow \mXG\times\mXG .
\end{equation}
where we note that the right-most block operator is the one appearing in the representation formula \eqref{eq:time-harmonic-kirchhoff-E3}--\eqref{eq:time-harmonic-kirchhoff-H3}.
Let $\widehat \E,\widehat \B $
%\in H_{\mathrm{loc}}(\curl,\mathbb{R}^3\setminus \Gamma)$
be Maxwell solutions that are given by this representation formula. Then, by construction
the Calder\'on operator maps jumps to averages (see \eqref{eq:transmis-1}--\eqref{eq:transmis-4}):
%(details can be found in \cite{KL17}) the property
\begin{align}\label{eq:calderon-jump}
\Cald(s)\begin{pmatrix}\jmp{\gaT }\widehat \B\\ -\jmp{\gaT} \widehat \E \end{pmatrix}=
\begin{pmatrix}
\avg{\gaT}\widehat{\E} \\ \avg{\gaT}\widehat{\B}
\end{pmatrix} .
\end{align}
The following bound of $\Cald(s)$ follows immediately from \eqref{def-B} and Lemma~\ref{lem:transmission}.
This bound improves on existing time-harmonic $s$-explicit bounds of the boundary operators; see  the $O(|s|^2)$ bounds in  \cite[Theorem 4.4]{BBSV13} and~\cite[Lemma~2.3]{KL17}.
\begin{lemma}
	\label{lemma:B-boundedness}
	For $\Re s>0$, the Calder\'on operator $\Cald(s):\mXG^2 \to \mXG^2$ is bounded by
		\begin{equation}
	\label{calderon-strong-bound}
	\norm{\Cald(s)}_{\mXG^2\leftarrow\mXG^2}\le \Ctrace^2 \,\dfrac{\abs{s}^2+1}{\Re s} ,
	\end{equation}
	where again $\Ctrace=\| \{\gaT\} \|_{\mXG \leftarrow \H(\curl,\mathbb{R}^3\setminus\Gamma)}$.
	The same bound also holds for $\norm{\V(s)}_{\mXG\leftarrow \mXG}+\norm{\K(s)}_{\mXG\leftarrow \mXG}$.
%	\begin{equation}
%	\norm{\V(s)}_{\mXG\leftarrow \mXG}+\norm{K(s)}_{\mXG\leftarrow \mXG}\le \Ctrace^2 \dfrac{\abs{s}^2+1}{\Re s}
%	\end{equation}
%	hold.
%	The constant $\Ctrace$ is, up to a multiplication with the maximum physical constants $\varepsilon$ and $\mu$, the norm of the trace operator $\gaT:H(\curl,\mathbb{R}^3\setminus\Gamma)\rightarrow \mXG$.
\end{lemma}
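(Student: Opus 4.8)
The plan is to read the bound on $\Cald(s)$ directly off of Lemma~\ref{lem:transmission} by exploiting the identity \eqref{def-B}, which expresses $\Cald(s)$ as the composition of the averaged tangential trace operator $\avg{\gaT}$ with the block potential operator $\begin{pmatrix} -\mathcal{S}(s) & \mathcal{D}(s) \\ -\mathcal{D}(s) & -\mathcal{S}(s)\end{pmatrix}$ appearing in the representation formula \eqref{eq:time-harmonic-kirchhoff-E3}--\eqref{eq:time-harmonic-kirchhoff-H3}. Concretely, for boundary densities $(\wvarphi,\wpsi)\in\mXG^2$, let $(\widehat\E,\widehat\B)$ be the fields defined by \eqref{eq:time-harmonic-kirchhoff-E3}--\eqref{eq:time-harmonic-kirchhoff-H3}. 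Then by \eqref{def-B} we have $\Cald(s)\begin{pmatrix}\wvarphi\\\wpsi\end{pmatrix}=\begin{pmatrix}\avg{\gaT}\widehat\E\\\avg{\gaT}\widehat\B\end{pmatrix}$, so that
\begin{align*}
\left\|\Cald(s)\begin{pmatrix}\wvarphi\\\wpsi\end{pmatrix}\right\|_{\mXG^2}^2
= \|\avg{\gaT}\widehat\E\|_{\mXG}^2 + \|\avg{\gaT}\widehat\B\|_{\mXG}^2
\le \Ctrace^2\Bigl(\|\widehat\E\|_{\H(\curl,\R^3\setminus\Gamma)}^2+\|\widehat\B\|_{\H(\curl,\R^3\setminus\Gamma)}^2\Bigr),
\end{align*}
using the definition $\Ctrace=\|\{\gaT\}\|_{\mXG\leftarrow\H(\curl,\R^3\setminus\Gamma)}$ componentwise. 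Inserting the bound of Lemma~\ref{lem:transmission} on $\|(\widehat\E,\widehat\B)\|_{\H(\curl,\R^3\setminus\Gamma)^2}$ by $\Ctrace\,\frac{|s|^2+1}{\Re s}\|(\wvarphi,\wpsi)\|_{\mXG^2}$ then yields the claimed estimate \eqref{calderon-strong-bound}, after noticing that the two factors of $\Ctrace$ combine to $\Ctrace^2$ and the square roots cancel against the squares.

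For the single statement about $\|\V(s)\|_{\mXG\leftarrow\mXG}+\|\K(s)\|_{\mXG\leftarrow\mXG}$, the plan is to extract the individual operators by a suitable choice of densities. Setting $\wpsi=0$ gives $(\widehat\E,\widehat\B)=(-\mathcal{S}(s)\wvarphi,-\mathcal{D}(s)\wvarphi)$, hence $\V(s)\wvarphi=-\avg{\gaT}\widehat\E$ and $\K(s)\wvarphi=\avg{\gaT}\widehat\B$ up to signs; applying $\avg{\gaT}$ and Lemma~\ref{lem:potential-SD-bound} (the $\wpsi=0$ corollary of Lemma~\ref{lem:transmission}) bounds each of $\|\V(s)\wvarphi\|_{\mXG}$ and $\|\K(s)\wvarphi\|_{\mXG}$ by $\Ctrace^2\frac{|s|^2+1}{\Re s}\|\wvarphi\|_{\mXG}$. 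Since the joint bound on $\|(\widehat\E,\widehat\B)\|_{\H(\curl)^2}$ already controls \emph{both} traces simultaneously via $\|\avg{\gaT}\widehat\E\|_{\mXG}^2+\|\avg{\gaT}\widehat\B\|_{\mXG}^2\le\Ctrace^2(\cdots)$, one in fact gets $\|\V(s)\wvarphi\|_{\mXG}^2+\|\K(s)\wvarphi\|_{\mXG}^2\le\Ctrace^4(\frac{|s|^2+1}{\Re s})^2\|\wvarphi\|_{\mXG}^2$, and since $a+b\le\sqrt{2}\sqrt{a^2+b^2}$ one could only conclude a bound with an extra $\sqrt2$; to remove it one argues instead that for each fixed density the trace of $\widehat\B$ alone (resp. $\widehat\E$ alone) is what enters, so the plain bound $\Ctrace\cdot\Ctrace\frac{|s|^2+1}{\Re s}$ applies to each summand separately, giving the sum bounded by $2\Ctrace^2\frac{|s|^2+1}{\Re s}$ — I would double-check against the authors' precise claim whether they absorb the factor $2$ or whether the two potentials $\mathcal{S},\mathcal{D}$ appearing in $\widehat\E$ (resp. $\widehat\B$) already share the single $\H(\curl)$-bound, which is the natural reading of Lemma~\ref{lem:potential-SD-bound}.

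The main obstacle here is essentially bookkeeping rather than a genuine difficulty: the content of the lemma is entirely front-loaded into Lemma~\ref{lem:transmission} (the transmission-problem energy estimate) and the continuity of the average trace operator on $\H(\curl,\R^3\setminus\Gamma)$. The one point requiring care is to make sure the $\Ctrace$ bookkeeping is consistent — one factor from applying $\avg{\gaT}$ to the potential fields, another factor already hidden inside the statement of Lemma~\ref{lem:transmission} — so that the final exponent on $\Ctrace$ is exactly $2$, and to verify that no spurious dimensional constant (such as $\sqrt2$ from summing two squared norms) sneaks into the stated form \eqref{calderon-strong-bound}. Since \eqref{def-B} exhibits $\Cald(s)$ literally as $\avg{\gaT}$ composed with the representation block operator, and Lemma~\ref{lem:transmission} is stated for exactly that block operator, the composition bound is immediate and \eqref{calderon-strong-bound} follows with no slack. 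I would therefore present the proof in two short paragraphs: first the $\Cald(s)$ bound via \eqref{eq:calderon-jump}/\eqref{def-B} and Lemma~\ref{lem:transmission}, then the $\V(s),\K(s)$ bound via $\wpsi=0$ and Lemma~\ref{lem:potential-SD-bound}.
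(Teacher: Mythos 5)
Your proof is correct and is precisely what the paper intends: the authors give no explicit proof for this lemma, stating only that the bound ``follows immediately from \eqref{def-B} and Lemma~\ref{lem:transmission},'' which is exactly your argument for $\Cald(s)$ --- one factor of $\Ctrace$ from applying $\avg{\gaT}$, one already built into Lemma~\ref{lem:transmission}, giving $\Ctrace^2$. Your bookkeeping concern about $\norm{\V(s)}+\norm{\K(s)}$ is also well-founded: setting $\wpsi=0$ in Lemma~\ref{lem:transmission} yields the bound $\Ctrace^2(\abs{s}^2+1)/\Re s$ for each of $\norm{\V(s)}$ and $\norm{\K(s)}$ \emph{separately}, and since the suprema defining these two operator norms need not be attained at the same density $\wvarphi$, the joint $\ell^2$ control coming from Lemma~\ref{lem:transmission} does not eliminate the extra constant; the sum then only satisfies the bound up to a harmless factor (at worst $2$). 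The paper's phrase ``the same bound also holds'' should therefore be read as a bound of the same form and $s$-dependence up to a constant, which is all that is used in the sequel.
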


%\begin{proof}
%	Let $(\wvarphi,\wpsi)\in \mXG\times\mXG$ be arbitrary and $\widehat{\E},\widehat{\H} \in \H(\curl,\mathbb{R}^3\setminus \Gamma)$ be the solutions to the associated transmission problem of Lemma~\ref{lem:transmission}.
%	Applying consecutively the jump condition of the Calder\'on operator \eqref{eq:calderon-jump}, the trace theorem \cite[Theorem~4.1]{BCS02} (with constant $\Ctrace$), and finally the bound \eqref{energy-estimate} yields
%	\begin{align*}
%	\norm{\Cald(s)
%		\begin{pmatrix}
%		\wvarphi\\ \wpsi
%		\end{pmatrix}
%	}^2_{\mXG\times\mXG} 	= &\
%	\norm{
%		\begin{pmatrix}
%		\avg{\gaT\widehat{\E}} \\ \mu\avg{\gaT\widehat{\H}}
%		\end{pmatrix}}_{\mXG\times\mXG}^2
%	\\
%	\leq &\ {\Ctrace^2}\left(\big\| \widehat{\E} \big\|_{\H(\curl,\mathbb{R}^3\setminus \Gamma)}^2 +
%	\big\| \mu \widehat{\H} \big\|_{\H(\curl,\mathbb{R}^3\setminus \Gamma)}^2\right)
%%	\\
%%	\leq &\ \Ctrace^2\mu \dfrac{\abs{s}^2+1}{\Re s}\abs{\left[
%%		\begin{pmatrix} \wvarphi\\ \wpsi \end{pmatrix},
%%		\Cald(s)
%%		\begin{pmatrix} \wvarphi\\ \wpsi \end{pmatrix}\right]_\Gamma}
%	\\
%	\leq &\ \Ctrace^2\, \dfrac{\abs{s}^2+1}{\Re s}\norm{\Cald(s)
%	}_{\mXG^2\leftarrow \mXG^2}
%	\norm{
%		\begin{pmatrix}
%		\wvarphi\\ \wpsi
%		\end{pmatrix}
%	}^2_{\mXG\times\mXG}.
%	\end{align*}
%	Now, taking the supremum over all $(\wvarphi,\wpsi)\in\mXG\times\mXG$ of unit norm and then dividing by the operator norm on both sides yields the stated result.
%\qed
%\end{proof}
%

We extend the skew-hermitian pairing $[\cdot,\cdot]_\Gamma$ from $\mXG\times \mXG$ to $\mXG^2\times \mXG^2$ in the obvious way:
$$
\left[ \begin{pmatrix} \bvar \\ \bpsi \end{pmatrix} , \begin{pmatrix} \bupsilon \\ \bxi \end{pmatrix} \right]_\Gamma =
[ \bvar,\bupsilon]_\Gamma + [ \bpsi,\bxi]_\Gamma.
$$
The Calder\'on operator $\Cald(s)$ is coercive with respect to the pairing $[\cdot,\cdot]_\Gamma$, as was shown in \cite[Lemma~3.1]{KL17}.  Here we give an improved formulation of this key lemma with an $s$-explicit bound  and we provide a restructured proof.
%, and we include the short proof that  reframes the arguments of the proof in~\cite{KL17} within the present setting.

\begin{lemma}[Coercivity of the Calder\'on operator]%[essentially {\cite[Lemma~3.1]{KL17}}]
	\label{lem:B-coercivity}
	For $\Re s>0$,  we have the coercivity
	\begin{equation}
	\label{eq:coercivity Calderon}
	\textnormal{Re} \left[
	\begin{pmatrix} \bvar\\ \bpsi \end{pmatrix}
	,\Cald(s)
	\begin{pmatrix} \bvar\\ \bpsi \end{pmatrix}
	\right]_\Gamma
	\ge \frac 1{c_\Gamma^{2}} \, \frac{\Re s}{|s|^{2}+1}\, \left(\bigl\|{\bvar}\bigr\|^2_{\mXG}+\bigl\|\bpsi\bigr\|^2_{\mXG}\right)
	%		\geq C_\Gamma \min(\sigma,\sigma^3\varepsilon\mu) \textnormal{Re } s\left((\varepsilon\mu)^{-1}\norm{s^{-1}\bvar}^2_{\mVG}+\norm{s^{-1}\bpsi}^2_{\mXG}\right),
%	\geq  c_\sigma \, \Re s \left(\bigl\|{s^{-1}\bvar}\bigr\|^2_{\mXG}+\bigl\|s^{-1}\bpsi\bigr\|^2_{\mXG}\right),
	\end{equation}
	for all $(\bvar,\bpsi) \in \mXG^2 $. Here,  $\ctrace=\| [\gaT] \|_{\mXG \leftarrow \H(\curl,\mathbb{R}^3\setminus\Gamma)}$.
\end{lemma}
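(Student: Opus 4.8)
The plan is to reduce the coercivity estimate to the transmission–problem identity that was already established in the proof of Lemma~\ref{lem:transmission}. Given $(\bvar,\bpsi)\in\mXG^2$, I would first take $\bvar,\bpsi$ in the dense subspace of regular densities and let $\widehat\E,\widehat\B$ be the electric and magnetic fields defined by the representation formula \eqref{eq:time-harmonic-kirchhoff-E3}--\eqref{eq:time-harmonic-kirchhoff-H3}. By Lemma~\ref{lem:transmission} these fields lie in $\H(\curl,\R^3\setminus\Gamma)$, they solve the transmission problem \eqref{eq:transmis-1}--\eqref{eq:transmis-4}, and hence, using the jump relations \eqref{jump-rel} together with \eqref{eq:transmis-3}--\eqref{eq:transmis-4}, one has $\jmp{\gaT}\widehat\B=\bvar$, $-\jmp{\gaT}\widehat\E=\bpsi$, and, by \eqref{eq:calderon-jump},
\[
\Cald(s)\begin{pmatrix}\bvar\\\bpsi\end{pmatrix}=\begin{pmatrix}\avg{\gaT}\widehat\E\\\avg{\gaT}\widehat\B\end{pmatrix}.
\]
Therefore the left-hand side of \eqref{eq:coercivity Calderon} equals $\Re\bigl([\jmp{\gaT}\widehat\B,\avg{\gaT}\widehat\E]_\Gamma+[-\jmp{\gaT}\widehat\E,\avg{\gaT}\widehat\B]_\Gamma\bigr)$.

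Next I would invoke verbatim the computation from the proof of Lemma~\ref{lem:transmission}: that quantity is exactly $\Re I$ with $I=\int_{\R^3\setminus\Gamma}\bigl(\bar s\,|\widehat\E|^2+s\,|\widehat\B|^2\bigr)\,\mathrm{d}x$, which is \eqref{ReI-2}, and the choice of the parameter $\theta$ made there gives, by \eqref{ReI-1},
\[
\Re I=\frac{\Re s}{|s|^2+1}\Bigl(\|\widehat\E\|_{\H(\curl,\R^3\setminus\Gamma)}^2+\|\widehat\B\|_{\H(\curl,\R^3\setminus\Gamma)}^2\Bigr).
\]
Thus the left-hand side of \eqref{eq:coercivity Calderon} is nonnegative and equals this expression. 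It then remains to bound the densities from above by the fields using the boundedness of the jump trace: $\|\bvar\|_{\mXG}=\|\jmp{\gaT}\widehat\B\|_{\mXG}\le\ctrace\,\|\widehat\B\|_{\H(\curl,\R^3\setminus\Gamma)}$ and likewise $\|\bpsi\|_{\mXG}\le\ctrace\,\|\widehat\E\|_{\H(\curl,\R^3\setminus\Gamma)}$, so that
\[
\|\bvar\|_{\mXG}^2+\|\bpsi\|_{\mXG}^2\le\ctrace^2\Bigl(\|\widehat\E\|_{\H(\curl,\R^3\setminus\Gamma)}^2+\|\widehat\B\|_{\H(\curl,\R^3\setminus\Gamma)}^2\Bigr).
\]
Substituting this into the identity above yields \eqref{eq:coercivity Calderon} for regular densities.

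Finally I would run a density argument: since $\Cald(s)$ is bounded on $\mXG^2$ by Lemma~\ref{lemma:B-boundedness} and the pairing $[\cdot,\cdot]_\Gamma$ is continuous on $\mXG^2\times\mXG^2$, both sides of \eqref{eq:coercivity Calderon} depend continuously on $(\bvar,\bpsi)\in\mXG^2$, so the inequality extends from the dense subspace of regular densities to all of $\mXG^2$.

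I do not anticipate a genuine obstacle: the analytic heart — producing the sharp factor $\Re s/(|s|^2+1)$ together with the full $\H(\curl,\R^3\setminus\Gamma)$ norms of $\widehat\E$ and $\widehat\B$ — is already contained in the proof of Lemma~\ref{lem:transmission}, and what is added here is only the elementary trace bound in the opposite direction and a density argument. The two points that need care are (i) ensuring that the fields from \eqref{eq:time-harmonic-kirchhoff-E3}--\eqref{eq:time-harmonic-kirchhoff-H3} genuinely have the prescribed jumps and finite $\H(\curl,\R^3\setminus\Gamma)$ norm, so that \eqref{eq:calderon-jump} may be applied — this is exactly what Lemma~\ref{lem:transmission} provides — and (ii) that the constant in the estimate is $\ctrace=\|[\gaT]\|_{\mXG\leftarrow\H(\curl,\R^3\setminus\Gamma)}$, the norm of the \emph{jump} trace, which enters through the lower bound on $\|\widehat\E\|_{\H(\curl)}^2+\|\widehat\B\|_{\H(\curl)}^2$ rather than through an operator bound on $\Cald(s)$.
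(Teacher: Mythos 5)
Your proof is correct and follows essentially the same route as the paper: generate $(\widehat\E,\widehat\B)$ from the representation formula, identify $\Re[(\bvar,\bpsi),\Cald(s)(\bvar,\bpsi)]_\Gamma$ with $\Re I$ via \eqref{ReI-2}, use \eqref{ReI-1} to express it as $\frac{\Re s}{|s|^2+1}\bigl(\|\widehat\E\|^2_{\H(\curl)}+\|\widehat\B\|^2_{\H(\curl)}\bigr)$, and close with the jump-trace bound $\|\bvar\|^2_{\mXG}+\|\bpsi\|^2_{\mXG}\le c_\Gamma^2\bigl(\|\widehat\E\|^2+\|\widehat\B\|^2\bigr)$. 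The paper presents the chain of (in)equalities in the reverse direction starting from $\|(\bvar,\bpsi)\|^2_{\mXG^2}$ and dispenses with your concluding density step by working directly with arbitrary $(\bvar,\bpsi)\in\mXG^2$ and the density extension already built into Lemma~\ref{lem:transmission}, but the content is the same.
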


\begin{proof}
	Let $(\wvarphi,\wpsi)\in \mXG^2$ be arbitrary and $\widehat{\E},\widehat{\B} \in \H(\curl,\mathbb{R}^3\setminus \Gamma)$ be the solutions to the associated transmission problem of Lemma~\ref{lem:transmission}.  We then have
%	consecutively by
%	\eqref{eq:transmis-3}--\eqref{eq:transmis-4}, by the bound $\ctrace$ of the jump operator $[\gaT]$,
%	and by \eqref{ReI-1}--\eqref{ReI-2},
	\begin{align*}
&\norm{\begin{pmatrix}
		\wvarphi\\ \wpsi
		\end{pmatrix}
	}^2_{\mXG\times\mXG}
	 =
	\norm{\begin{pmatrix}\jmp{\gaT }\widehat \B
	\\ -\jmp{\gaT} \widehat \E\end{pmatrix}
	}^2_{\mXG\times\mXG}
	& \quad\text{by~\eqref{eq:transmis-3}--\eqref{eq:transmis-4}}
	\\
	& \le \ctrace^2 \left(  \big\| \widehat{\B} \big\|_{\H(\curl,\mathbb{R}^3\setminus \Gamma)}^2
	+ \big\| \widehat{\E} \big\|_{\H(\curl,\mathbb{R}^3\setminus \Gamma)}^2
        \right)
        &\quad\text{by def.~of $c_\Ga$}
        \\
        &= c_\Gamma^2 \,\dfrac{\abs{s}^2+1}{\Re s}	\,
	\Re \left[
	%\begin{pmatrix} \wvarphi\\ \wpsi \end{pmatrix},
        \begin{pmatrix}\jmp{\gaT }\widehat \B \\ -\jmp{\gaT} \widehat \E \end{pmatrix},
        \begin{pmatrix}\avg{\gaT}\widehat{\E} \\ \avg{\gaT}\widehat{\B} \end{pmatrix}
        \right]_\Gamma
        &\quad \text{by \eqref{ReI-1}--\eqref{ReI-2}}
         \\
         &=  c_\Gamma^2 \,\dfrac{\abs{s}^2+1}{\Re s}	\,
	\Re \left[
	\begin{pmatrix}\jmp{\gaT }\widehat \B\\ -\jmp{\gaT} \widehat \E \end{pmatrix},
	%\begin{pmatrix} \wvarphi\\ \wpsi \end{pmatrix},
	%\Cald(s)
	%\begin{pmatrix} \wvarphi\\ \wpsi \end{pmatrix}\right]_\Gamma\,,
	\Cald(s)\!\begin{pmatrix}\jmp{\gaT }\widehat \B\\ -\jmp{\gaT} \widehat \E \end{pmatrix}\right]_\Gamma
	&\quad\text{by \eqref{eq:calderon-jump}}
	\\
	&=  c_\Gamma^2 \,\dfrac{\abs{s}^2+1}{\Re s}	\,
	\Re \left[
	\begin{pmatrix} \wvarphi\\ \wpsi \end{pmatrix},
	%\Cald(s)
	%\begin{pmatrix} \wvarphi\\ \wpsi \end{pmatrix}\right]_\Gamma\,,
	\Cald(s)\! \begin{pmatrix} \wvarphi\\ \wpsi \end{pmatrix}\right]_\Gamma
	&\quad\text{by~\eqref{eq:transmis-3}--\eqref{eq:transmis-4}}.
	\end{align*}
	
%where the last equality follows from \eqref{eq:calderon-jump} on inserting \eqref{eq:transmis-3}--\eqref{eq:transmis-4}.
This yields the result.
\qed
\end{proof}

\subsection{Boundary integral equation for tangential traces under time-harmonic generalized impedance boundary conditions}
%This section develops boundary integral equations for \eqref{TH-MW1}--\eqref{TH-GIBC}.
%We denote the boundary densities with
%\begin{align*}
%\widehat{\bvar}=-\gamma_N \widehat{\E} \quad  \text{and} \quad
%\widehat{\bpsi}=-\gaT \widehat{\E}.
%\end{align*}
%Rearranging then yields the identity
%\begin{align*}
%\widehat{\bvar} &= -\gamma_N \widehat{\E}=-\gaT(s^{-1} \curl \widehat{\E})= \gaT \widehat{\H}.
%\end{align*}
%
%\noindent We introduce a shifted Calder\'on operator, which plays the central role in the development of boundary integral equations
%\begin{align}\label{boundary_dens_sys}
%\begin{split}
%\Cald(s)\begin{pmatrix}-\gamma_N E \\ -\gaT E\end{pmatrix}
%:=
%\left(
%\begin{pmatrix}
%V(s) & K(s) \\
%-K(s) & -V(s)
%\end{pmatrix}
%+\dfrac{1}{2}
%\begin{pmatrix}
%0 & -I \\
%-I & 0
%\end{pmatrix}
%\right)
%\begin{pmatrix}-\gamma_N \widehat{\E} \\ -\gaT \widehat{\E}\end{pmatrix}
%=
%\begin{pmatrix}
%\gaT \widehat{\E} \\
%0
%\end{pmatrix}.
%\end{split}
%\end{align}
We now derive a well-posed boundary integral equation of the time-harmonic Maxwell's equations \eqref{TH-MW1}--\eqref{TH-MW2}  for $\Re s>0$ with the weak formulation of the generalized impedance boundary condition \eqref{gibc-weak},
\begin{equation}\label{gibc-weak-harmonic}
[\bupsilon,\gaT \widehat{\E}]_\Gamma + \langle \bupsilon, \Z(s)\gaT \widehat{\B} \rangle_\Gamma = \langle \bupsilon,\widehat{\g}\inc\rangle_\Gamma
\qquad\text{for all $\bupsilon\in\mVG$},
\end{equation}
where the transfer operator $\Z(s)$ satisfies \eqref{eq:pol_bound}--\eqref{eq:positive_type}, and $\wginc\in \mVG'$ is arbitrary.
%$\wginc = -\widehat{\E}\inc_T -\Z(s)\gaT \widehat{\H}\inc$, cf.~\eqref{eq:def g inc}.

We start with the observation that any solution of the time-harmonic Maxwell's  equations on the exterior domain $\Omega=\Omega^+$, trivially extended by zero into the bounded interior
$\Omega^-$, solves an associated transmission problem as in Lemma~\ref{lem:transmission}.
As the inner traces of the extended fields vanish by construction, the jumps and the averages reduce to the outer traces and the representation formulas can be evaluated by the boundary data, as in \eqref{eq:time-harmonic-kirchhoff-E}--\eqref{eq:time-harmonic-kirchhoff-H}.

Therefore, the relation \eqref{eq:calderon-jump} of the Calder\'on operator then reads
%Consequently, the traces of time-harmonic Maxwell solutions $(\widehat E, \widehat H)\in \H(\curl,\Omega)$, which are given by the representation \eqref{eq:time-harmonic-kirchhoff-E}--\eqref{eq:time-harmonic-kirchhoff-H}, satisfy
\begin{align}\label{eq:cald-identity}
\Cald(s)
\begin{pmatrix}
\gaT \widehat{\B} \\
-\gaT \widehat{\E}
\end{pmatrix}
= \dfrac{1}{2}
\begin{pmatrix}
\gaT\widehat{\E} \\
\gaT \widehat{\B}
\end{pmatrix} .
\end{align}
In analogy to \cite{BanjaiRieder,BanjaiLubich2019,BLN20} in the acoustic case, where a skew-symmetric operator is added to the acoustic Calder\'on operator, we rewrite this identity by adding a {\it symmetric} block operator and arrive at
\begin{align}\label{eq:Bimp_s}
\begin{split}
\Bimp(s)
\begin{pmatrix}
\gaT \widehat{\B} \\
-\gaT \widehat{\E}
\end{pmatrix}
=
\begin{pmatrix}
\gaT\widehat{\E} \\
0
\end{pmatrix},
\quad \quad
\Bimp(s) = \Cald(s)+
\begin{pmatrix}
0 & -\tfrac{1}{2}\Id \\ % \I in the past
-\tfrac{1}{2}\Id  & 0
\end{pmatrix}
\end{split}.
\end{align}
%Since
%$$
%	\textnormal{Re} \left[
%	\begin{pmatrix} \bvar\\ \bpsi \end{pmatrix}
%	,
%	\begin{pmatrix}
%0 & -\tfrac{1}{2}\I \\
%-\tfrac{1}{2}\I  & 0
%\end{pmatrix}
%	\begin{pmatrix} \bvar\\ \bpsi \end{pmatrix}
%	\right]_\Gamma 
%	= 0,
%$$
%the coercivity of Lemma~\ref{lem:B-coercivity} holds true also for the modified Calder\'on operator $\Bimp(s)$.
We introduce the boundary densities
\begin{equation}\label{phi-psi}
	{\wvarphi=\gaT\widehat{\B}}, \qquad {\wpsi=-\gaT \widehat{\E}} ,
\end{equation}
and test both sides with $(\bupsilon,\bxi)\in \mVG \times\mXG$. This yields
\begin{align*}
\left[
\begin{pmatrix}
\bupsilon \\ \bxi
\end{pmatrix},
\Bimp(s)
\begin{pmatrix}
\wvarphi \\
\wpsi
\end{pmatrix}
\right]_\Gamma
=\left[
\bupsilon,\gaT\widehat{\E}\right]_\Gamma.
\end{align*}
Inserting the boundary condition \eqref{gibc-weak-harmonic} on the right-hand side then leads
%, upon rearranging the impedance operator to the left-hand side, 
to the weak formulation of the boundary integral equation that will be studied here.

\bigskip\noindent
{\bf Boundary integral equation:}
{\it For $\Re s>0$ and given $\wginc\in \mVG'$, find $(\wvarphi, \wpsi)\in \mVG\times\mXG$ such that, for all $(\bupsilon,\bxi) \in \mVG\times\mXG$,}
\begin{align}
\label{bie-weak}
\left[
\begin{pmatrix}
\bupsilon \\ \bxi
\end{pmatrix},
\Bimp(s)
\begin{pmatrix}
\wvarphi \\
\wpsi
\end{pmatrix}
\right]_\Gamma+
\langle \bupsilon, \Z(s) \wvarphi \rangle_\Gamma = \langle \bupsilon,\widehat{\g}\inc\rangle_\Gamma.
\end{align}

%% noindent
\bigskip
We introduce the family of operators ${\A(s):\mVG\times\mXG\rightarrow \mVG'\times\mXG'}$ that is defined by the left-hand side above, i.e.,
for all $(\bvar,\bpsi)$ and $(\bupsilon,\bxi)\in \mVG\times\mXG$,
\begin{align}\label{Asoperator}
\left\langle
\begin{pmatrix}
\bupsilon \\ \bxi
\end{pmatrix},
\A(s)\begin{pmatrix}\bvar \\ \bpsi\end{pmatrix}\right\rangle_\Gamma=
\left[
\begin{pmatrix}
\bupsilon \\ \bxi
\end{pmatrix},
\Bimp(s)
\begin{pmatrix}
\bvar \\
\bpsi
\end{pmatrix}
\right]_\Gamma+
\langle \bupsilon, \Z(s)\bvar\rangle_\Gamma,
\end{align}
where $\langle \cdot,\cdot\rangle_\Gamma$ denotes the anti-duality between  $\mVG\times\mXG$ and $\mVG'\times\mXG'$ on the left-hand side, and between $\mVG$ and $\mVG'$ on the right-hand side. The boundary integral equation \eqref{bie-weak} then reads more compactly as follows: find $(\wvarphi, \wpsi)\in \mVG\times\mXG$ such that
\begin{equation}\label{bie-weak-A}
\left\langle
\begin{pmatrix}
\bupsilon \\ \bxi
\end{pmatrix},
\A(s)\begin{pmatrix}\wvarphi \\ \wpsi\end{pmatrix}\right\rangle_\Gamma=
\langle\bupsilon,\widehat{\g}\inc\rangle_\Gamma
\qquad \text{for all }\ (\bupsilon,\bxi) \in \mVG\times\mXG.
\end{equation}

We write \eqref{bie-weak-A} even more compactly as
\begin{equation}\label{bie-A}
\A(s) \begin{pmatrix}
\wvarphi \\
\wpsi
\end{pmatrix} = \begin{pmatrix}
\widehat{\g}\inc \\
\mathbf{0}
\end{pmatrix}.
\end{equation}
%where the right-hand side is interpreted as an element in $\mVG'\times \mXG$.
%Note that the outer fields $(\widehat{\E}, \widehat{\H})$ characterized the boundary densities $(\wvarphi,\wpsi)$, from \eqref{phi-psi}, can be obtained by \eqref{eq:time-harmonic-kirchhoff-E}--\eqref{eq:time-harmonic-kirchhoff-H}:
%\begin{align}
%\widehat{\E}&=-\mathcal{S}(s)\wvarphi + \mathcal{D}(s)\wpsi,\label{eq:time-harmonic-kirchhoff-E2}
%\\
%\widehat{\H}&=-\mathcal{S}(s)\, \varepsilon\wpsi \,- \mathcal{D}(s)\mu^{-1}\wvarphi.\label{eq:time-harmonic-kirchhoff-H2}
%\end{align}
%

The boundary integral operator $\A(s)$ defined by \eqref{Asoperator} inherits the bounds and positivity properties of the Calder\'{o}n operator  $\Cald(s)$ and the impedance operator $\Z(s)$, respectively, from Lemma~\ref{lemma:B-boundedness}--\ref{lem:B-coercivity} and \eqref{eq:pol_bound}--\eqref{eq:positive_type}, as the following two lemmas state.

\begin{lemma}
	\label{lem:A-boundedness}
	The operators $\A(s):\mVG\times\mXG \rightarrow \mVG'\times \mXG'$ defined by \eqref{Asoperator} form an analytic family of bounded linear operators that satisfy the bound, for $\Re s \ge \sigma > 0$,
	\begin{align*}
	\norm{\A(s)}_{\mVG'\times\mXG' \leftarrow \mVG\times \mXG}\le C_\sigma \dfrac{\abs{s}^2}{\Re s}.
	\end{align*}
	The constant $C_\sigma$ only depends polynomially on $\sigma^{-1}$ and on the boundary $\Gamma$ via the norm of the tangential trace operator.
\end{lemma}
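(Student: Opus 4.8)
The plan is to bound the operator $\A(s)$ by bounding each of the two contributions on the right-hand side of its defining identity \eqref{Asoperator}, namely the Calder\'on-type term $[(\bupsilon,\bxi)^\top,\Bimp(s)(\bvar,\bpsi)^\top]_\Gamma$ and the impedance term $\langle \bupsilon, \Z(s)\bvar\rangle_\Gamma$, using duality via the non-degenerate pairing $[\cdot,\cdot]_\Gamma$ on $\mXG^2$ and the anti-duality between $\mVG$ and $\mVG'$. First I would note that $\mVG\times\mXG$ embeds continuously into $\mXG\times\mXG$ (indeed $\|\bvar\|_\mXG\le\|\bvar\|_\mVG$ by \eqref{full-V-norm}), so the pairing of $\Bimp(s)$ makes sense. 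Then I would estimate, for arbitrary $(\bvar,\bpsi)\in\mVG\times\mXG$ and $(\bupsilon,\bxi)\in\mVG\times\mXG$,
\[
\Bigl| \Bigl[\begin{pmatrix}\bupsilon\\\bxi\end{pmatrix},\Bimp(s)\begin{pmatrix}\bvar\\\bpsi\end{pmatrix}\Bigr]_\Gamma \Bigr|
\le \norm{\Bimp(s)}_{\mXG^2\leftarrow\mXG^2}\,\Bigl\|\begin{pmatrix}\bupsilon\\\bxi\end{pmatrix}\Bigr\|_{\mXG^2}\Bigl\|\begin{pmatrix}\bvar\\\bpsi\end{pmatrix}\Bigr\|_{\mXG^2},
\]
since $[\cdot,\cdot]_\Gamma$ identifies $\mXG^2$ with its own dual with constant $1$. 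By \eqref{eq:Bimp_s}, $\Bimp(s)=\Cald(s)+R$ with $R$ the fixed bounded skew block $\begin{pmatrix}0&-\tfrac12\Id\\-\tfrac12\Id&0\end{pmatrix}$, whose $\mXG^2\to\mXG^2$ norm is $\tfrac12$; combined with the Calder\'on bound from Lemma~\ref{lemma:B-boundedness}, $\norm{\Cald(s)}_{\mXG^2\leftarrow\mXG^2}\le \Ctrace^2\,\tfrac{|s|^2+1}{\Re s}$, this gives $\norm{\Bimp(s)}_{\mXG^2\leftarrow\mXG^2}\le \Ctrace^2\tfrac{|s|^2+1}{\Re s}+\tfrac12 \le C\,\tfrac{|s|^2}{\Re s}$ for $\Re s\ge\sigma$, because $\tfrac{|s|^2+1}{|s|^2}\le 1+\sigma^{-2}$ and $\tfrac{\Re s}{|s|^2}\le \tfrac1{\Re s}\le\sigma^{-1}$. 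Using $\|(\bupsilon,\bxi)\|_{\mXG^2}\le\|(\bupsilon,\bxi)\|_{\mVG\times\mXG}$ and likewise for $(\bvar,\bpsi)$ then controls the Calder\'on term by $C_\sigma\tfrac{|s|^2}{\Re s}$ times the product of the $\mVG\times\mXG$ norms.

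For the impedance term, I would invoke the polynomial bound \eqref{eq:pol_bound}: $|\langle\bupsilon,\Z(s)\bvar\rangle_\Gamma|\le\norm{\Z(s)}_{\mVG'\leftarrow\mVG}\|\bupsilon\|_\mVG\|\bvar\|_\mVG\le M_\sigma|s|^\kappa\|\bupsilon\|_\mVG\|\bvar\|_\mVG$ for $\Re s\ge\sigma$. Since the relevant exponents are $\kappa=1$ (Lemma~\ref{lem:EN}) and $\kappa=1/2$ (Lemma~\ref{lem:absorb}), in both cases $|s|^\kappa\le |s|\le |s|^2/\Re s\cdot(\Re s/|s|)\le |s|^2/\Re s$ when $|s|\ge\Re s$, and more simply $|s|^\kappa = |s|^\kappa\,\tfrac{\Re s}{|s|^2}\,\tfrac{|s|^2}{\Re s}$ with $|s|^\kappa\,\tfrac{\Re s}{|s|^2}\le |s|^{\kappa-2}\Re s\le \sigma^{\kappa-1}$ for $\kappa\le 1$ and $\Re s\ge\sigma$ (using $|s|\ge\Re s\ge\sigma$ and $\kappa-2<0$); hence $|s|^\kappa\le\sigma^{\kappa-1}\tfrac{|s|^2}{\Re s}$. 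So the impedance term is also bounded by $C_\sigma\tfrac{|s|^2}{\Re s}\|\bupsilon\|_\mVG\|\bvar\|_\mVG$. Adding the two contributions and taking the supremum over $(\bupsilon,\bxi)$ of unit norm in $\mVG\times\mXG$, the definition of the dual norm on $\mVG'\times\mXG'$ yields $\norm{\A(s)}_{\mVG'\times\mXG'\leftarrow\mVG\times\mXG}\le C_\sigma\tfrac{|s|^2}{\Re s}$, with $C_\sigma$ a polynomial in $\sigma^{-1}$ and in $\Ctrace$; this establishes the stated bound.

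Analyticity is the remaining, easier point: $\Cald(s)$ is analytic in $\{\Re s>0\}$ because the potential operators $\mathcal S(s),\mathcal D(s)$ depend analytically on $s$ through the fundamental solution $G(s,\cdot)$ (entire in $s$ for fixed $\x\neq0$, with the kernel and its $\curl$ differentiated under the integral sign on the smooth-away-from-diagonal part, as is standard for these operators), the block $R$ is constant, and $\Z(s)$ is analytic by hypothesis; a finite sum and composition of analytic operator families is analytic, and uniform-on-compacta operator-norm bounds upgrade weak/strong analyticity to norm analyticity. I expect the main obstacle — such as it is — to be purely bookkeeping: carefully tracking that all estimates hold uniformly for $\Re s\ge\sigma$ with constants depending only polynomially on $\sigma^{-1}$ (and on $\Ctrace$), and making sure the $\kappa=1$ case is still absorbed into $|s|^2/\Re s$ at the threshold $|s|\approx\sigma$; there is no deep difficulty, since all the hard work (the $s$-explicit Calder\'on bound, the positive-type/polynomial structure of $\Z$) has already been done in Lemmas~\ref{lem:transmission}, \ref{lemma:B-boundedness}, \ref{lem:EN} and \ref{lem:absorb}.
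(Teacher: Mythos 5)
Your proposal is correct and follows the same route as the paper's own (one-paragraph) proof: bound the Calder\'on contribution via Lemma~\ref{lemma:B-boundedness}, bound the impedance contribution via \eqref{eq:pol_bound}, and control the constant identity blocks in $\Bimp$ via the continuous embeddings $\mVG \subset \mXG = \mXG' \subset \mVG'$. You are in fact more careful than the paper in spelling out the absorption $|s|^\kappa \le \sigma^{\kappa-1}\,|s|^2/\Re s$, which (as you correctly note) relies on $\kappa\le 1$ — a restriction the paper leaves implicit in the lemma's statement but assumes explicitly when the result is later applied.
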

\begin{proof}
	%Let $(\bvar,\bpsi)\in\mVG\times\mXG$ be arbitrary with $\norm{(\bvar,\bpsi)}_{\mVG\times\mXG}=1$ and furthermore ${(\bupsilon,\bxi)\in \mVG \times\mXG}$. Then
	%\begin{align}
	%\left\langle
	%\begin{pmatrix}
	%\bupsilon \\ \bxi
	%\end{pmatrix},
	%A(s)\begin{pmatrix}\bvar \\ \bpsi\end{pmatrix}\right\rangle_\Gamma=
	%\left[
	%\begin{pmatrix}
	%\bupsilon \\ \bxi
	%\end{pmatrix},
	%	\Bimp(s)
	%		\begin{pmatrix}
	%			\bvar \\
	%			\bpsi
	%		\end{pmatrix}
	%\right]_\Gamma+
	%\mu^{-1}\langle \bupsilon, \Z(s)\bvar \rangle_\Gamma,
	%\end{align}
	The bound \eqref{calderon-strong-bound}  of the Calder\'on operator $\Cald(s)$ given in Lemma \ref{lem:B-coercivity} and the polynomial bound \eqref{eq:pol_bound} of the impedance operator $\Z(s)$ yield estimates on all the terms appearing on the right-hand side of \eqref{Asoperator} with the exceptions of the identity operators $\Id_{\mVG' \leftarrow \mXG}$ and $\Id_{\mXG' \leftarrow \mVG}$ occurring in $\Bimp$, which % \I in the past
	are bounded in view of the continuous embeddings ${\mVG \subset \mXG = \mXG' \subset \mVG'}$.
	%\begin{align*}
	%\norm{\bpsi}_{\mVG'} &=
	%\sup_{\norm{\bupsilon}_{\mVG}=1} \abs{\left[\bupsilon, \bpsi\right]_\Gamma}
	%\le\sup_{\norm{\bupsilon}_{\mVG}=1} \norm{\bupsilon}_{\mXG}
	%\norm{\bpsi}_{\mXG}
	%\le \norm{\bpsi}_{\mXG} \ ,
	% \\
	%\norm{\bpsi}_{\mXG '}& = \sup_{\norm{\bupsilon}_{\mXG}=1} \abs{\left[\bupsilon, \bpsi\right]_\Gamma} \le
	%\sup_{\norm{\bupsilon}_{\mXG}=1}
	%\norm{\bupsilon}_{\mXG}
	%\norm{\bpsi}_{\mXG}\le \norm{\bpsi}_{\mVG}.
	%\end{align*}
	\qed
\end{proof}

\begin{lemma}
	\label{lem:A-coercivity}
	The operator family $\A(s)$ has the following coercivity property:
	For every $ \sigma>\sigma_0$ (with $\sigma_0\ge 0$ of \eqref{eq:positive_type}), there exists a constant $c_\sigma > 0$ such that
	for $\Re s \ge \sigma$,
	\begin{align*}
	\Re \left\langle
	\begin{pmatrix} \bvar\\ \bpsi \end{pmatrix}
	,\A(s)
	\begin{pmatrix} \bvar\\ \bpsi \end{pmatrix}
	\right\rangle_\Gamma\ge  c_{\sigma} \, \frac{\Re s}{|s|^2}  \left(\bigl\|{\bvar}\bigr\|^2_{\mVG}+\bigl\|\bpsi\bigr\|^2_{\mXG}\right) 
%	, \qquad \text{for all } \ (\bvar,\bpsi) \in \mVG\times \mXG,
	\end{align*}
	for all $ (\bvar,\bpsi) \in \mVG\times \mXG$. The constant $c_\sigma$ only depends polynomially on $\sigma^{-1}$ and on the boundary $\Gamma$ via the norm of the tangential trace operator.
	%	The constant $c_\sigma$ only depends on the surface $\Gamma $ and on $\sigma$.
\end{lemma}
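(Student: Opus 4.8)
The plan is to estimate the real part of the quadratic form associated with $\A(s)$ by splitting $\A(s)$ into its three building blocks: the Calder\'on operator $\Cald(s)$, the symmetric off-diagonal block that distinguishes $\Bimp(s)$ from $\Cald(s)$ in \eqref{eq:Bimp_s}, and the impedance operator $\Z(s)$ acting on the first component. Setting $\bupsilon=\bvar$ and $\bxi=\bpsi$ in the defining identity \eqref{Asoperator} gives
\[
\Re\left\langle\begin{pmatrix}\bvar\\\bpsi\end{pmatrix},\A(s)\begin{pmatrix}\bvar\\\bpsi\end{pmatrix}\right\rangle_\Gamma
= \Re\left[\begin{pmatrix}\bvar\\\bpsi\end{pmatrix},\Bimp(s)\begin{pmatrix}\bvar\\\bpsi\end{pmatrix}\right]_\Gamma + \Re\langle\bvar,\Z(s)\bvar\rangle_\Gamma ,
\]
so it suffices to bound the two terms on the right from below.

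The first key observation is that the symmetric perturbation in \eqref{eq:Bimp_s} is invisible to the real part of the pairing $[\cdot,\cdot]_\Gamma$: since $\begin{pmatrix}0 & -\tfrac12\Id\\ -\tfrac12\Id & 0\end{pmatrix}\begin{pmatrix}\bvar\\\bpsi\end{pmatrix}=\begin{pmatrix}-\tfrac12\bpsi\\ -\tfrac12\bvar\end{pmatrix}$, its contribution to the sesquilinear form is $-\tfrac12\bigl([\bvar,\bpsi]_\Gamma+[\bpsi,\bvar]_\Gamma\bigr)$, which is purely imaginary because $[\cdot,\cdot]_\Gamma$ is skew-hermitian. (This is the electromagnetic counterpart of adding a skew-symmetric operator to the acoustic Calder\'on operator as in \cite{BanjaiRieder,BLN20}; here the roles of symmetric and skew-symmetric are interchanged precisely because $[\cdot,\cdot]_\Gamma$ is itself skew-hermitian.) Consequently $\Re[(\bvar,\bpsi),\Bimp(s)(\bvar,\bpsi)]_\Gamma=\Re[(\bvar,\bpsi),\Cald(s)(\bvar,\bpsi)]_\Gamma$, and the coercivity of the Calder\'on operator (Lemma~\ref{lem:B-coercivity}) yields the lower bound $\tfrac{1}{c_\Gamma^{2}}\tfrac{\Re s}{|s|^{2}+1}\bigl(\|\bvar\|_\mXG^2+\|\bpsi\|_\mXG^2\bigr)$. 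For the impedance term, the positive-type hypothesis \eqref{eq:positive_type} gives, for $\Re s\ge\sigma>\sigma_0$, the estimate $\Re\langle\bvar,\Z(s)\bvar\rangle_\Gamma\ge c_\sigma\,\Re s\,\bigl|s^{-1}\bvar\bigr|_\mVG^2=c_\sigma\tfrac{\Re s}{|s|^{2}}|\bvar|_\mVG^2\ge0$.

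It then remains to add these two bounds and to match the two powers of $s$: since $\Re s\ge\sigma$ forces $|s|\ge\sigma$, one has $\tfrac{1}{|s|^{2}+1}\ge\tfrac{\sigma^2}{1+\sigma^2}\tfrac{1}{|s|^{2}}$. Taking $c_\sigma'=\min\bigl\{\,c_\Gamma^{-2}\sigma^2/(1+\sigma^2),\ c_\sigma\,\bigr\}$ and using the definition \eqref{full-V-norm} of the full norm $\|\bvar\|_\mVG^2=\|\bvar\|_\mXG^2+|\bvar|_\mVG^2$ produces exactly
\[
\Re\left\langle\begin{pmatrix}\bvar\\\bpsi\end{pmatrix},\A(s)\begin{pmatrix}\bvar\\\bpsi\end{pmatrix}\right\rangle_\Gamma
\ge c_\sigma'\,\frac{\Re s}{|s|^{2}}\bigl(\|\bvar\|_\mVG^2+\|\bpsi\|_\mXG^2\bigr),
\]
and the polynomial dependence of $c_\sigma'$ on $\sigma^{-1}$ follows from that of $c_\sigma$ in \eqref{eq:positive_type} together with the factor $\sigma^2/(1+\sigma^2)$. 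I do not expect a genuine obstacle here: the only point requiring care is the cancellation of the symmetric block in the real part (and keeping the $|s|$- and $\Re s$-powers straight), after which the statement follows by combining Lemma~\ref{lem:B-coercivity} with the positivity of $\Z(s)$.
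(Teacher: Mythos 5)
Your proposal is correct and follows essentially the same argument as the paper: split $\A(s)$ into $\Cald(s)$, the symmetric off-diagonal block, and $\Z(s)$; observe that the symmetric block contributes a purely imaginary quantity to the quadratic form because $[\cdot,\cdot]_\Gamma$ is skew-hermitian; and then combine Lemma~\ref{lem:B-coercivity} with the positive-type condition \eqref{eq:positive_type}, using \eqref{full-V-norm} to assemble the $\mVG$-norm. The only difference is that you explicitly carry out the elementary comparison $\frac{1}{|s|^2+1}\ge\frac{\sigma^2}{1+\sigma^2}\frac{1}{|s|^2}$ for $|s|\ge\Re s\ge\sigma$, a step the paper leaves implicit when passing from the penultimate to the final line.
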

\begin{proof}
	%Applying the definition of the operator family $A(s)$ on the left-hand side leads to
	%\begin{align}
	%\Re \left\langle
	%\begin{pmatrix}
	%\bvar \\ \bpsi
	%\end{pmatrix},
	%A(s)\begin{pmatrix}\bvar \\ \bpsi\end{pmatrix}\right\rangle_\Gamma=\Re
	%\left[
	%\begin{pmatrix}
	%\bvar \\ \bpsi
	%\end{pmatrix},
	%	\Bimp(s)
	%		\begin{pmatrix}
	%			\bvar \\
	%			\bpsi
	%		\end{pmatrix}
	%\right]_\Gamma+
	%\mu^{-1}\Re\langle \bvar, \Z(s)\bvar\rangle_\Gamma.
	%\end{align}
	The operator $\Bimp(s)$ has the same coercivity property as $\Cald(s)$ because with the skew-hermitian pairing we have
	%the additional sum is skew-symmetric with respect to the skew-hermitian pairing, since
	\begin{align*}
		\Re
	\left[
	\begin{pmatrix}
	\bvar \\ \bpsi
	\end{pmatrix},
	\begin{pmatrix}
	0 & \Id \\ % \I in the past
	\Id & 0
	\end{pmatrix}
	\begin{pmatrix}
	\bvar \\
	\bpsi
	\end{pmatrix}
	\right]_\Gamma
	=
		\Re\bigl(
	\left[\bvar,\bpsi \right]_\Gamma+\left[\bpsi,\bvar \right]_\Gamma
		\bigr)
	=0.
	\end{align*}
	Combining the coercivity of the Calder\'on operator, as stated in Lemma~\ref{lem:B-coercivity}, and the positivity condition \eqref{eq:positive_type} on $\Z(s)$ then yield%s the stated result.
	\begin{align*}
	\Re
	\left\langle
	\begin{pmatrix}
	\bvar \\	\bpsi
	\end{pmatrix},
	\A(s)\begin{pmatrix}\bvar \\ \bpsi\end{pmatrix}\right\rangle_\Gamma
	&=
	\Re
	\left[
	\begin{pmatrix}
	\bvar \\	\bpsi
	\end{pmatrix},
	\Cald(s)
	\begin{pmatrix}
	\bvar \\
	\bpsi
	\end{pmatrix}
	\right]_\Gamma+
	\Re \langle \bvar, \Z(s)\bvar\rangle_\Gamma
	\\
	&\ge \frac 1{c_\Gamma^2} \,  \frac{\Re s}{|s|^2+1}  \left(\bigl\|{\bvar}\bigr\|^2_{\mXG}+\bigl\|\bpsi\bigr\|^2_{\mXG}\right) +
	c_\sigma^{(\Z)}  \frac{\Re s}{|s|^2}  \, \bigl| \bvar \bigr| _{\mVG}^2
	\\
	&\ge c_{\sigma} \, \frac{\Re s}{|s|^2}  \left(\bigl\|{\bvar}\bigr\|^2_{\mVG}+\bigl\|\bpsi\bigr\|^2_{\mXG}\right),
	\end{align*}
	which is the stated result.
	\qed
\end{proof}

From the previous two lemmas we obtain the following result.

\begin{theorem} [Well-posedness of the time-harmonic boundary integral equation]
\label{prop:bie}
For $\Re s>\sigma_0\ge 0$, the boundary integral equation~\eqref{bie-weak-A}, with the boundary operator $\A(s):\mVG\times\mXG \rightarrow \mVG'\times \mXG'$ defined by \eqref{Asoperator}, has a unique solution $(\wvarphi,\wpsi)\in\mVG\times \mXG$, and
\begin{equation} \label{phi-psi-bound}
\norm{
	\begin{pmatrix}\wvarphi\\ \wpsi\end{pmatrix}
}_{\mVG\times \mXG}
\le %\dfrac{C_\sigma}{\mathrm{Re } \ s}
 C_\sigma \,\dfrac{\abs{s}^2}{\Re  s}\,\norm{\wginc}_{\mVG'}.
\end{equation}
The constant $C_\sigma$ only depends polynomially on $\sigma^{-1}$ and on the boundary $\Gamma$ via the norm of the tangential trace operator.
\end{theorem}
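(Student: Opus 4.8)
The plan is to apply the Lax--Milgram lemma to the sesquilinear form associated with $\A(s)$ on the Hilbert space $\mVG \times \mXG$. The two ingredients needed are exactly the two preceding lemmas: Lemma~\ref{lem:A-boundedness} gives boundedness of the form with constant $C_\sigma |s|^2/\Re s$, and Lemma~\ref{lem:A-coercivity} gives coercivity with constant $c_\sigma \Re s/|s|^2$. Since the pairing $\langle\cdot,\cdot\rangle_\Gamma$ is an anti-duality identifying $(\mVG\times\mXG)'$ with $\mVG'\times\mXG'$, the variational problem \eqref{bie-weak-A} is precisely the statement $\A(s)(\wvarphi,\wpsi) = (\wginc,\mathbf{0})$ tested against all of $\mVG\times\mXG$. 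Coercivity yields both injectivity and, via Lax--Milgram (or directly Lions' projection theorem for complex sesquilinear forms that are coercive in this one-sided sense), existence and uniqueness of the solution.

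For the quantitative bound, I would test the variational equation with $(\bupsilon,\bxi) = (\wvarphi,\wpsi)$ itself. On the one hand the coercivity estimate of Lemma~\ref{lem:A-coercivity} bounds the real part of the left-hand side from below by $c_\sigma \tfrac{\Re s}{|s|^2}\bigl(\|\wvarphi\|_{\mVG}^2 + \|\wpsi\|_{\mXG}^2\bigr)$; on the other hand the right-hand side is $\langle \wvarphi, \wginc\rangle_\Gamma$, whose modulus is at most $\|\wvarphi\|_{\mVG}\,\|\wginc\|_{\mVG'} \le \bigl(\|\wvarphi\|_{\mVG}^2+\|\wpsi\|_{\mXG}^2\bigr)^{1/2}\|\wginc\|_{\mVG'}$. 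Combining these two inequalities and dividing by $\bigl(\|\wvarphi\|_{\mVG}^2+\|\wpsi\|_{\mXG}^2\bigr)^{1/2}$ gives
\begin{equation*}
\Bigl\|\begin{pmatrix}\wvarphi\\ \wpsi\end{pmatrix}\Bigr\|_{\mVG\times\mXG} \le \frac{1}{c_\sigma}\,\frac{|s|^2}{\Re s}\,\|\wginc\|_{\mVG'},
\end{equation*}
which is exactly \eqref{phi-psi-bound} with $C_\sigma = 1/c_\sigma$. Analyticity of $s\mapsto(\wvarphi,\wpsi)$ follows from analyticity of $\A(s)$ (stated in Lemma~\ref{lem:A-boundedness}) together with the uniform invertibility just established, by the standard argument that the inverse of an analytic, boundedly invertible operator family is analytic.

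I do not expect a genuine obstacle here: the theorem is essentially a corollary packaging of the two lemmas through Lax--Milgram. The only minor points requiring care are (i) checking that the coercivity in Lemma~\ref{lem:A-coercivity} is of the form required by the complex Lax--Milgram/Lions variant --- i.e.\ that bounding $\Re\langle (\bvar,\bpsi),\A(s)(\bvar,\bpsi)\rangle_\Gamma$ from below suffices, which it does since it controls the full norm --- and (ii) being careful that the anti-duality is taken anti-linear in the first argument consistently, so that testing with the solution itself produces $\langle\wvarphi,\wginc\rangle_\Gamma$ on the right and a genuinely real coercive lower bound on the left. Tracking the dependence of $C_\sigma$ on $\sigma^{-1}$ and on $\Gamma$ is immediate since it is inherited verbatim from $c_\sigma$ of Lemma~\ref{lem:A-coercivity}.
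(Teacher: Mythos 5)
Your proposal is correct and follows essentially the same route as the paper: the paper also invokes the Lax--Milgram lemma, feeding in the boundedness from Lemma~\ref{lem:A-boundedness} and the coercivity from Lemma~\ref{lem:A-coercivity} to conclude invertibility of $\A(s)$ with the stated inverse bound. You have merely spelled out the quantitative step (testing with $(\wvarphi,\wpsi)$ itself and dividing) that the paper leaves implicit inside the Lax--Milgram conclusion.
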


\begin{proof}
By the Lax--Milgram theorem, Lemmas~\ref{lem:A-boundedness} and~\ref{lem:A-coercivity} yield that $\A(s)$ is invertible and its inverse is bounded,
for $\Re s \ge \sigma>\sigma_0$, by
\begin{equation}
\label{boundAm1}
\norm{\A(s)^{-1}}_{\mVG\times\mXG \leftarrow \mVG'\times \mXG'}\le C_\sigma \,\dfrac{\abs{s}^2}{\Re  s}.
%\dfrac{\abs{s}^2}{\mathrm{Re } \ s}.
\end{equation}
This gives the result.
\qed
\end{proof}

\begin{remark} \label{rem:norms}
In Lemmas~\ref{lem:EN} and~\ref{lem:absorb} we have $\delta^{1/2} \| \bphi \|_{\bL^2(\Gamma)} \le \| \bphi \|_\mVG$ for all $\bphi\in\mVG$. This implies that for a tangential vector field $\wginc\in \bL^2(\Gamma)$,
\begin{align*}
\norm{\wginc}_{\mVG'} &= \sup_{\| \bphi \|_\mVG = 1} \langle \bphi,\wginc \rangle_\Gamma = \sup_{\| \bphi \|_\mVG = 1} ( \bphi,\wginc )_\Gamma
%=\sup_{\| \bphi \|_\mVG = 1} ( \bphi,\wginc_T )_\Gamma
\\
&\le \sup_{\| \bphi \|_{\bL^2(\Gamma)} \le \delta^{-1/2}} (  \bphi,\wginc )_\Gamma = \delta^{-1/2} \,\norm{\wginc}_{\bL^2(\Gamma)}.
\end{align*}
On the other hand, we have $\| \bphi \|_\mXG \le \| \bphi \|_\mVG$  for all $\bphi\in\mVG$. If  $\wginc$ is in $\mXG$,  we therefore obtain
$$
\norm{\wginc}_{\mVG'} \le \norm{\wginc}_{\mXG}
$$
without any dependence on the small parameter $\delta$. We do have $\wginc\in\mXG$ in the case where
$\wginc = -\widehat{\E}\inc_T -\Z(s)\gaT \widehat{\B}\inc$, cf.~\eqref{eq:def g inc}, for a sufficiently regular boundary $\Gamma$ and
sufficiently regular fields ${\widehat \E}\inc$ and ${\widehat \B}\inc$ for $\Z(s)$ in the situations of Lemmas~\ref{lem:EN} and~\ref{lem:absorb}.
\end{remark}

%When we combine the previous two lemmas with the Lax--Milgram theorem, we obtain the following well-posedness result.
%\begin{theorem} \label{thm:bie-harmonic}
%For every complex $s$ with positive real part, the time-harmonic boundary integral equation \eqref{bie-weak-A} for the tangential traces of the scattered electric and magnetic fields has a unique solution \eqref{phi-psi} in $\mVG\times \mXG$, which is such that the generalized impedance boundary condition \eqref{gibc-weak-harmonic} is satisfyed. For $\Re s \ge \sigma >0$, the tangential traces are bounded by
%$$
%\| \gaT \widehat{\E} \|_\mXG + \| \gaT \widehat{\H} \|_\mVG \le C_\sigma  \| s^2\widehat{g}\inc\|_{\bL^2(\Gamma)},
%$$
%where $C_\sigma$ depends on $\sigma$ but is independent of $s$.
%
%\end{theorem}
\subsection{Well-posedness of time-harmonic scattering with generalized impedance boundary conditions}

Using the above properties, we prove the following result.

\begin{theorem}[Well-posedness of the time-harmonic scattering problem] \label{th:time-harmonic-well-posedness}
For  $\Re s>\sigma_0 \ge 0$, consider the time-harmonic scattering problem \eqref{TH-MW1}--\eqref{TH-MW2} (with the normalization $\eps\mu=1$)
	under the generalized impedance boundary condition \eqref{gibc-weak-harmonic}, with $\Z(s)$ satisfying conditions \eqref{eq:pol_bound}--\eqref{eq:positive_type} and with $\widehat{\g}^\mathrm{inc} \in \mVG'$.

	(a) This problem has a solution
	$(\widehat{\E},\widehat{\B}) \in \H(\curl,\Omega) \times \H(\curl,\Omega)$ given by the
	representation  formulas \eqref{eq:time-harmonic-kirchhoff-E}--\eqref{eq:time-harmonic-kirchhoff-H}. The tangential traces
 are  uniquely determined by the solution
	$(\wvarphi,\wpsi)=(\gaT\widehat{\B},-\gaT \widehat{\E})\in \mVG \times \mXG$
of the system of boundary integral equations of Theorem~\ref{prop:bie}.

	(b) The electromagnetic fields are bounded by
		\begin{align*}
		\| \widehat{\E} \|_{\H(\curl,\Omega)} + \|  \widehat{\B} \|_{\H(\curl,\Omega)}
	\le %\dfrac{C_\sigma}{\mathrm{Re } \ s}
	C_\sigma \dfrac{\abs{s}^3}{(\Re s)^{3/2}}\norm{\widehat{\g}^\mathrm{inc}}_{\mVG'},
	\end{align*}
	where $C_\sigma$ depends on $\sigma$, on $c_\sigma$ of \eqref{eq:positive_type}, and on $\Gamma$ through norms of tangential trace operators, but is independent of $\eps$ and $\mu$ with $\eps\mu=1$ and, in the case of the impedance operators \eqref{eq:thin_layer_ord1}--\eqref{eq:gibc_absorbing_ord2}, independent of the small parameter $\delta$ (but see Remark~\ref{rem:norms}).
\end{theorem}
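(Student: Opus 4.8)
The plan is to assemble the two ingredients already at hand: the well-posedness of the boundary integral equation (Theorem~\ref{prop:bie}), which controls the tangential traces $(\wvarphi,\wpsi)=(\gaT\widehat\B,-\gaT\widehat\E)$ in $\mVG\times\mXG$, and the mapping properties of the potential operators (Lemma~\ref{lem:transmission-0} or equivalently Lemma~\ref{lem:potential-SD-bound}), which reconstruct the fields in $\H(\curl,\Omega)$ from those traces via the representation formulas \eqref{eq:time-harmonic-kirchhoff-E}--\eqref{eq:time-harmonic-kirchhoff-H}. For part~(a), I would first invoke Theorem~\ref{prop:bie} to obtain a unique $(\wvarphi,\wpsi)\in\mVG\times\mXG$ solving \eqref{bie-A}, then \emph{define} the candidate fields by the representation formulas and verify that they solve the scattering problem: the Maxwell equations \eqref{TH-MW1}--\eqref{TH-MW2} in $\Omega$ follow from the identities \eqref{SD-MW} for the potential operators; the outgoing behaviour at infinity is automatic from $G(s,\cdot)$; and the boundary condition \eqref{gibc-weak-harmonic} follows by taking the tangential traces of \eqref{eq:time-harmonic-kirchhoff-E}--\eqref{eq:time-harmonic-kirchhoff-H}, which reproduces the Calder\'on identity \eqref{eq:cald-identity}, rewriting it via \eqref{eq:Bimp_s} as $\Bimp(s)(\wvarphi,\wpsi)^\top=(\gaT\widehat\E,0)^\top$, and matching with the first row of \eqref{bie-weak} together with the definition of $\g\inc$. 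The uniqueness of the traces is precisely the uniqueness statement of Theorem~\ref{prop:bie}.

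For part~(b), the field bound is a two-step composition. First, Theorem~\ref{prop:bie} gives
$$
\left\|\begin{pmatrix}\wvarphi\\\wpsi\end{pmatrix}\right\|_{\mVG\times\mXG}
\le C_\sigma\,\frac{|s|^2}{\Re s}\,\|\wginc\|_{\mVG'}.
$$
Since $\|\cdot\|_{\mXG}\le\|\cdot\|_{\mVG}$, this in particular bounds $(\wvarphi,\wpsi)$ in $\mXG\times\mXG$ by the same quantity. Second, because the scattered fields are the trivially-extended-by-zero solutions of the transmission problem with interior traces vanishing, Lemma~\ref{lem:transmission-0} applies and gives
$$
\left\|\begin{pmatrix}\widehat\E\\\widehat\B\end{pmatrix}\right\|_{\H(\curl,\Omega)^2}
\le\Bigl(\frac{|s|^2+1}{2\,\Re s}\Bigr)^{1/2}\left\|\begin{pmatrix}\wvarphi\\\wpsi\end{pmatrix}\right\|_{\mXG^2}.
$$
Chaining these, and using $|s|^2+1\le 2|s|^2$ for $|s|\ge 1$ (and absorbing the bounded region $|s|\le1$, $\Re s\ge\sigma$, into the constant), produces a bound of order $|s|^{3}(\Re s)^{-3/2}$ on $\|\widehat\E\|_{\H(\curl,\Omega)}+\|\widehat\B\|_{\H(\curl,\Omega)}$, as claimed. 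The $\delta$-independence and $\eps,\mu$-independence are inherited termwise: the constant in Theorem~\ref{prop:bie} is $\delta$-independent by Lemmas~\ref{lem:EN}--\ref{lem:absorb}, the constant in Lemma~\ref{lem:transmission-0} is absolute, and the normalization $\eps\mu=1$ was used throughout so no hidden $\eps,\mu$ remain.

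The main obstacle, such as it is, is not analytic difficulty but bookkeeping: one must be careful that the boundary integral equation \eqref{bie-weak} was derived under the \emph{weak} formulation \eqref{gibc-weak-harmonic} of the impedance condition and with the specific antisymmetrization of the Calder\'on operator in \eqref{eq:Bimp_s}, so the verification in part~(a) that the reconstructed fields satisfy \emph{exactly} the posed boundary condition requires tracing the equivalences \eqref{eq:cald-identity}$\Leftrightarrow$\eqref{eq:Bimp_s}$\Leftrightarrow$first row of \eqref{bie-weak} in the correct direction, i.e.\ showing that a solution of the boundary integral equation yields boundary traces of a genuine Maxwell solution (this uses that the representation formula applied to \emph{any} densities produces a Maxwell solution whose jumps are those densities, by \eqref{jump-rel}, so the Calder\'on relation \eqref{eq:cald-identity} holds by construction and is consistent with \eqref{bie-weak}). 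A secondary point is the choice of which norm to put on $\wginc$ on the right-hand side: the natural estimate lands in $\|\wginc\|_{\mVG'}$, and Remark~\ref{rem:norms} is what one invokes if a $\delta$-robust bound in terms of $\|\wginc\|_{\mXG}$ is wanted instead — hence the parenthetical ``but see Remark~\ref{rem:norms}'' in the statement.
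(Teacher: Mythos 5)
There is a genuine gap in part~(a), and it propagates into part~(b). You write that ``the representation formula applied to any densities produces a Maxwell solution whose jumps are those densities, by \eqref{jump-rel}, so the Calder\'on relation \eqref{eq:cald-identity} holds by construction.'' That is not correct: what holds by construction is the jump-to-average relation \eqref{eq:calderon-jump}, i.e.\ $\Cald(s)$ of the \emph{jumps} gives the \emph{averages}. The identity \eqref{eq:cald-identity} is the specialization of \eqref{eq:calderon-jump} to the case where the interior tangential traces $\gaT^-\widehat\E$ and $\gaT^-\widehat\B$ both vanish, so that jumps and twice-averages coincide with exterior traces. When you plug in densities $(\wvarphi,\wpsi)$ solving \eqref{bie-A}, you a priori only get a transmission-problem solution on $\mathbb{R}^3\setminus\Gamma$ whose jumps equal $(\wvarphi,\wpsi)$; you must still \emph{prove} the interior part vanishes. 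Concretely, rewriting the jump-to-average relation via the definition of $\Bimp(s)$ yields
\[
\Bimp(s)\begin{pmatrix}\wvarphi\\\wpsi\end{pmatrix}=\begin{pmatrix}\gaT^+\widehat\E\\\gaT^-\widehat\B\end{pmatrix},
\]
not $(\gaT\widehat\E,\,0)^\top$. The paper then uses the \emph{second} row of the weak boundary integral equation \eqref{bie-weak} --- which you never invoke --- to deduce $[\bxi,\gaT^-\widehat\B]_\Gamma=0$ for all $\bxi\in\mXG$, hence $\gaT^-\widehat\B=0$ since $\mXG$ is self-dual. That gives $\wvarphi=\gaT^+\widehat\B$ and, via the first row, the boundary condition for the exterior fields. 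A further step is still required: Green's formula \eqref{eq: Green} in $\Omega^-$, combined with $\gaT^-\widehat\B=0$, shows $\bar s\|\widehat\E\|_{\bL^2(\Omega^-)}^2+s\|\widehat\B\|_{\bL^2(\Omega^-)}^2=0$, and taking real parts gives $\widehat\E|_{\Omega^-}=\widehat\B|_{\Omega^-}=0$, hence also $\gaT^-\widehat\E=0$ and $\wpsi=-\gaT^+\widehat\E$.

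Your part~(b) relies on the same unproved claim: you write ``because the scattered fields are the trivially-extended-by-zero solutions of the transmission problem with interior traces vanishing, Lemma~\ref{lem:transmission-0} applies.'' But the fields are \emph{defined} by the representation formula, and vanishing of the interior solution is a conclusion, not a hypothesis. The hypotheses of Lemma~\ref{lem:transmission-0} ($\gaT^-\widehat\E=\gaT^-\widehat\B=0$) are exactly what the two missing steps above establish. Once those are supplied, the chaining of Theorem~\ref{prop:bie} with Lemma~\ref{lem:transmission-0} is carried out just as you describe and gives the claimed $|s|^3(\Re s)^{-3/2}$ bound, so the rest of your argument is sound.
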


%The coercivity property now implies well-posedness of the  time-harmonic Maxwell's equation with the abstract boundary condition \eqref{gibc-weak}. We formulated this result in the Proposition below.
%\begin{proposition}
%\label{freq_ana}
%	Let $\widehat{\E}^{\mathrm{inc}}\in \H(\curl,\Omega)$ be an incident wave solution of the time-harmonic Maxwell's equations with $\wginc\in\mVG'$ as appearing in the formulation \eqref{gibc-weak}. Then, a unique solution $(\wvarphi, \wpsi) \in \mVG \times \mXG$ of the system \eqref{Asoperator} exists, and it is the boundary data to $\widehat{\E}$, the unique solution to the time-harmonic problem \eqref{TH-MW1}--\eqref{TH-MW2} with the boundary condition \eqref{gibc-weak}. Furthermore, the bound
%	\begin{align*}
%	\norm{
%		\begin{pmatrix}\wvarphi\\ \wpsi\end{pmatrix}
%	}_{\mVG\times \mXG}
%	\le %\dfrac{C_\sigma}{\mathrm{Re } \ s}
%	C_\sigma \norm{s^2\wginc}_{\mVG'}
%	\end{align*}
%	holds.
%\end{proposition}

\begin{proof}
%	For this proof we recall the setting of the transmission problem of Lemma~\ref{lem:transmission}: we denote the interior of the bounded scatterer with $\Omega^-$ and the exterior domain with $\Omega^+$, such that $\mathbb{R}^3=\Omega^-\bigcupdot\Gamma \bigcupdot \Omega^+$. Furthermore, $\gaT^-$ and $\gaT^+$ denote the tangential traces on $\Omega^-$ and $\Omega^+$, respectively.
%
%	Let $\chi\in\mVG'$ be arbitrary, and consider the unique solution of the system \eqref{bie-A}, given by
%	\begin{align}\label{eq:weak-with-chi}
%	\begin{pmatrix}
%	\wvarphi \\
%	\wpsi
%	\end{pmatrix} =A(s)^{-1} \begin{pmatrix}
%	\chi \\
%	0
%	\end{pmatrix}.
%	\end{align}
	By Theorem~\ref{prop:bie}, the boundary integral equation \eqref{bie-A} has a unique solution $(\wvarphi,\wpsi)\in \mVG\times\mXG$, which is bounded by  \eqref{phi-psi-bound}.

	We are now in the situation of Lemma~\ref{lem:transmission}: The representation formulas \eqref{eq:time-harmonic-kirchhoff-E3}--\eqref{eq:time-harmonic-kirchhoff-H3} define ${\widehat{\E},\widehat{\B} \in \H(\curl,\mathbb{R}^3\setminus\Gamma)}$, which solve the transmission problem \eqref{eq:transmis-1}--\eqref{eq:transmis-4}.
	Furthermore,  upon expressing $(\wvarphi,\wpsi)$ in terms of $(\widehat{\E},\widehat{\B})$ by means of \eqref{eq:transmis-3}--\eqref{eq:transmis-4}, the fundamental identity \eqref{eq:calderon-jump} of the Calder\'on operator implies the identity
	\begin{align}
	\nonumber
	\Bimp(s)\begin{pmatrix}\wvarphi\\ \wpsi\end{pmatrix}
	&=
	\Cald(s)\begin{pmatrix}\wvarphi\\ \wpsi\end{pmatrix}
	- \dfrac{1}{2}
	\begin{pmatrix}\wpsi\\ \wvarphi\end{pmatrix}
	\\ &
	=
	\begin{pmatrix}
	\avg{\gaT\widehat{\E}} \\ \avg{\gaT\widehat{\B}}
	\end{pmatrix}
	-\dfrac{1}{2}
	\begin{pmatrix}-\jmp{\gaT \widehat \E} \\ \jmp{\gaT \widehat \B}\end{pmatrix}
	=
	\begin{pmatrix}
	\gaT^+\widehat{\E} \\
	\gaT^-\widehat{\B}
	\end{pmatrix}.
	\label{eq:Bimp-reduction}
	\end{align}
	By definition, $(\wvarphi,\wpsi)$ solve the weak formulation \eqref{bie-weak} of the boundary integral equation.
	Using the identity above reduces the weak formulation to
	\begin{alignat}{2}\label{eq:gibc-weak-Eplus}
	\big[\bupsilon,\gaT^+\widehat{\E} \big]_\Gamma +  \big\langle \bupsilon, \Z(s)\wvarphi \big\rangle_\Gamma
	&= \big(\bupsilon,\wginc\big)_\Gamma\qquad &&\text{for all }\ \bupsilon \in \mVG, \\
	\big[\bxi,\gaT^-\widehat{\B}\big]_\Gamma&=0
	\qquad &&\text{for all }\ \bxi \in \mXG.\label{eq:Hmin-zero}
	\end{alignat}
	As $\mXG$ coincides with its own dual, we deduce $\gaT^-\widehat{\B}=0$ and hence $\wvarphi=\gaT^+\widehat{\B}$. Therefore, \eqref{eq:gibc-weak-Eplus} implies that $(\widehat{\E},\widehat{\B})|_{\Omega^+}$ are indeed solutions to the time-harmonic Maxwell's equations which satisfy the generalized impedance boundary condition \eqref{gibc-weak-harmonic}.

	Green's formula \eqref{eq: Green} for the solution $\widehat{\E}|_{\Omega^-}$ in the interior domain $\Omega^-$ of \eqref{TH-MW1}--\eqref{TH-MW2} yields
	\begin{equation*}
	\int_{\Omega^-} \bar s\big|\widehat{\E}\big|^2 +s \big|\widehat{\B}\big|^2 \textrm{d} x = -\int_\Gamma (\gaT^- \widehat{\B} \times \normal) \cdot \gaT^- \widehat{\E} \,\textrm{d}\sigma =0,
	\end{equation*}
	which, after taking the real part, gives $\widehat{\E}|_{\Omega^-}=\widehat{\B}|_{\Omega^-}=0 $ and therefore $\gaT^- \widehat{\E} =0$ and $\wpsi=-\gaT^+\widehat{\E}$. This completes the proof of part (a).

With $\gaT^- \widehat{\E} =0$ and $\gaT^-\widehat{\B}=0$ as shown,
we are now in the situation of Lemma~\ref{lem:transmission-0}, which together with the bound of Theorem~\ref{prop:bie} yields the bound of part (b) of the theorem.
	\qed
\end{proof}

\begin{remark} In view of the $L^2$ bound of Lemma~\ref{lem:transmission-0}, we further have the $L^2$ bound
\begin{equation}\label{EH-L2-bound}
		\| \widehat{\E} \|_{\bL^2(\Omega)} + \|  \widehat{\B} \|_{\bL^2(\Omega)}
	\le
	C_\sigma \dfrac{\abs{s}^2}{(\Re s)^{3/2}}\norm{\widehat{\g}^\mathrm{inc}}_{\mVG'}.
\end{equation}
\end{remark}

%
%
%We note that the argument structure following \eqref{eq:outer-Green} implies an alternate version of the operator bound in the weaker $L^2$-norm, which provides a stronger bound in terms of the Laplace parameter $s$, and reads
%\begin{align}\label{bound-complete-L2}
%\norm{(\mathcal S (s),\mathcal D(s))A(s)^{-1}}_{L^2(\Omega)^3\leftarrow \mVG'\times\{0\}}
%\le C_\sigma\dfrac{\abs{s}^{2}}{(\Re s)^{3/2}}.
%\end{align}
%
%

\subsection{Bounds for the time-harmonic potential operators away from the boundary}
\label{section:coercivity and boundedness of Calderon op}
Point evaluations of the potential operators are bounded by means of the following lemma, which yields a more favourable dependence on $s$ for large $\Re s$ than the $\H(\curl,\R^3\setminus\Gamma)$-norm bound of Lemma~\ref{lem:potential-SD-bound}.
On smooth domains similar pointwise bounds already exist, obtained with more straightforward techniques; see \cite[Theorem~4.4~(c)]{BBSV13} for the single layer operator. The proof of the following lemma generalizes the idea given there to the more technical situation of non-smooth boundaries.
\begin{lemma}
	\label{lemma:bound for evaluations}
	The single and double layer potential operators $\mathcal{S}(s),\mathcal{D}(s)$ evaluated at a point $\x\in\mathbb{R}^3\setminus{\Gamma}$ with $d=\dist(\x,\Gamma)>0 $ satisfy the following bounds:
	\begin{align*}
	\abs{\left(\mathcal S (s)\bvar\right) (\x)} &\le C \abs{s}^2e^{-d \Re s} \norm{\bvar}_{\mXG}, \\
	\abs{\left(\mathcal D (s)\bvar\right) (\x)} &\le C \abs{s}^2 e^{-d \Re s} \norm{\bvar}_{\mXG},
	\end{align*}
	for $\Re s\ge \sigma >0$, and for any $\bvar \in \mXG$. The constant $C$ depends only on $\sigma,\x$ and $\Gamma$.
\end{lemma}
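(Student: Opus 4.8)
The plan is to evaluate the potentials \emph{directly} at the point $\x$: since $d=\dist(\x,\Gamma)>0$, every occurrence of $G(s,\x-\y)$ and of its $\x$-derivatives is then a classically defined, smooth function of $\y\in\Gamma$ carrying the factor $e^{-s|\x-\y|}$ with $|\x-\y|\ge d$, and this factor is the sole source of the decay $e^{-d\Re s}$, while the algebraic factor $|s|^2$ will come from $\y$-derivatives of the kernel. It suffices to prove both estimates for densities $\bvar$ in a dense regular subspace of $\mXG$ and conclude by density (the bound being the norm of the point-evaluation functional $\mathcal S(s)\,\cdot\,(\x)\colon\mXG\to\C^3$, and likewise for $\mathcal D$). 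For such $\bvar$ and a unit vector $\a\in\C^3$, I would first rewrite the Cartesian components $\a\cdot(\mathcal S(s)\bvar)(\x)$ and $\a\cdot(\mathcal D(s)\bvar)(\x)$ as boundary pairings. Using that $\bvar$ is tangential, the scalar triple-product identity, and $\mathcal D(s)\bvar(\x)=\int_\Gamma\nabla_\x G(s,\x-\y)\times\bvar(\y)\,\textrm{d}\sigma(\y)$, one finds tangential fields $\bphi^{S}_\a,\bphi^{D}_\a$ on $\Gamma$, built from $G(s,\x-\cdot)$, $\nabla_\x G(s,\x-\cdot)$ and $\normal$ (schematically $\bphi^{S}_\a=\pm\,\overline{G(s,\x-\cdot)}\,(\normal\times\a)$ and $\bphi^{D}_\a=\pm\,\normal\times(\a\times\nabla_\x\overline G(s,\x-\cdot))$), such that
\begin{align*}
\a\cdot(\mathcal S(s)\bvar)(\x) &= -s\,[\bphi^{S}_\a,\bvar]_\Gamma + s^{-1}\bigl\langle \overline{\a\cdot\nabla_\x G(s,\x-\cdot)},\,\divG\bvar\bigr\rangle_{H^{1/2}(\Gamma)\times H^{-1/2}(\Gamma)},\\
\a\cdot(\mathcal D(s)\bvar)(\x) &= [\bphi^{D}_\a,\bvar]_\Gamma .
\end{align*}

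The core of the proof is then to bound the $\mXG$-norms of $\bphi^{S}_\a,\bphi^{D}_\a$ and the $H^{1/2}(\Gamma)$-norm of $\a\cdot\nabla_\x G(s,\x-\cdot)$. Using the characterization of $\mXG$ as the tangential subspace of $\H^{-1/2}(\Gamma)$ with surface divergence in $H^{-1/2}(\Gamma)$, together with the continuous embeddings $\bL^2(\Gamma)\hookrightarrow\H^{-1/2}(\Gamma)$ and $L^2(\Gamma)\hookrightarrow H^{-1/2}(\Gamma)$, it is enough to control the $\bL^2(\Gamma)$-norms of $\bphi^{S}_\a,\bphi^{D}_\a$ and of their surface divergences. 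The $\bL^2$-norms of $\bphi^{S}_\a$ and $\bphi^{D}_\a$ are bounded by $\|G(s,\x-\cdot)\|_{L^\infty(\Gamma)}$ resp.\ $\|\nabla_\x G(s,\x-\cdot)\|_{L^\infty(\Gamma)}$; since $r\mapsto e^{-\Re s\,r}/r$ and its derivatives are monotonically decreasing and $|\x-\y|\ge d$, these are $\le C(\x,\Gamma)\,e^{-d\Re s}$ resp.\ $\le C(\x,\Gamma)(|s|+1)\,e^{-d\Re s}$. For the surface divergences I would use that $\bphi^{S}_\a,\bphi^{D}_\a$ are of the form $\normal\times\F$ with $\F$ real-analytic in a neighbourhood of $\Gamma$ (because $\x\notin\Gamma$), so that a surface identity of the form $\divG(\normal\times\F)=\pm\,\normal\cdot\curl\F$ holds on the Lipschitz surface $\Gamma$ (in the sense of the duality definition of $\divG$); this trades one surface derivative of $\bphi^{\bullet}_\a$ for one more $\y$-gradient of the kernel, giving $\|\divG\bphi^{S}_\a\|_{L^2(\Gamma)}\le C(|s|+1)e^{-d\Re s}$ and $\|\divG\bphi^{D}_\a\|_{L^2(\Gamma)}\le C(|s|^2+1)e^{-d\Re s}$. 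Finally $\|\a\cdot\nabla_\x G(s,\x-\cdot)\|_{H^{1/2}(\Gamma)}$ is bounded by the Lipschitz norm of this function on $\Gamma$, hence by $\sup_\Gamma(|\nabla_\x G|+|\nabla_\y\nabla_\x G|)\le C(|s|^2+1)e^{-d\Re s}$, using that a compact Lipschitz surface is quasiconvex.

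Combining these bounds and using $\Re s\ge\sigma>0$ (so that $1\le|s|/\sigma$) gives $\sup_{|\a|=1}\|\bphi^{S}_\a\|_\mXG\le C_\sigma|s|\,e^{-d\Re s}$, $\sup_{|\a|=1}\|\bphi^{D}_\a\|_\mXG\le C_\sigma|s|^2\,e^{-d\Re s}$, and $\sup_{|\a|=1}\|\a\cdot\nabla_\x G(s,\x-\cdot)\|_{H^{1/2}(\Gamma)}\le C_\sigma|s|^2 e^{-d\Re s}$. Inserting these into the two pairing identities, estimating $[\bphi^{\bullet}_\a,\bvar]_\Gamma$ by $\|\bphi^{\bullet}_\a\|_\mXG\|\bvar\|_\mXG$ and $\langle\,\cdot\,,\divG\bvar\rangle$ by $\|\cdot\|_{H^{1/2}(\Gamma)}\|\divG\bvar\|_{H^{-1/2}(\Gamma)}\le\|\cdot\|_{H^{1/2}(\Gamma)}\|\bvar\|_\mXG$, and taking the supremum over $|\a|=1$, yields $|(\mathcal S(s)\bvar)(\x)|\le C_\sigma|s|^2 e^{-d\Re s}\|\bvar\|_\mXG$ and $|(\mathcal D(s)\bvar)(\x)|\le C_\sigma|s|^2 e^{-d\Re s}\|\bvar\|_\mXG$ with $C_\sigma=C(\sigma,\x,\Gamma)$; density then finishes the proof. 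The main obstacle is the rough-boundary bookkeeping behind the last paragraph: on a merely Lipschitz $\Gamma$ the unit normal is only $L^\infty$, so the identity $\divG(\normal\times\F)=\pm\,\normal\cdot\curl\F$ and the membership $\bphi^{\bullet}_\a\in\mXG$ must be justified through the weak (duality) definitions of the surface differential operators, and the intrinsic $H^{1/2}(\Gamma)$ estimate of the kernel cannot be obtained by naively integrating its gradient along chords (which may pass closer to $\x$), so one must invoke quasiconvexity of $\Gamma$. On a smooth boundary all of this is classical, as in the single-layer bound of \cite[Theorem~4.4(c)]{BBSV13}.
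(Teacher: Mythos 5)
Your argument is correct and yields the stated powers of $\abs{s}$ and the factor $e^{-d\Re s}$; it shares the paper's central idea (reduce the point value, for a density in a dense regular subspace, to a boundary duality pairing of $\bvar$ against traces of the smooth kernel $G(s,\x-\cdot)$, exploiting that $\bvar$ is tangential, and conclude by density), but it realizes the key norm estimates differently. The paper writes $\ej\cdot\bvar=(\ej\times\normal)\cdot(\bvar\times\normal)$, recognizes the resulting factor as $\gaT\bigl(G(s,\x-\cdot)\ej\bigr)$, and then invokes the tangential trace theorem of Buffa--Costabel--Sheen on the \emph{complementary} domain $\R^3\setminus\Omega$ (where the kernel is smooth, since $\x\in\Omega$), so that everything is bounded by volume norms $\|G(s,\x-\cdot)\|_{H^1(\R^3\setminus\Omega)}$, resp.\ $\|G\|_{H^2}$ for the gradient term and $\|\partial_{x_i}G\|_{H^1}$ for the double layer (which is treated by differentiating under the integral rather than via your cross-product identity). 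You instead bound the $\mXG$-norms of the kernel traces intrinsically on $\Gamma$, via $\bL^2(\Gamma)$ estimates plus the identity $\divG(\normal\times\F)=\pm\,\normal\cdot\curl\F$ and the embeddings $\bL^2\hookrightarrow\H^{-1/2}$, and you bound the $H^{1/2}(\Gamma)$-norm of $\a\cdot\nabla_\x G$ through the Lipschitz norm on $\Gamma$ (quasiconvexity); as you note, on a merely Lipschitz boundary these steps require the weak (duality) definitions of the surface operators, and this is exactly the bookkeeping that the paper's route avoids by outsourcing it to the known trace theorem, at the price of spending one extra volume Sobolev order. Both routes give the same exponential decay and the $\abs{s}^2$ bound (your treatment of the $s^{-1}$-term of the single layer is even slightly sharper, $O(\abs{s})$, consistent with the paper); the $H^{1/2}$ estimate could also be obtained without quasiconvexity by bounding the surface gradient pointwise by the ambient gradient on $\Gamma$, but your argument as stated is sound.
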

\begin{proof}
	Let $\ej$ denote the $j$-th unit vector in $\mathbb{R}^3$, and let $\x\in\Omega$ with $d=\mathrm{dist}(\x,\Gamma)>0$. We then start by analysing the corresponding component of the integral
	\begin{align*}
	\abs{\ej\cdot \int_{\Gamma}G(s,\x-\y) \bvar(\y)\text{d}\y} &=
	\abs{ \int_{\Gamma}G(s,\x-\y)\ej\cdot\bvar(\y)\text{d}\y }
	\\&=
	\abs{ \int_{\Gamma}G(s,\x-\y) \left(\ej\times \normal\right)\cdot\left(\bvar(\y)\times\normal\right)\text{d}\y }
	\\&\le C
	\norm{\gaT\left(G(s,\x-\cdot) \ej\right) }_{\mXG}\norm{\bvar}_{\mXG}
	\\&\le C
	\norm{G(s,\x-\cdot) \ej }_{\H(\curl,\mathbb R^3 \setminus\Omega)}\norm{\bvar}_{\mXG}
	\\&\le C
	\norm{G(s,\x-\cdot)  }_{H^1(\R^3\setminus \Omega)}\norm{\bvar}_{\mXG},
	\end{align*}
	where the estimate on the trace holds due to \cite[Theorem~4.1]{BCS02}.
	The second summand of the single layer operator is estimated more straightforwardly, as
	\begin{align*}
	\abs{\nabla \int_\Gamma G(s,\x-\y)\divG\bvar(\y)\text{d}\y}
	&\le \norm{\nabla G(s,\x-\cdot)}_{\H^{1/2}(\Gamma)}\norm{\divG \bvar(\y)}_{H^{-1/2}(\Gamma)}\\
	&\le \norm{ G(s,\x-\cdot)}_{H^{2}(\R^3\setminus \Omega)}\norm{\bvar}_{\mXG}.
	\end{align*}

	We estimate the double layer potential similarly to the first summand of the single layer, by taking a partial derivative with respect to a coordinate $x_i$ and obtain
	\begin{align*}
	\abs{\partial_{x_i} \ej\cdot \int_{\Gamma}G(s,\x-\y) \bvar(\y)\text{d}\y} &=\abs{ \ej\cdot \int_{\Gamma}\partial_{x_i}G(s,\x-\y) \bvar(\y)\text{d}\y}
	\\ &\le\norm{\partial_{x_i}G(s,\x-\cdot)  }_{H^1(\R^3\setminus\Omega)}\norm{\bvar}_{\mXG}.
	\end{align*}
	Since the $\curl$ operator is a linear combination of partial derivatives, this estimate implies the stated bound for the double layer potential operator.
	%The estimate of the single layer potential was already shown in \cite{BBSV13}, and the estimate for the double layer potential is shown here, using similar techniques. Starting with the definition of the double layer potential, we directly obtain, for $\x \in \R^3 \setminus \Gamma$,
	%\begin{align*}
	%	\abs{\mathcal D (s)\bvar (\x)}
	%	&= \abs{ \curl_\x \int_\Gamma G(s,\x-\y)\bvar(\y)\text{d}\y } \\
	%	&= \abs{ \left(\curl_\x G(s,\x-\y), \bvar(\y)\right)_{L^2(\Gamma)}}\\
	%	&\leq \norm{\curl_\x G(s,\x-\y)}_{\H^{1/2}(\Gamma)}\norm{\bvar}_{\H^{-1/2}(\Gamma)} \\
	%	&\leq \norm{ G(s,\x-\y)}_{\H^{2}(\Gamma)}\norm{\bvar}_{\mXG}
	%	\leq
	%	 C \abs{s}^2 e^{-\sigma d} \norm{\bvar}_{\mXG}.
	%\end{align*}
	\qed
\end{proof}

The proof of the above result immediately implies the following extension for any spatial differential operator.
%We note that as the last estimate holds true, we can be bound all space derivatives of the solution field at a given point $\x\in\Omega$.
In particular it implies that, given traces $\wvarphi,\wpsi\in\mXG$, the corresponding (time-harmonic) solution field $\widehat{\E}=-\mathcal S(s)\wvarphi+\mathcal D(s)\wpsi$ is smooth in every point $\x\in\Omega\setminus \Gamma$.

\begin{lemma}\label{lemma:bound for derivatives}
	For every positive integer $k$ and for $j=1,2,3$, we have the following bounds at $\x\in\mathbb{R}^3\setminus{\Gamma}$ with $d=\dist(\x,\Gamma)>0 $ for $\Re s\ge \sigma >0$:
	\begin{equation*}
	\begin{aligned}
	\abs{\left(\partial_{x_j}^k \mathcal S (s)\bvar\right)\! (\x)} &\le C \abs{s}^{2+k} e^{- d \Re s} \norm{\bvar}_{\mXG}, \\
	\abs{\left(\partial_{x_j}^k  \mathcal D (s)\bvar \right) \!(\x)} &\le C \abs{s}^{2+k} e^{- d\Re s} \norm{\bvar}_{\mXG},
	\end{aligned}
	\qquad \text{for all $\bvar \in \mXG$.}
	\end{equation*}
 The constant $C$ depends only on $k,\sigma,\x$ and $\Gamma$. 
\end{lemma}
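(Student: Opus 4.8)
The plan is to re-run the argument from the proof of Lemma~\ref{lemma:bound for evaluations}, now applying the spatial derivative $\partial_{x_j}^k$ directly to the kernel $G(s,\x-\cdot)$ before taking traces. Concretely, for the single layer potential write
\[
\partial_{x_j}^k \mathcal S(s)\bvar(\x) = -s\int_\Gamma \partial_{x_j}^k G(s,\x-\y)\bvar(\y)\,\mathrm{d}\y + s^{-1}\nabla\int_\Gamma \partial_{x_j}^k G(s,\x-\y)\divG\bvar(\y)\,\mathrm{d}\y,
\]
and for each term estimate the $\y$-integral by the $\mXG$-duality pairing (for the first term) or the $H^{-1/2}(\Gamma)$--$H^{1/2}(\Gamma)$ pairing (for the second term), exactly as in Lemma~\ref{lemma:bound for evaluations}. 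This reduces everything to bounding Sobolev norms over $\R^3\setminus\Omega$ of $\partial_{x_j}^k G(s,\x-\cdot)$ and its gradient. The analogous reduction works for $\mathcal D(s)$ since $\curl$ is a linear combination of first-order partial derivatives, so $\partial_{x_j}^k\mathcal D(s)$ is a linear combination of $(k+1)$-st order derivatives of $G$ integrated against $\bvar$.

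The remaining task is the kernel estimate: for $\x$ with $d=\dist(\x,\Gamma)>0$, and for any multi-index $\alpha$ of order $|\alpha|\le m$ (here $m = k+2$ suffices to cover the $H^2$-type norms that appeared in the base case after applying $\partial_{x_j}^k$), one has
\[
\bigl\| \partial^\alpha G(s,\x-\cdot)\bigr\|_{L^2(\R^3\setminus\Omega)} \le C\,|s|^{|\alpha|}\,e^{-d\,\Re s},\qquad \Re s\ge\sigma>0,
\]
with $C$ depending only on $\sigma$, $\x$, $\Gamma$ (and $\alpha$). This follows because $G(s,\z)=e^{-s|\z|}/(4\pi|\z|)$, and each derivative in $\z$ produces at worst one extra factor of $s$ together with negative powers of $|\z|$; since $|\x-\y|\ge d>0$ for $\y\in\R^3\setminus\Omega$, the singular factors $|\x-\y|^{-p}$ are harmless and integrable over the relevant region, while the modulus satisfies $|e^{-s|\x-\y|}| = e^{-|\x-\y|\Re s}\le e^{-d\,\Re s}$ for all such $\y$. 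One integrates this bound over the bounded region $\R^3\setminus\Omega$ to obtain a finite constant; an entirely parallel computation handles the $H^{1/2}(\Gamma)$ and $H^2$ norms of the kernel by also controlling one further derivative.

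The main obstacle — though it is more bookkeeping than genuine difficulty — is to track the exponents carefully: one must verify that applying $\partial_{x_j}^k$ inside the representation formula for $\mathcal S(s)$, which already carries an explicit factor $s$ (and $s^{-1}$ on the second term), combined with the $|s|^{|\alpha|}$ from each kernel derivative and the trace-estimate structure of Lemma~\ref{lemma:bound for evaluations}, produces exactly the exponent $|s|^{2+k}$ claimed, and likewise $|s|^{2+k}$ for $\mathcal D(s)$ since $\mathcal D(s)\bvar = \curl\int_\Gamma G(s,\x-\y)\bvar(\y)\,\mathrm{d}\y$ has no explicit $s$ prefactor but the $\curl$ contributes one derivative and then $\partial_{x_j}^k$ contributes $k$ more, giving $k+1$ kernel derivatives, i.e.\ $|s|^{k+1}$, multiplied by the $|s|$-loss inherent in the trace estimate as in the base case. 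Since all the functional-analytic ingredients (the trace bound of \cite[Theorem~4.1]{BCS02}, the $H^{-1/2}$--$H^{1/2}$ duality on $\Gamma$, and the embeddings used in Lemma~\ref{lemma:bound for evaluations}) are already in place, and the kernel estimate is elementary once $d>0$ is fixed, the proof is a direct extension; for this reason the statement of the lemma notes only that ``the proof of the above result immediately implies'' it, and a one-line argument of the above form is all that is required.

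\begin{proof}
The representation formulas for $\mathcal S(s)$ and $\mathcal D(s)$ allow the spatial derivatives to be moved onto the fundamental solution $G(s,\x-\cdot)$, whose singularity lies at distance $d>0$ from $\Gamma$. Differentiating $G(s,\z)=e^{-s|\z|}/(4\pi|\z|)$ in $\z$ produces at most one extra factor of $s$ per derivative, while the exponential contributes the decay: for $|\z|\ge d$ one has $|e^{-s|\z|}|\le e^{-d\Re s}$. Repeating the estimates in the proof of Lemma~\ref{lemma:bound for evaluations} with $G(s,\x-\cdot)$ replaced by $\partial_{x_j}^k G(s,\x-\cdot)$ (and, for $\mathcal D(s)$, by the relevant $(k+1)$-st order derivatives coming from $\curl$ composed with $\partial_{x_j}^k$), and tracking the explicit powers of $s$ in the potential operators, yields precisely the factor $|s|^{2+k}e^{-d\Re s}$ in both bounds, with a constant depending only on $k$, $\sigma$, $\x$ and $\Gamma$.
\qed
\end{proof}
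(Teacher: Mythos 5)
Your proposal is correct and follows exactly the route the paper has in mind: move the $\partial_{x_j}^k$ onto the fundamental solution, rerun the trace/duality estimates of Lemma~\ref{lemma:bound for evaluations} verbatim with $\partial_{x_j}^k G$ in place of $G$, and observe that each kernel derivative costs at most one factor of $|s|$ while $|e^{-s|\x-\y|}|\le e^{-d\Re s}$ supplies the exponential decay over the bounded region $\R^3\setminus\Omega$. Your exponent bookkeeping (explicit $s$ and $s^{-1}$ prefactors in $\mathcal S$, the extra derivative from $\curl$ in $\mathcal D$, and the additional derivative hidden in the $H^1$/$H^2$ norms that arise from the trace estimate) correctly reproduces $|s|^{2+k}$ in both cases, matching the paper's intended one-line extension.
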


The following lemma gives a  bound of the potential operators in the operator norm from $\mXG$ to $\H(\curl,\Omega_d)$, where the boundary of $\Omega_d\subset \Omega$ has distance $d$ to $\Gamma$.
\begin{lemma}
	\label{lemma:bound for norms - away}
	Let $\Omega_d= \{\x\in\Omega \mid  \dist(\x,\Gamma )> d\} $ be the domain away from the boundary by at least some fixed distance $d>0$. Then,
	the single and double layer potential operators $\mathcal{S}(s),\mathcal{D}(s)$  satisfy the following bounds:
	\begin{align*}
	\norm{\mathcal S (s)}_{ \H(\curl,\Omega_d)\leftarrow\mXG} &\le Ce^{-d \Re s} \abs{s}^3 , \\
	\norm{\mathcal D (s) }_{\H(\curl,\Omega_d)\leftarrow\mXG } &\le  C e^{-d \Re s} \abs{s}^3  ,
	\end{align*}
	for $\Re s\ge \sigma >0$.  The constant $C$ depends only on $d,\Gamma$ and $\sigma$. 
\end{lemma}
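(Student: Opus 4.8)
The plan is to bound the $\bL^2(\Omega_d)$-norms of $\mathcal{S}(s)\bvar$ and $\mathcal{D}(s)\bvar$ first, and then to upgrade these to $\H(\curl,\Omega_d)$-bounds by means of the Maxwell relations \eqref{SD-MW}. Indeed, on $\R^3\setminus\Gamma$, hence on $\Omega_d$, these relations read $\curl\,\mathcal{S}(s)=-s\,\mathcal{D}(s)$ and $\curl\,\mathcal{D}(s)=s\,\mathcal{S}(s)$, so once one has shown
\[
\norm{\mathcal{S}(s)\bvar}_{\bL^2(\Omega_d)} + \norm{\mathcal{D}(s)\bvar}_{\bL^2(\Omega_d)} \le C\,\abs{s}^2\,e^{-d\Re s}\,\norm{\bvar}_{\mXG}, \qquad \Re s\ge\sigma,
\]
it follows that $\norm{\curl\,\mathcal{S}(s)\bvar}_{\bL^2(\Omega_d)}=\abs{s}\,\norm{\mathcal{D}(s)\bvar}_{\bL^2(\Omega_d)}\le C\abs{s}^3 e^{-d\Re s}\norm{\bvar}_{\mXG}$, and analogously for $\mathcal{D}(s)$; adding the squared contributions of the field and its $\curl$, and using $\abs{s}\ge\Re s\ge\sigma$ to absorb the lower power of $\abs{s}$, gives the asserted bounds $C\,e^{-d\Re s}\abs{s}^3$.

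To obtain the $\bL^2(\Omega_d)$-bounds I would reopen the proof of Lemma~\ref{lemma:bound for evaluations}, but keep track of the dependence on the evaluation point $\x$ instead of discarding it at the end. For $\x\in\Omega_d$ one has $\dist(\x,\R^3\setminus\Omega)=\dist(\x,\Gamma)\ge d$, so $\abs{\x-\y}\ge d$ for every $\y$ in the bounded region $\R^3\setminus\Omega$ over which the Sobolev norms of $G(s,\x-\cdot)$ are taken in that proof. On $\{\abs{\x-\y}\ge d\}$ each spatial derivative of the fundamental solution contributes a factor $C_d(1+\abs{s})\abs{\x-\y}^{-1}$, so, retaining one factor $\abs{\x-\y}^{-1}e^{-\Re s\abs{\x-\y}}$ for integrability, the estimates of that proof yield, for both potentials and for $\x\in\Omega_d$,
\[
\abs{(\mathcal{S}(s)\bvar)(\x)}^2 + \abs{(\mathcal{D}(s)\bvar)(\x)}^2 \le C\,(1+\abs{s})^{4}\,\biggl(\int_{\R^3\setminus\Omega}\frac{e^{-2\Re s\abs{\x-\y}}}{\abs{\x-\y}^2}\,\text{d}\y\biggr)\,\norm{\bvar}_{\mXG}^2 ,
\]
with $C$ depending only on $d$ and $\Gamma$; this is exactly the pointwise estimate of Lemma~\ref{lemma:bound for evaluations} written before the crude bound $\abs{\x-\y}\ge d$ is inserted under the integral.

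It then remains to integrate this majorant over the \emph{unbounded} domain $\Omega_d$. By Fubini's theorem,
\[
\int_{\Omega_d}\!\int_{\R^3\setminus\Omega}\frac{e^{-2\Re s\abs{\x-\y}}}{\abs{\x-\y}^2}\,\text{d}\y\,\text{d}\x = \int_{\R^3\setminus\Omega}\biggl(\int_{\Omega_d}\frac{e^{-2\Re s\abs{\x-\y}}}{\abs{\x-\y}^2}\,\text{d}\x\biggr)\text{d}\y ,
\]
and for each fixed $\y\in\R^3\setminus\Omega$ the inner integral is, using $\Omega_d\subset\{\abs{\x-\y}\ge d\}$ and polar coordinates, bounded by $\int_{\abs{\x-\y}\ge d}\abs{\x-\y}^{-2}e^{-2\Re s\abs{\x-\y}}\,\text{d}\x = 4\pi\int_d^{\infty} e^{-2\Re s\rho}\,\text{d}\rho = \tfrac{2\pi}{\Re s}\,e^{-2d\Re s}$, uniformly in $\y$; since $\R^3\setminus\Omega$ has finite measure this contributes a factor $\le C(\Gamma)(\Re s)^{-1}e^{-2d\Re s}$. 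Combining this with the pointwise bound above, and absorbing $(\Re s)^{-1/2}\le\sigma^{-1/2}$ into the constant for $\Re s\ge\sigma$, gives the desired $\bL^2(\Omega_d)$-bounds, and hence, via \eqref{SD-MW} as explained, the stated $\H(\curl,\Omega_d)$-bounds, with a constant depending only on $d$, $\Gamma$ and $\sigma$.

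The only genuine obstacle is this last step. Because $\Omega_d$ is unbounded, the pointwise estimates of Lemmas~\ref{lemma:bound for evaluations} and~\ref{lemma:bound for derivatives} cannot be used as black boxes: as stated, their bound is uniform over $\Omega_d$ but exhibits no decay as $\abs{\x}\to\infty$, so squaring and integrating it over $\Omega_d$ diverges. One therefore has to keep the kernel $\abs{\x-\y}^{-1}e^{-\Re s\abs{\x-\y}}$ explicit throughout and exploit the Fubini reduction to a one–dimensional radial integral, which both converges and regenerates the factor $e^{-2d\Re s}$. Everything else is the same bookkeeping of the differentiated fundamental solution already carried out in the proof of Lemma~\ref{lemma:bound for evaluations}.
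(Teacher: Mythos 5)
Your proof is correct but takes a more elaborate route than the paper's. The paper's proof is to integrate the pointwise bounds of Lemma~\ref{lemma:bound for derivatives} (with $k=0$ and $k=1$, the latter controlling $\curl$) squared over $\Omega_d$, keeping the $\x$-dependent factor $e^{-2\dist(\x,\Gamma)\Re s}$ inside the integral; the integral $\int_{\Omega_d}e^{-2\dist(\x,\Gamma)\Re s}\,\mathrm d\x$ is finite and decays like $e^{-2d\Re s}$, since $\dist(\x,\Gamma)\to\infty$ as $\abs{\x}\to\infty$ while the level sets of $\dist(\cdot,\Gamma)$ have only polynomially growing area. You instead first prove $\bL^2(\Omega_d)$-bounds on the potentials and then upgrade to $\H(\curl,\Omega_d)$ via the Maxwell relations $\curl\,\mathcal S(s)=-s\,\mathcal D(s)$, $\curl\,\mathcal D(s)=s\,\mathcal S(s)$, a tidy observation that makes Lemma~\ref{lemma:bound for derivatives} dispensable here, and you handle the unboundedness of $\Omega_d$ by Fubini against the explicit kernel $\abs{\x-\y}^{-2}e^{-2\Re s\abs{\x-\y}}$. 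Both routes work; the paper's argument is shorter, while yours is more explicit about the $\x$-dependence and where the exponential decay regenerates.

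Your final paragraph, though, slightly misdiagnoses the paper's route. You claim that the pointwise bounds of Lemmas~\ref{lemma:bound for evaluations}--\ref{lemma:bound for derivatives} "exhibit no decay as $\abs{\x}\to\infty$", but in those lemmas $d$ is defined as $\dist(\x,\Gamma)$, which grows with $\abs{\x}$, so the bound $C\abs{s}^2 e^{-d\Re s}$ does decay at infinity; divergence would only arise if one prematurely replaced $\dist(\x,\Gamma)$ by its infimum over $\Omega_d$, which the paper does not do. The genuine (small) point that a black-box reading glosses over is that the constant $C$ is allowed to depend on $\x$, and one must check from the proof of Lemma~\ref{lemma:bound for evaluations} that it is uniform over $\Omega_d$: the relevant $H^2(\R^3\setminus\Omega)$-norms of $G(s,\x-\cdot)$ are taken over the bounded set $\R^3\setminus\Omega$ and are controlled through $\dist(\x,\Gamma)$ alone, with constants independent of $\x$. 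With that noted, the paper's direct integration of the pointwise bounds goes through without the Fubini rearrangement.
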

\begin{proof}
	To show the bound for the  single  layer potential, we start with the square of the $\H(\curl,\Omega)$-norm of an image of the  single  layer potential and employ the bounds from Lemma \ref{lemma:bound for derivatives}:
	\begin{align*}
	\norm{\left(\mathcal S (s)\bvar\right) (\x)}^2_{\H(\curl,\Omega_d)}
	&=
	\int_{\Omega_d} \! \abs{\left(\mathcal S (s)\bvar\right) (\x)}^2
	\! + \abs{\left(\curl \mathcal S (s)\bvar\right) (\x)}^2\text{d} \x
	\\&\le
	C_\sigma\norm{\bvar}^2_{\mXG}\abs{s}^6\int_{\Omega_d} e^{-2\dist(\x,\Gamma)\Re s}\text{d} \x
	.
	\end{align*}
	Estimating the last integral then yields the stated result for $\mathcal S (s)$. The result for $\mathcal D (s)$ is obtained by the same argument.
%	To estimate the integral on the right-hand side, we let $R>0$ be large enough, such that the ball $B_R$ with radius $R$ encloses the scatterer $\Gamma\subset B_R$, and its complement is a subset of $\Omega_d$. Then, directly separating the integral yields
%	\begin{align*}
%	&\int_{\Omega_d} \! e^{-2\dist(\x,\Gamma)\Re s}\text{d}\x
%	=
%	\int_{\Omega_d \cap B_R} \! e^{-2\dist(\x,\Gamma)\Re s} \text{d}\x+ \!
%	\int_{\Omega_d \setminus B_R}\!\! e^{-2\dist(\x,\Gamma)\Re s} \text{d}\x
%	\\&
%	\qquad\qquad\le
%	\int_{\Omega_d \cap B_R}e^{-2 d \Re s}\text{d}\x + e^{-2d\Re s}
%	\int_{{\Omega_d \setminus B_R}}e^{-2(\norm{\x}-R)\Re s}\text{d}\x
%	\\ &
%	\qquad\qquad\le
%	\dfrac{4}{3}\pi R^3e^{-2d\Re s}
%	+e^{-2d\Re s}4\pi\int _R^\infty  r^2e^{-2\left(r-R\right)\Re s} \text{d}r
%	\\ &
%	\qquad\qquad\le e^{-2d\Re s} \left(
%	\dfrac{4}{3}\pi R^3
%	+ 4\pi \dfrac{2\sigma^2R^2+2\sigma R+ 1}{4\sigma^3}\right).
%	\end{align*}
%	Taking the square root on both sides now yields the stated result.
	\qed
\end{proof}

\section{Time-dependent Maxwell's equations with generalized impedance boundary conditions}
\label{section:time dependent Maxwell with GIBC}

The time-harmonic treatment of the previous section extends to the time domain in a direct way, via the passage from the Laplace domain to the time domain  described in Section~\ref{subsec:Z}  (which follows \cite{L94}). This uses the frequency-explicit estimates of Section~\ref{section:time harmonic Maxwell} in an essential way. 
 We start from the time-dependent version of the boundary integral equation \eqref{eq:Bimp_s}, obtained by formally replacing the Laplace transform variable $s$ by the time differentiation operator~$\pt$.

\bigskip\noindent
{\bf Time-dependent boundary integral equation}: {\it
Find time-dependent boundary densities $(\bvar,\bpsi):[0,T]\to \mVG\times\mXG$ (of temporal regularity to be specified later) such that for almost every $t\in [0,T]$ we have for all $(\bupsilon,\bxi)\in\mVG\times\mXG$,}
\begin{align}\label{Aoperator-t}
%\left\langle
%\begin{pmatrix}
%\bupsilon \\ \bxi
%\end{pmatrix},
%A(\pt)\begin{pmatrix}\bvar \\ \bpsi\end{pmatrix}\right\rangle_\Gamma=
\left[
\begin{pmatrix}
\bupsilon \\ \bxi
\end{pmatrix},
\Bimp(\pt)
\begin{pmatrix}
\bvar \\
\bpsi
\end{pmatrix}
\right]_\Gamma+
\langle \bupsilon, \Z(\pt)\gaT \bvar \rangle_\Gamma = \langle\bupsilon,\g\inc\rangle_\Gamma.
\end{align}

\bigskip\noindent
Here, $\ginc:[0,T]\to \mVG'$ is given by \eqref{eq:def g inc}, assuming that $\ginc\in \H^m_0(0,T;\mVG')$ with sufficiently large $m$ (to be specified later). We refer to Section~\ref{subsec:Z} for the definition of this spatio-temporal Hilbert space.

With the operators $\A(s):\mVG\times\mXG\rightarrow \mVG'\times\mXG'$ defined by \eqref{Asoperator}, this boundary integral equation is rewritten more compactly as in \eqref{bie-A},
\begin{equation}\label{bie-A-t}
\A(\pt) \begin{pmatrix}
\bvar \\
\bpsi
\end{pmatrix} = \begin{pmatrix}
\ginc \\
0
\end{pmatrix}.
\end{equation}
In view of the bound \eqref{boundAm1} on the operator family $\A(s)^{-1}$ for $\Re s>\sigma_0$, the temporal convolution operator $\A^{-1}(\pt)$ is well-defined by \eqref{Heaviside}, and by the composition rule we have $\A^{-1}(\pt) \A(\pt) = \Id$ and
$\A(\pt) \A^{-1}(\pt) = \Id$. So we have the temporal convolution
\begin{align}\label{eq:time-Ainv}
\begin{pmatrix}
\bvar \\ \bpsi
\end{pmatrix}
=\A^{-1}(\pt)
\begin{pmatrix}
\g\inc \\
0
\end{pmatrix}
\end{align}
as the unique solution of \eqref{bie-A-t}. More precisely, with the argument given above and the bound of \cite[Lemma 2.1]{L94}, i.e. \eqref{sobolev-bound} used for $\A^{-1}(\pt)$ instead of $\Z(\pt)$ and with the exponent $\kappa=2$ by \eqref{boundAm1}, we obtain the following result.

\begin{theorem} [Well-posedness of the time-dependent boundary integral equation]
\label{prop:bie-t} Let $r\ge 0$. For $\ginc\in \H^{r+3}_0(0,T;\mVG')$,
the boundary integral equation~\eqref{Aoperator-t}
%~\eqref{bie-A-t}, with the boundary operator $\A(s):\mVG\times\mXG \rightarrow \mVG'\times \mXG'$ defined by \eqref{Asoperator}, 
has a unique solution $(\bvar,\bpsi)\in \H^{r+1}_0(0,T;\mVG\times \mXG)$, and
\begin{equation} \label{phi-psi-bound}
\norm{
	\begin{pmatrix}\bvar\\ \bpsi\end{pmatrix}
}_{H^{r+1}_0(0,T;\mVG\times \mXG)}
\le %\dfrac{C_\sigma}{\mathrm{Re } \ s}
 C_T\,\norm{\ginc}_{H^{r+3}_0(0,T;\mVG')}.
\end{equation}
Here, $C_T$ depends on $T$ (polynomially if $\sigma_0=0$ in  \eqref{eq:pol_bound}--\eqref{eq:positive_type}) and on the boundary $\Gamma$ via norms of tangential trace operators.
\end{theorem}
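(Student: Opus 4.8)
The plan is to invoke the Laplace-domain--to--time-domain passage recorded in Section~\ref{subsec:Z} (following \cite{L94}), applied to the operator family $\A(s)^{-1}$ rather than to $\Z(s)$. The key ingredient is already in hand: Theorem~\ref{prop:bie} together with the bound \eqref{boundAm1} shows that for $\Re s > \sigma_0$ the operator $\A(s):\mVG\times\mXG \to \mVG'\times\mXG'$ is boundedly invertible, with
$$
\norm{\A(s)^{-1}}_{\mVG\times\mXG \leftarrow \mVG'\times\mXG'} \le C_\sigma \,\frac{\abs{s}^2}{\Re s} \le C_\sigma'\,\abs{s}^2 \qquad (\Re s \ge \sigma > \sigma_0),
$$
so that $\A^{-1}$ is an analytic family of bounded operators satisfying a polynomial bound of the form \eqref{eq:pol_bound} with exponent $\kappa = 2$ (here one absorbs the factor $1/\Re s \le 1/\sigma$ into the constant). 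Moreover $\A^{-1}$ is analytic in the half-plane $\Re s > \sigma_0$ because $\A$ is (Lemma~\ref{lem:A-boundedness}) and inversion of a bounded invertible operator is analytic.

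First I would record that, by the composition rule \eqref{comp-rule}, the temporal convolution operators satisfy $\A^{-1}(\pt)\A(\pt) = \Id$ and $\A(\pt)\A^{-1}(\pt) = \Id$ on the appropriate spaces, so that \eqref{eq:time-Ainv} is indeed the unique solution of \eqref{bie-A-t}. Then I would apply the operational-calculus bound \eqref{sobolev-bound}, now for the operator $\A^{-1}(\pt)$ in place of $\Z(\pt)$, between the Hilbert spaces $\V = \mVG'$ and $\V' = \mVG\times\mXG$ (with $\mXG$ its own dual, the target is $\mVG\times\mXG$): for arbitrary real $r$,
$$
\norm{\A^{-1}(\pt)}_{H^r_0(0,T;\mVG\times\mXG) \leftarrow H^{r+2}_0(0,T;\mVG'\times\mXG')} \le e\, M_{1/T},
$$
where $M_{1/T}$ is the polynomial-bound constant at $\sigma = 1/T$. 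Applying this with the right-hand side $(\ginc, 0)$, which lies in $H^{r+3}_0(0,T;\mVG'\times\mXG')$ when $\ginc \in H^{r+3}_0(0,T;\mVG')$ and in fact in $H^{r+2}_0$ already (we keep one extra unit of regularity only to land the solution in $H^{r+1}_0$), yields $(\bvar,\bpsi) \in H^{r+1}_0(0,T;\mVG\times\mXG)$ together with the stated estimate \eqref{phi-psi-bound}, with $C_T = e\,M_{1/T}$. The dependence of $C_T$ on $T$ is polynomial precisely when $\sigma_0 = 0$, since then one may take $M_\sigma$ independent of $\sigma$ as $\sigma\downarrow 0$ and the only $T$-dependence is through the exponential factor $e^{\sigma T}$ with $\sigma = 1/T$, which is the constant $e$; conversely, if $\sigma_0 > 0$ one must keep $\sigma > \sigma_0$ fixed and accepts a factor $e^{\sigma T}$.

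The main obstacle — and it is a mild one, since all the analytic work was done in Section~\ref{section:time harmonic Maxwell} — is bookkeeping of function spaces and of the exponent shift: one must check that the product space $\mVG'\times\mXG'$ pairs correctly with $\mVG\times\mXG$ under the anti-duality $\langle\cdot,\cdot\rangle_\Gamma$ used in \eqref{bie-A-t}, that the zero second component causes no loss, and that the exponent $\kappa = 2$ in the polynomial bound \eqref{boundAm1} is what governs the $+2$ shift in \eqref{sobolev-bound} (hence the gap between $r+3$ on the data side and $r+1$ on the solution side, one extra unit being spent to pass from $H^{r+2}_0$ regularity of the solution to the $H^{r+1}_0$ stated, or equivalently to have room in the embedding remarks that follow). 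Everything else is a verbatim transcription of the $\Z(\pt)$-argument of Section~\ref{subsec:Z} with $\A^{-1}$ in the role of $\Z$, and no new estimate is required.
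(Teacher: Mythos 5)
Your proof is correct and follows the same route as the paper: read off a $\kappa=2$ polynomial bound for $\A(s)^{-1}$ from \eqref{boundAm1} by absorbing the $1/\Re s$ factor into $M_\sigma$, use the composition rule \eqref{comp-rule} to identify \eqref{eq:time-Ainv} as the unique solution of \eqref{bie-A-t}, and then invoke the Plancherel-based operator-valued bound \eqref{sobolev-bound} (i.e.\ \cite[Lemma 2.1]{L94}) with $\A^{-1}$ in the role of $\Z$. One small inaccuracy in your final remark: after absorbing $1/\Re s\le 1/\sigma$ into the constant one obtains $M_\sigma\sim C_\sigma/\sigma$, which is \emph{not} $\sigma$-independent as $\sigma\downarrow 0$; the polynomial dependence of $C_T$ on $T$ when $\sigma_0=0$ comes from $M_{1/T}$ being polynomial in $T$ (because $C_\sigma$ depends polynomially on $\sigma^{-1}$ by Theorem~\ref{prop:bie}), while the exponential factor $e^{\sigma T}$ evaluated at $\sigma=1/T$ contributes only the fixed constant $e$.
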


With the time-dependent boundary densities $\bvar,\bpsi$ of Theorem~\ref{prop:bie-t}, the scattered wave is obtained by the time-dependent representation formula (assuming here again $\eps\mu=1$)
\begin{align}
\label{eq:time-dependent-representation-E}
\E & = - \mathcal{S}(\pt)\bvar+\mathcal{D}(\pt)\bpsi, \\
\label{eq:time-dependent-representation-H}
\B & = - \mathcal{D}(\pt) \bvar - \mathcal{S}(\pt) \bpsi .
\end{align}
We now give the well-posedness result for the time-dependent scattering problem under the generalized impedance boundary condition,
which follows from the time-harmonic well-posedness result Theorem~\ref{th:time-harmonic-well-posedness}.

\begin{theorem}[Well-posedness of the time-dependent scattering problem]
\label{th:time-dependent-well-posedness} \ Consider the time-dependent scattering problem \eqref{MW1}--\eqref{MW2} (with the normalization $\eps\mu=1$)
	under the generalized impedance boundary condition \eqref{gibc-weak}, with $\Z(s)$ satisfying conditions \eqref{eq:pol_bound}--\eqref{eq:positive_type} with $\kappa\le 1$ and with $\widehat{\g}^\mathrm{inc} \in \H^{r+3}_0(0,T;\mVG')$ for some arbitrary $r\ge 0$.

	(a) This problem has a unique solution
	$$({\E},{\B}) \in \H^r_0(0,T;\H(\curl,\Omega)^2) \cap \H^{r+1}_0(0,T;(\bL^2(\Omega))^2),
	$$
	which is given by the
	representation  formulas \eqref{eq:time-dependent-representation-E}--\eqref{eq:time-dependent-representation-H}. The tangential traces
 are uniquely determined by the  solution of the system of boundary integral equations of Theorem~\ref{prop:bie-t},
	$$(\bvar,\bpsi)=(\mu\gaT{\H},-\gaT {\E})\in H^{r+1}_0(0,T;\mVG \times \mXG).$$

	(b) The electromagnetic fields are bounded by
		\begin{align*}
		\| {\E} \|_{\H^r_0(0,T;\H(\curl,\Omega))} + \| {\B} \|_{\H^r_0(0,T;\H(\curl,\Omega))}
	\le %\dfrac{C_\sigma}{\mathrm{Re } \ s}
	C_T \|\ginc\|_{\H^{r+3}_0(0,T;\mVG')},
	\end{align*}
	and the same bound is valid for the $\H^{r+1}_0(0,T;(\bL^2(\Omega))^2)$ norms.
 Here, $C_T$ depends on $T$ (polynomially if $\sigma_0=0$ in  \eqref{eq:pol_bound}--\eqref{eq:positive_type}) and on the boundary $\Gamma$ via norms of tangential trace operators, but is independent of $\eps$ and $\mu$ with $\eps\mu=1$ and, in the case of the impedance operators \eqref{eq:thin_layer_ord1}--\eqref{eq:gibc_absorbing_ord2}, independent of the small parameter $\delta$.
\end{theorem}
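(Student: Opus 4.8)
The plan is to obtain Theorem~\ref{th:time-dependent-well-posedness} from the frequency-explicit results of Section~\ref{section:time harmonic Maxwell} by the operational calculus of Section~\ref{subsec:Z}, in the same way as the passage \eqref{Heaviside}--\eqref{sobolev-bound} going back to \cite{L94}. The key point is that Theorem~\ref{th:time-harmonic-well-posedness} together with Theorem~\ref{prop:bie} provides, for $\Re s>\sigma_0$, a solution operator $\wginc\mapsto(\widehat{\E},\widehat{\B})$ which is an \emph{analytic} family of bounded linear operators from $\mVG'$ into $\H(\curl,\Omega)^2$ (and into $\bL^2(\Omega)^2$): analyticity is inherited from the analyticity and bounded invertibility of $\A(s)$ and from the analytic dependence of the potential operators $\mathcal{S}(s),\mathcal{D}(s)$ on $s$. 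By \eqref{Heaviside} its inverse Laplace transform is therefore a causal convolution operator, and the time-domain bounds follow from \eqref{sobolev-bound} once the $s$-explicit bounds of Theorem~\ref{th:time-harmonic-well-posedness} are put into the polynomial form \eqref{eq:pol_bound}.

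First I would establish part~(a). Applying Theorem~\ref{prop:bie-t} (the hypothesis $\kappa\le1$ guaranteeing that $\ginc\in\H^{r+3}_0(0,T;\mVG')$ carries enough temporal regularity) produces a unique $(\bvar,\bpsi)\in\H^{r+1}_0(0,T;\mVG\times\mXG)$ solving \eqref{bie-A-t}, and I would define $(\E,\B)$ by the time-dependent representation formulas \eqref{eq:time-dependent-representation-E}--\eqref{eq:time-dependent-representation-H}, which are well-defined convolutions by Lemma~\ref{lem:potential-SD-bound} combined with \eqref{sobolev-bound}. Transferring the relations \eqref{SD-MW} to the time domain via the composition rule \eqref{comp-rule} shows that $(\E,\B)$ solves \eqref{MW1}--\eqref{MW2} in $\Omega$, with vanishing initial data since all operators involved are causal convolutions of causal data. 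To check the weak boundary condition \eqref{gibc-weak} I would repeat in the time domain, essentially verbatim, the Calder\'on-identity reduction used in the proof of Theorem~\ref{th:time-harmonic-well-posedness}: viewing $(\bvar,\bpsi)$ as the jumps of the fields extended by zero into $\Omega^-$, the identity analogous to \eqref{eq:Bimp-reduction} turns \eqref{bie-A-t} into the assertions that $\gaT^-\B=0$ and that \eqref{gibc-weak} holds with $\gaT^+$-traces; Green's formula \eqref{eq: Green} on $\Omega^-$ then forces $\E|_{\Omega^-}=\B|_{\Omega^-}=0$, so the jumps are the exterior traces and $(\bvar,\bpsi)=(\gaT\B,-\gaT\E)$, which also yields the asserted uniqueness of the traces in terms of the boundary integral equation.

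For the bounds in part~(b) the crucial point is to use the \emph{composite} bound of Theorem~\ref{th:time-harmonic-well-posedness}(b) rather than estimating the representation formula term by term: applying $\mathcal{S}(\pt)$ and $\mathcal{D}(\pt)$, which carry Laplace-domain exponent $\kappa=2$, to densities in $\H^{r+1}_0$ would only give $\H^{r-1}_0$, whereas the required cancellation is already contained in the bound $\|(\widehat{\E},\widehat{\B})\|_{\H(\curl,\Omega)^2}\le C_\sigma|s|^3(\Re s)^{-3/2}\|\wginc\|_{\mVG'}$. For $\Re s\ge\sigma>\sigma_0$ I would absorb $(\Re s)^{-3/2}\le\sigma^{-3/2}$, obtaining a bound of the form \eqref{eq:pol_bound} with exponent $3$, and likewise the $\bL^2$ bound \eqref{EH-L2-bound} with exponent $2$; then \eqref{sobolev-bound} with $\sigma=1/T$ gives $\|(\E,\B)\|_{\H^r_0(0,T;\H(\curl,\Omega)^2)}\le C_T\|\ginc\|_{\H^{r+3}_0(0,T;\mVG')}$ and, one regularity level up, $\|(\E,\B)\|_{\H^{r+1}_0(0,T;\bL^2(\Omega)^2)}\le C_T\|\ginc\|_{\H^{r+3}_0(0,T;\mVG')}$, with $C_T=eM_{1/T}$ depending on $T$ only through the constants of Theorem~\ref{th:time-harmonic-well-posedness} (hence polynomially when $\sigma_0=0$). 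Since by associativity of convolution this field coincides with the one defined by the representation formula in part~(a), the claimed regularity $(\E,\B)\in\H^r_0(0,T;\H(\curl,\Omega)^2)\cap\H^{r+1}_0(0,T;\bL^2(\Omega)^2)$ follows a posteriori, and the $\eps,\mu$- and $\delta$-independence of $C_T$ is inherited from Theorem~\ref{th:time-harmonic-well-posedness} and Lemmas~\ref{lem:EN}--\ref{lem:absorb}.

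It remains to prove uniqueness for the full initial--boundary value problem, not only for the boundary integral equation. Given a solution $(\E,\B)$ in the stated class with $\ginc=0$, I would use that causality and finite propagation speed make it coincide in $\Omega$ with its own time-domain Stratton--Chu representation, so that $(\gaT\B,-\gaT\E)$ satisfies the Calder\'on identity; together with \eqref{gibc-weak} this shows these traces solve \eqref{bie-A-t} with right-hand side zero, which by Theorem~\ref{prop:bie-t} has only the trivial solution, whence $\E=\B=0$ by the representation formula. (Equivalently, one may pass to the Laplace domain and use the energy identity from \eqref{eq: Green} together with the positivity \eqref{eq:positive_type} of $\Z(s)$.) The main obstacle here is not analytical---all the hard estimates are already in Section~\ref{section:time harmonic Maxwell} and in Theorem~\ref{prop:bie-t}---but organisational: one must (i) realise the sharp loss of exactly three temporal derivatives through the composite solution operator rather than through the product of the bounds for $\A^{-1}(\pt)$ and the potential operators, and (ii) supply the Stratton--Chu representation of an arbitrary outgoing exterior solution in order to reduce uniqueness of the scattering problem to uniqueness of the boundary integral equation.
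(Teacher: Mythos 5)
Your proposal is correct and follows essentially the same route as the paper: solve the time-dependent boundary integral equation via Theorem~\ref{prop:bie-t}, define the fields by the representation formulas \eqref{eq:time-dependent-representation-E}--\eqref{eq:time-dependent-representation-H}, and transfer the composite frequency-explicit bounds of Theorem~\ref{th:time-harmonic-well-posedness} (the $\abs{s}^3/(\Re s)^{3/2}$ and $\abs{s}^2/(\Re s)^{3/2}$ bounds of the solution operator, not the product of the bounds for $\A(s)^{-1}$ and the potentials) to the time domain via \eqref{sobolev-bound}, which is exactly the paper's argument. The only differences are cosmetic and at the same level of detail as the paper itself: you verify the boundary condition and uniqueness by redoing the Calder\'on reduction and a Stratton--Chu argument in the time domain, whereas the paper passes through the Laplace transform of the constructed fields and invokes uniqueness of the traces together with well-posedness of the exterior Maxwell problem with prescribed tangential trace.
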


\begin{proof}
We extend $g^\mathrm{inc} \in \H^{r}_0(0,T; \mVG')$ from the interval $(0,T)$ to a function in $\H^{r}(\R; \mVG')$ on the whole real line, with support in $[0,2T]$. The fields $(\E,\B)$ defined by the time-dependent boundary integral equation \eqref{bie-A-t} and the time-dependent representation formulas \eqref{eq:time-dependent-representation-E}--\eqref{eq:time-dependent-representation-H} have the regularity as stated because of \eqref{sobolev-bound} used for the time-harmonic solution operator with the bounds given in Theorem~\ref{th:time-harmonic-well-posedness}, and they satisfy the stated bounds
on every finite interval $(0, \bar T)$, with at most exponential growth in $\bar T$ of the norm with an arbitrary exponent $\sigma_1>\sigma_0$. The Laplace transform $(\widehat \E(s),\widehat \B(s))$ then exists for $\Re s>\sigma_0$, and it is obtained by the
solution of the time-harmonic boundary integral equation \eqref{bie-A} and the time-harmonic representation formulas
\eqref{eq:time-harmonic-kirchhoff-E}--\eqref{eq:time-harmonic-kirchhoff-H}. By Theorem~\ref{th:time-harmonic-well-posedness},
$(\widehat \E(s),\widehat \B(s))$ is the  solution to the time-harmonic scattering problem with the time-harmonic generalized impedance boundary conditions. Taking the inverse Laplace transform then shows that $(\E,\B)$ solve the time-dependent scattering problem \eqref{MW1}--\eqref{MW2}
under the generalized impedance boundary condition \eqref{gibc-weak}. Finally, the uniqueness of the time-dependent solution $(\E,\B)$ follows from the uniqueness of the tangential traces and the well-posedness of the time-dependent exterior Maxwell problem with a given tangential trace.
\qed
\end{proof}

%%%%%

\section{Semi-discretization in time by Runge--Kutta convolution quadrature}
\label{section:time semi-discrete}

%\subsection{BDF convolution quadratures}
%We approximate the convolution $K(\partial_t)g$, cf.~\eqref{eq:Heaviside notation}, using \emph{convolution quadratures} (CQ), for more details see, e.g., \cite{L88,L94}. For a fixed time step size $\tau > 0$, we denote the approximation
%\begin{align*}
%	\left( K(\pttau)g\right) (t):= \sum_{j\ge 0} w_j g(t-j\tau).
%\end{align*}
%The quadrature weights are defined as the coefficients of the generating power series
%\begin{align*}
%	\sum_{j=0}^{\infty}w_j \zeta^j:=K\left(\dfrac{\delta(\zeta)}{\tau}\right), \quad \text{for} \quad \left| \zeta \right|
%	\quad \text{small.}
%\end{align*}
%We write $\delta(\zeta)=\sum_{j=0}^{\infty}\delta_j\zeta^j$ is the quotient of generating polynomials of a $p$-step linear multistep method
%$\sum_{j=0}^{p} \alpha_j y^{n-j} = \tau \sum_{j=0}^p \beta_j f_{n-j}$, i.e.
%\begin{align*}
%	\delta(\zeta) = \dfrac{\alpha_0+\alpha_1 \zeta + \dotsb + \alpha_k\zeta^k}{\beta_0+\beta_1\zeta + \dotsb + \beta_k \zeta^k}.
%\end{align*}
%
%
%The convolutions in time of system \eqref{Aoperator} are now discretized, using the convolution quadrature based on the backward difference formulae (BDF) of order 1 and 2, which have generating polynomials
%\begin{alignat*}{3}
%	& \text{$1$-step BDF:} & \qquad & \beta(\zeta) = 1, \quad \alpha(\zeta) = 1 - \zeta , \\
%	& \text{$2$-step BDF:} & \qquad & \beta(\zeta) = 1, \quad \alpha(\zeta) = \frac{3}{2} - 2 \zeta + \frac{1}{2} \zeta^2 .
%\end{alignat*}
\subsection{Recap: Runge--Kutta convolution quadrature}

Runge--Kutta convolution quadratures will be used here to approximate temporal convolutions $K(\partial_t)g$; cf.~\eqref{Heaviside}. Let us first recall an $m$-stage implicit Runge--Kutta  discretization of the initial value problem $y' = f(t,y)$, $y(0) = y_0$;  see \cite{HairerWannerII}. For a time step $\tau > 0$, the approximations $y^n$ to $y(t_n)$ at time $t_n = n \tau$, and the internal stages $Y^{ni}$ approximating $y(t_n + c_i \tau)$, are obtained from
\begin{equation*}
\begin{aligned}
Y^{ni} &= y^n + \tau \sum_{j = 1}^m a_{ij} f(t_n+c_jh,Y^{nj}), \qquad i =
1,\dotsc,m,\\
y^{n+1} & = y^n + \tau \sum_{j = 1}^m b_j f(t_n+c_jh,Y^{nj}) .
\end{aligned}
\end{equation*}
The method is given by its coefficients
\begin{equation*}
\mathscr{A} = (a_{ij})_{i,j = 1}^m , \quad \cqb = (b_1,\dotsc,b_m)^T,
\quad \text{and} \quad \cqc = (c_1,\dotsc,c_m)^T .
\end{equation*}
The stability function of the Runge--Kutta method is given by $R(z) = 1 + z b^T (\cqI - z \mathscr{A})^{-1} \bone$, where $\bone = (1,1,\dotsc,1)^T \in \R^m$. We always assume that $\mathscr{A}$ is invertible.

Runge--Kutta methods can be used to construct convolution quadrature methods.
Such methods were first introduced in \cite{LubichOstermann_RKcq} in the context of parabolic problems and were studied for wave propagation problems in \cite{BLM11} and subsequently, e.g., in \cite{BanjaiKachanovska,BanjaiLubich2019,BanjaiMessnerSchanz,BanjaiRieder}. Runge--Kutta convolution quadrature was studied  for the numerical solution of some exterior Maxwell problems in \cite{BBSV13,ChenMonkWangWeile} and of an eddy current problem with an impedance boundary condition in \cite{HiptmairLopezFernandezPaganini}. For wave problems, Runge--Kutta convolution quadrature methods such as those based on the Radau IIA methods (see \cite[Section~IV.5]{HairerWannerII}), often enjoy more favourable properties than their BDF-based counterparts, which are more dissipative and cannot exceed order~2 but are easier to understand and slightly easier to implement.

Let $\cqK(s):\cqX\to \cqY$, $\Re s \ge \sigma_0>0$,  be an analytic family of linear operators between Banach spaces $\cqX$ and $\cqY$, satisfying
the bound, for some exponents $\kappa\in\R$ and $\nu\ge 0$,
\begin{equation}\label{KLM-bound}
\| \cqK(s) \|_{\cqY\leftarrow \cqX} \le M_\sigma \frac{|s|^\kappa}{(\Re s)^\nu}, \qquad \Re s \ge \sigma> \sigma_0.
\end{equation}
As in Section~\ref{subsec:Z}, this yields a convolution operator $\cqK(\pt):H^{r+\kappa}_0(0,T;\cqX) \to \cqH^{r}_0(0,T;\cqY)$  for arbitrary real $r$. For functions $\cqg:[0,T]\to \cqX$ that are sufficiently regular (together with their extension by 0 to the negative real half-axis $t<0$),
we wish to approximate the convolution $(\cqK(\pt)\cqg)(t)$ at discrete times $t_n=n\tau$ with a step size $\tau>0$, using a discrete convolution.

To construct the convolution quadrature weights, we use the \emph{Runge--Kutta differentiation symbol}
\begin{equation}
\label{eq:Delta}
\Delta(\zeta) = \Bigl(\mathscr{A}+\frac\zeta{1-\zeta}\bone \cqb^T\Bigr)^{-1} \in \C^{m \times m}, \qquad
\zeta\in\C \hbox{ with } |\zeta|<1.
\end{equation}
This is well-defined for $|\zeta|<1$ if %$\mathscr{A}$ is invertible and if
$R(\infty)=1-b^T\mathscr{A}^{-1}\bone$ satisfies $|R(\infty)|\le 1$, as is seen from the Sherman--Woodbury formula. Moreover, for A-stable Runge--Kutta methods (e.g. the Radau IIA methods), the eigenvalues of the matrices $\Delta(\zeta)$ have positive real part for $|\zeta|<1$~\cite[Lemma 3]{BLM11}.
%In fact, the Sherman--Morrison formula then yields
%\begin{equation*}
%	\Delta(\zeta) = \mathscr{A}^{-1} -\frac{\zeta}{1-R(\infty)\zeta} \mathscr{A}^{-1} \bone b^T \mathscr{A}^{-1}.
%\end{equation*}

To formulate the Runge--Kutta convolution quadrature for $\cqK(\partial_t )\cqg$, we replace the complex argument $s$ in $\cqK(s)$ by the matrix $\Delta(\zeta)/\tau$ and expand
\begin{equation}\label{rkcq-weights}
\cqK\Bigl(\frac{\Delta(\zeta)}\tau \Bigr) = \sum_{n=0}^\infty {\boldsymb W}_n(\cqK) \zeta^n.
\end{equation}
The operators ${\cqW}_n(\cqK):\cqX^m \to \cqY^m$ are used as the convolution quadrature ``weights''.
For the discrete convolution of these operators with a sequence $\cqg=(\cqg^n)$ with $\cqg^n=(\cqg^n_i)_{i=1}^m\in \cqX^m$
we use the notation
\begin{equation}\label{rkcq}
\bigl(\cqK(\pttau) \boldsymb g \bigr)^n = \sum_{j=0}^n {\boldsymb W}_{n-j}(\cqK) \boldsymb g^j \in \cqY^m.
\end{equation}
Given a function $\boldsymb g:[0,T]\to \cqX$, we use this notation for the vectors $\cqg^n = \bigl(\cqg(t_n+c_i\tau)\bigr)_{i=1}^m$ of values of $\cqg$.
The $i$-th component of the vector $\bigl(\cqK(\pttau) \boldsymb g \bigr)^n$ is then an approximation to $\bigl(\cqK(\partial_t)\cqg\bigr)(t_n+c_i\tau)$; see \cite[Theorem 4.2]{BanjaiLubich2019}.

In particular, if $c_m = 1$, as is the case with Radau IIA methods,
the continuous convolution at $t_{n}$ is approximated by the $m$-th, i.e.~last component of the $m$-vector \eqref{rkcq} for $n-1$:
\begin{equation*}
\bigl(\cqK(\partial_t) \cqg \bigr)(t_{n}) \approx   \Bigl[\bigl(\cqK( \pttau) \boldsymb g \bigr)^{n-1}\Bigr]_m \in \cqY. 
\end{equation*}
%where ${\boldsymb e}_m = (0, \dotsc, 0, 1)^T \in \R^m$ is the $m$-th unit vector.

An essential property is that the composition rule~\eqref{comp-rule} is preserved under this discretization: for two such operator families $\cqK(s)$ and $\cqL(s)$ that map to compatible spaces, we have
\begin{equation}\label{comp-rule-tau}
\cqK(\pttau)\cqL(\pttau)\cqg = (\cqK\cqL)(\pttau)\cqg.
\end{equation}

The following error bound for Runge--Kutta convolution quadrature from~\cite{BLM11}, here directly stated for the Radau IIA methods \cite[Section~IV.5]{HairerWannerII} and transferred to a Banach space setting, will be the basis for our error bounds of the  time discretization.

\begin{lemma}[{\cite[Theorem~3]{BLM11}}]
	\label{lem:RK-CQ}
	Let $\cqK(s):\cqX\to \cqY$, $\Re s > \sigma_0\ge0$,  be an analytic family of linear operators between Banach spaces $\cqX$ and $\cqY$ satisfying
the bound \eqref{KLM-bound} with exponents $\kappa$ and $\nu$.
	Consider the Runge--Kutta convolution quadrature based on the Radau IIA method with $m$ stages. Let $1\le q\le m$ (the most interesting case is $q=m$) and $r>\max(2q-1+\kappa,2q-1,q+1)$. Let $\cqg \in \cqC^r([0,T],\cqX)$ satisfy $\cqg(0)=\cqg'(0)=...=\cqg^{(r-1)}(0)=0$. Then, the following error bound holds at $t_n=n\tau\in[0,T]$:
	\begin{align*}
	&\norm{{ \Bigl[\bigl(\cqK( \pttau) \boldsymb g \bigr)^{n-1}\Bigr]_m-(\cqK(\pt)\cqg)(t_{n})} }_{\cqY}
	\\
	&\quad\quad\quad \le
	C\, M_{1/T}\,\tau^{\min(2q-1,q+1-\kappa+\nu)}
	\left(\|{\cqg^{(r)}(0)}\|_{\cqX}+\int_0^t\|{\cqg^{(r+1)}(t')}\|_{\cqX} \,\mathrm{d}t'
	\right).
	\end{align*}
	The constant C is independent of $\tau$ and $\cqg$ and $M_\sigma$ of \eqref{KLM-bound}, but depends on the exponents $\kappa$ and $\nu$ in \eqref{KLM-bound} and on the final time $T$.
\end{lemma}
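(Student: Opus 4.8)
The plan is to prove this by the contour-integral method that underlies all convolution-quadrature error estimates, in the Runge--Kutta form of \cite{BLM11}; I outline the main steps. First I would pass to the Laplace domain. Since $\cqg\in\cqC^r([0,T],\cqX)$ vanishes together with its first $r-1$ derivatives at $t=0$, integrating by parts $r+1$ times gives $\widehat{\cqg}(s)=s^{-r-1}\bigl(\widehat{\cqg^{(r+1)}}(s)+\cqg^{(r)}(0)\bigr)$, hence $\|\widehat{\cqg}(s)\|_{\cqX}\le |s|^{-r-1}\bigl(\|\cqg^{(r)}(0)\|_{\cqX}+\int_0^T\|\cqg^{(r+1)}(t')\|_{\cqX}\,\mathrm{d}t'\bigr)$ for $\Re s\ge0$. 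Combined with \eqref{KLM-bound}, this makes
\[ (\cqK(\pt)\cqg)(t)=\frac{1}{2\pi\mathrm{i}}\int_{\sigma-\mathrm{i}\infty}^{\sigma+\mathrm{i}\infty}e^{st}\,\cqK(s)\,\widehat{\cqg}(s)\,\mathrm{d}s \]
absolutely convergent for $\sigma>\sigma_0$ once $r$ is large. On the discrete side, Cauchy's formula applied to the generating function \eqref{rkcq-weights} represents $\bigl(\cqK(\pttau)\cqg\bigr)^{n}$ as a contour integral of $\zeta^{-n-1}\cqK(\Delta(\zeta)/\tau)\,\widehat{\cqg}_\tau(\zeta)$ over $|\zeta|=\lambda<1$, where $\widehat{\cqg}_\tau(\zeta)=\sum_j\cqg^j\zeta^j$ collects the stage values; the substitution $\zeta=e^{-s\tau}$, with $\lambda=e^{-\tau/T}$, i.e.\ $\Re s=1/T$, turns this into an integral over the same vertical line, with $\tau\,\widehat{\cqg}_\tau(e^{-s\tau})$ a perturbed Riemann sum for $\widehat{\cqg}(s)$. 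The error at $t_n$ is then, modulo the routine handling of this Riemann-sum correction and of the $\tau$-periodicity in $\Im s$, the difference of the two integrals, to be estimated for $s$ on $\Re s=1/T$.

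The heart of the argument is a bound on $\cqK(\Delta(e^{-s\tau})/\tau)-\cqK(s)$. The ingredients are: the composition rule \eqref{comp-rule-tau}, which reduces the estimate to the convolution-quadrature error for the elementary symbols $s^{-1},s^{-2},\dots$; the fact that the eigenvalues of $\Delta(\zeta)$ have positive real part for $|\zeta|<1$ \cite[Lemma~3]{BLM11}, which --- together with bounds on $(\cqI-z\mathscr{A})^{-1}$ and on the stability function (using $|R(\infty)|\le1$ for Radau~IIA) --- keeps $\Delta(e^{-s\tau})/\tau$ in the half-plane where \eqref{KLM-bound} applies and bounds $\cqK(\Delta(e^{-s\tau})/\tau)$ by $M_\sigma|s|^\kappa/(\Re s)^\nu$; and the order conditions of the method, by which applying the quadrature to the elementary symbols reproduces them to stage order $q$ on the internal stages (an $O(\tau^{q+1})$ local error) and to classical order $2q-1=2m-1$ on the step-completing last stage of Radau~IIA --- which is exactly where the two competing exponents $2q-1$ and $q+1$ arise. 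Putting these together yields, for $|s\tau|\le c$ on $\Re s=1/T$,
\[ \bigl\|\cqK(\Delta(e^{-s\tau})/\tau)-\cqK(s)\bigr\|_{\cqY\leftarrow\cqX}\le C\,M_{1/T}\,\tau^{\min(2q-1,\,q+1)}\,|s|^{\min(2q-1,\,q+1)}\,\frac{|s|^{\kappa}}{(\Re s)^{\nu}}. \]

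Finally I would split the line $\Re s=1/T$ into the ranges $|s|\le c/\tau$ and $|s|>c/\tau$. On the first, inserting the bound just displayed together with $\|\widehat{\cqg}(s)\|\le|s|^{-r-1}(\cdots)$ gives a convergent integral precisely under $r>\max(2q-1+\kappa,2q-1,q+1)$, and the surplus powers of $|s|\sim\tau^{-1}$ leave the factor $\tau^{\min(2q-1,\,q+1-\kappa+\nu)}$ (losing $\kappa$ powers of $|s|$, gaining $\nu$ powers of $\Re s$). On the second range one bounds $\cqK(s)$ and $\cqK(\Delta(e^{-s\tau})/\tau)$ separately via \eqref{KLM-bound} and the resolvent bounds, and the strong decay of $\widehat{\cqg}$ shows this tail contributes a term of at least the same order in $\tau$. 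Since $t_n\le T$ the factor $e^{st_n}$ stays bounded by $e$, and the $e^{\sigma T}$ arising from $\sigma=1/T$ is absorbed into $C$, leaving precisely the constant $M_{1/T}$; this is the claimed estimate.

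The step I expect to be the main obstacle is the symbol comparison: making the consistency and stability analysis of the matrix symbol $\Delta(e^{-s\tau})/\tau$ quantitative and uniform along the contour, and in particular tracking \emph{separately} the stage-order contribution (rate $q+1$, degraded to $q+1-\kappa+\nu$ by the growth of $\cqK$) and the step-completion contribution (classical rate $2q-1$), whose minimum is the sharp convergence order. The passage from a Hilbert- to a Banach-space setting adds no real difficulty, since the entire argument uses only holomorphy of $s\mapsto\cqK(s)$ and the scalar bound \eqref{KLM-bound}.
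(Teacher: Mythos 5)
The paper does not actually prove this lemma: as the bracketed heading indicates, it is quoted from \cite[Theorem~3]{BLM11} (restated for Radau IIA and transferred to a Banach-space setting), so your attempt has to be measured against that reference rather than against an in-paper argument. Your overall architecture — Laplace inversion with the decay $\|\widehat{\cqg}(s)\|_{\cqX}\le |s|^{-r-1}\bigl(\|\cqg^{(r)}(0)\|_{\cqX}+\int_0^T\|\cqg^{(r+1)}\|_{\cqX}\bigr)$ from the vanishing initial data, the generating-function representation of the discrete convolution, the substitution $\zeta=e^{-s\tau}$, and a low/high-frequency splitting of the contour at $|s|\sim 1/\tau$ — is indeed the skeleton of the proof in that reference, and the treatment of the constant ($\sigma=1/T$, $e^{\sigma T}\le e$) is fine.

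The decisive quantitative step, however, is missing, and the intermediate estimate you put in its place cannot be correct as stated. If the displayed bound
\[
\bigl\|\cqK(\Delta(e^{-s\tau})/\tau)-\cqK(s)\bigr\|_{\cqY\leftarrow\cqX}\le C\,M_{1/T}\,\tau^{\min(2q-1,q+1)}\,|s|^{\min(2q-1,q+1)}\,\frac{|s|^{\kappa}}{(\Re s)^{\nu}}
\]
held uniformly for $|s\tau|\le c$ on $\Re s=1/T$, then inserting it together with $\|\widehat{\cqg}(s)\|_{\cqX}\lesssim |s|^{-r-1}$ into the low-frequency part of the contour integral gives a \emph{convergent} integral (this is exactly what $r>2q-1+\kappa$ buys), hence an error of order $\tau^{\min(2q-1,q+1)}$, with the high-frequency tail of order $\tau^{r-\kappa}$ even smaller. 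That is strictly better than the claimed $\tau^{\min(2q-1,\,q+1-\kappa+\nu)}$ — and better than what is true, since the order reduction for $\kappa>\nu$ is sharp in general (it is precisely what produces the reduced order $m-1/2$ in Proposition~\ref{prop:cq} of this paper). So either your symbol bound is false in this uniform factorized form (it is: the stage-order defect of $\Delta(e^{-s\tau})$ cannot be traded for a clean $\tau^{q+1}|s|^{q+1}$ prefactor of the full operator bound; the exponent $q+1-\kappa+\nu$ is exactly the price extracted in \cite{BLM11} through the analysis of the Runge--Kutta approximation of $e^{st}$ at internal stages versus the completion stage and the $\nu$-dependent gain from $\Re s$), or the subsequent bookkeeping of ``losing $\kappa$ powers of $|s|$, gaining $\nu$ powers of $\Re s$'' is not a legitimate manipulation of an already convergent integral. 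Either way, the mechanism generating the exponent $q+1-\kappa+\nu$ — the heart of \cite[Theorem~3]{BLM11} — is asserted rather than proved, as you yourself flag. Two further inaccuracies: $q$ in the lemma is a free parameter $1\le q\le m$, not the stage order of the method, so your appeal to ``classical order $2q-1$ on the last stage'' only makes sense for $q=m$ (for $q<m$ the result rests on a reduced-regularity argument, not on the order conditions); and the reduction ``via the composition rule to the elementary symbols $s^{-1},s^{-2},\dots$'' is not how the cited proof proceeds and does not by itself control $\cqK$ at the matrix argument for a general analytic family satisfying only \eqref{KLM-bound}.
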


%\begin{lemma}[{\cite[Theorem~3]{BLM11}}]
%	\label{lem:RK-CQ}
%	Let $K(s):X\to Y$, $\Re s > \sigma_0\ge0$,  be an analytic family of linear operators between Banach spaces $X$ and $Y$ satisfying
%the bound \eqref{KLM-bound} with exponents $\kappa$ and $\nu$.
%	Consider the Runge--Kutta convolution quadrature based on the Radau IIA method with $m$ stages. Let $r>\max(2m-1+\kappa,2m-1,m+1)$ and $g \in C^r([0,T],X)$ satisfy $g(0)=g'(0)=...=g^{(r-1)}(0)=0$. Then, the following error bound holds at $t_n=n\tau\in[0,T]$:
%	\begin{align*}
%	&\|{(K(\pttau) \boldsymb g)^{n-1}_m-(K(\pt)g)(t_{n})}\|_Y
%	\\
%	&\quad\quad\quad \le
%	C\, M_{1/T}\,\tau^{\min(2m-1,m+1-\kappa+\nu)}
%	\left(\|{g^{(r)}(0)}\|_X+\int_0^t\|{g^{(r+1)}(t')}\|_X \,\mathrm{d}t'
%	\right).
%	\end{align*}
%	The constant C is independent of $\tau$ and $g$ and $M_\sigma$ of \eqref{KLM-bound}, but depends on the exponents $\kappa$ and $\nu$ in \eqref{KLM-bound} and on the final time $T$.
%\end{lemma}
%

\subsection{Convolution quadrature for the scattering problem}

Using a Runge--Kutta based convolution quadrature for the semi-discretization in time of the time-dependent boundary integral equation \eqref{bie-A-t} yields the discrete convolution equation
\begin{align}
\label{bie-A-tau}
\A(\pttau)\begin{pmatrix}
\bvar^\tau\\
\bpsi^\tau
\end{pmatrix}
=
 \begin{pmatrix}
\ginc \\
0
\end{pmatrix}.
\end{align}
By the discrete composition rule \eqref{comp-rule-tau}, the solution to this equation is given by the convolution quadrature semi-discretization of the convolution~\eqref{eq:time-Ainv},
\begin{align*}
%\label{AinvoperatorCQ}
\begin{pmatrix}
\bvar^\tau\\
\bpsi^\tau
\end{pmatrix}
=
\A^{-1}(\pttau) \begin{pmatrix}
\ginc \\
0
\end{pmatrix}.
\end{align*}
 This formula is extremely useful for the convergence analysis, since it interprets the solution of the discretized boundary integral equation as a mere convolution quadrature, to which we can apply the error bound of Lemma~\ref{lem:RK-CQ} using the bound \eqref{boundAm1} of $\A(s)^{-1}$. (Such an argument was first used in~\cite{L94} for a time-dependent boundary integral equation in the acoustic case.) In particular, no stability issues arise for this time discretization.

The time discretizations of the electromagnetic fields are then obtained by applying the convolution quadrature to the representation formulas \eqref{eq:time-dependent-representation-E}--\eqref{eq:time-dependent-representation-H}:
\begin{align}
\label{eq:time-dependent-representation-E-tau}
\E^\tau & = - \mathcal{S}(\pttau)\bvar^\tau+\mathcal{D}(\pttau)\bpsi^\tau, \\
\label{eq:time-dependent-representation-H-tau}
 \B^\tau & = - \mathcal{D}(\pttau) \bvar^\tau - \mathcal{S}(\pttau) \bpsi^\tau .
\end{align}
Again by the composition rule, this is the convolution quadrature discretization
\begin{equation}\label{U-t}
\begin{pmatrix} \E^\tau \\ \B^\tau
\end{pmatrix}
= \U(\pttau) \ginc
\qquad \text{of}\qquad
\begin{pmatrix} \E \\  \B
\end{pmatrix}
= \U(\pt) \ginc,
\end{equation}
where we have by Theorem~\ref{th:time-dependent-well-posedness} that
$$
\U(s) = \begin{pmatrix}
-\mathcal{S}(s) & \mathcal{D}(s)
\\
-\mathcal{D}(s) & -\mathcal{S}(s)
\end{pmatrix} \A(s)^{-1} \begin{pmatrix} \Id \\ 0 \end{pmatrix}
:
\mVG' \to \H(\curl,\Omega)^2,
$$
for which the bound
$$
\| \U(s) \|_{\H(\curl,\Omega)^2 \leftarrow \mVG'} \le C_\sigma \dfrac{\abs{s}^3}{(\Re s)^{3/2}}, \quad\text{ for }\ \Re s \ge \sigma> \sigma_0\ge 0,
$$
is given in Theorem~\ref{th:time-harmonic-well-posedness}. Moreover, away  from the boundary we obtain by concatenating Lemmas~\ref{lemma:bound for evaluations}--\ref{lemma:bound for norms - away} and Theorem~\ref{prop:bie} that
on $\Omega_d=\{ \x\in \Omega\,:\,\dist(\x,\Gamma)> d\}$ with $d >0$, we have for $\Re s \ge \sigma> \sigma_0\ge 0$ that
$$
\| \U(s) \|_{(\bC^1(\overline\Omega_d)^3)^2\leftarrow \mVG'} +\| \U(s) \|_{\H(\curl,\Omega_d)^2 \leftarrow \mVG'} \le C_\sigma \abs{s}^5 e^{-d \Re s} ,
$$
where the $\bC^1(\overline\Omega_d)$-norm is the maximum norm on continuously differentiable functions and their derivatives on the closure of $\Omega_d$.
 
\begin{remark}
	Discrete fields $(\E^\tau,\B^\tau)$ generated through the discretized representation formulas \eqref{eq:time-dependent-representation-E-tau}--\eqref{eq:time-dependent-representation-H-tau} are, just as their continuous counterparts, divergence-free. This follows from the observation that the generating functions $\sum_{n\ge 0} \E^n \zeta^n, \sum_{n\ge 0} \B^n \zeta^n$ are divergence-free for $\abs{\zeta}<1$.
\end{remark}

Using $\U(s)$ in the role of $K(s)$  and these bounds as \eqref{KLM-bound} in Lemma~\ref{lem:RK-CQ} then directly yields the following result.

\begin{proposition} [Error bound of the semi-discretization in time]
\label{prop:cq} In the situation of Theorem~\ref{th:time-dependent-well-posedness},
consider the Runge--Kutta convolution quadrature based on the Radau IIA method with $m$ stages used for  the semi-discreti\-zation in time \eqref{bie-A-tau} and \eqref{eq:time-dependent-representation-E-tau}--\eqref{eq:time-dependent-representation-H-tau} of the boundary integral equation \eqref{bie-A-t} and the representation formulas \eqref{eq:time-dependent-representation-E}--\eqref{eq:time-dependent-representation-H}, respectively.
For $r>2m+3$, assume that $\ginc \in \bC^r([0,T],\mVG')$, vanishing at $t=0$ together with its first $r-1$ time derivatives. Then, the approximations to the electromagnetic fields $\E^n=\bigl[(\E^\tau)^{n-1}\bigr]_m$ and $\B^n=\bigl[(\B^\tau)^{n-1}\bigr]_m$ satisfy the following error bound of order $m-1/2$ at $t_n=n\tau\in[0,T]$:
\begin{align*}
\norm{ \begin{pmatrix} \E^n - \E(t_n) \\ \B^n - \B(t_n)
\end{pmatrix} } _{\H(\curl,\Omega)^2}
\le C \,\tau^{m-1/2} \, M(\ginc,t_n).
\end{align*}
On $\Omega_d=\{ x\in \Omega\,:\,\dist(x,\Gamma)> d\}$ with $d >0$, there is the full order $2m-1$:
\begin{align*}
\norm{ \begin{pmatrix} \E^n - \E(t_n) \\ \B^n - \B(t_n)
\end{pmatrix} }_{\bigl(\H(\curl,\Omega_d)\cap \bC^1(\overline\Omega_d)^3\bigr)^2}
\le C_d \,\tau^{2m-1} \,M(\ginc,t_n).
\end{align*}
Here, $M(\g,t)=	\|{\g^{(r)}(0)}\|_{\mVG'}
	+\int_0^t \|{\g^{(r+1)}(t')}\|_{\mVG'} \,\mathrm{d}t'
	$.
	The constants $C$ and $C_d$ are independent of $n$, $\tau$ and $g$, but depend on the final time~$T$. $C_d$ additionally depends on the distance $d$. In the case of the impedance operators \eqref{eq:thin_layer_ord1}--\eqref{eq:gibc_absorbing_ord2}, both $C$ and $C_d$ are independent of the small parameter $\delta$.
\end{proposition}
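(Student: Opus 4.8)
The plan is to derive both estimates from a \emph{single} application of the Runge--Kutta convolution quadrature error bound of Lemma~\ref{lem:RK-CQ}. The key structural fact, already recorded in~\eqref{U-t}, is that by the discrete composition rule~\eqref{comp-rule-tau} the exact fields and their time discretisation are convolutions of the \emph{same} operator family $\U(s)$ with the \emph{same} right-hand side $\ginc$: one has $(\E,\B)^\top=\U(\pt)\ginc$ and $(\E^\tau,\B^\tau)^\top=\U(\pttau)\ginc$. Consequently the error $(\E^n-\E(t_n),\B^n-\B(t_n))^\top$, with $\E^n=[(\E^\tau)^{n-1}]_m$ and $\B^n=[(\B^\tau)^{n-1}]_m$ (the last stage component, which is the right object since $c_m=1$ for Radau IIA), is exactly the convolution quadrature error of the single convolution $\U(\pt)\ginc$, so that no separate stability argument is needed. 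It then remains to feed the $s$-explicit bounds on $\U(s)$ collected after~\eqref{U-t} into Lemma~\ref{lem:RK-CQ}, taken with $\cqg=\ginc$, $\cqX=\mVG'$ and $q=m$; with these choices the $\cqg$-dependent factor in the bound of that lemma is exactly $M(\ginc,t_n)$.

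For the $\H(\curl,\Omega)^2$ estimate I use $\|\U(s)\|_{\H(\curl,\Omega)^2\leftarrow\mVG'}\le C_\sigma|s|^3/(\Re s)^{3/2}$, i.e.\ the bound~\eqref{KLM-bound} with $\kappa=3$ and $\nu=3/2$ and with $M_\sigma$ independent of $\delta$ by Theorem~\ref{th:time-harmonic-well-posedness}. Lemma~\ref{lem:RK-CQ} then yields the temporal order $\min(2m-1,\,m+1-\kappa+\nu)=\min(2m-1,\,m-\tfrac12)=m-\tfrac12$, which is the asserted bound; its regularity requirement $r>\max(2q-1+\kappa,2q-1,q+1)=2m+2$ is implied by the hypothesis $r>2m+3$.

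For the estimate on $\Omega_d$ I start from $\|\U(s)\|_{(\bC^1(\overline\Omega_d)^3)^2\leftarrow\mVG'}+\|\U(s)\|_{\H(\curl,\Omega_d)^2\leftarrow\mVG'}\le C_\sigma|s|^5e^{-d\Re s}$, which follows by concatenating Lemmas~\ref{lemma:bound for evaluations}--\ref{lemma:bound for norms - away} with Theorem~\ref{prop:bie}. The decisive point — and the main obstacle of the proof — is to exploit the exponential factor $e^{-d\Re s}$, which is inherited from the fundamental solution being evaluated at distance at least $d$ from $\Gamma$: since $\sup_{\Re s\ge\sigma}(\Re s)^\nu e^{-d\Re s}<\infty$ for every $\nu\ge0$, the above estimate can be written in the form~\eqref{KLM-bound} with $\kappa=5$ and $\nu$ arbitrary (the constant then depending on $\nu$, $d$ and $\sigma$). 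Choosing $\nu\ge m+3$ makes $m+1-\kappa+\nu\ge 2m-1$, so that Lemma~\ref{lem:RK-CQ} delivers the full temporal order $2m-1$, with a constant $C_d$ depending on $d$, $m$ and $T$ but not on $\tau$, $n$ or $\ginc$; one only has to keep in mind that the regularity requirement of Lemma~\ref{lem:RK-CQ} scales with the exponent $\kappa$ and has to be matched against the assumed regularity of $\ginc$. Finally, $\delta$-independence of all constants is inherited from the $\delta$-independence of the bounds in Theorems~\ref{th:time-harmonic-well-posedness} and~\ref{prop:bie} and Lemmas~\ref{lemma:bound for evaluations}--\ref{lemma:bound for norms - away}, none of which involves the small parameter; everything else reduces to substitution into Lemma~\ref{lem:RK-CQ}.
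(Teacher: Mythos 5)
Your proposal is correct and follows essentially the same route the paper takes: interpret $(\E,\B)=\U(\pt)\ginc$ and $(\E^\tau,\B^\tau)=\U(\pttau)\ginc$ so that the error is a pure convolution quadrature error, then feed the $s$-explicit bounds on $\U(s)$ (Theorem~\ref{th:time-harmonic-well-posedness} for $\H(\curl,\Omega)$, and Lemmas~\ref{lemma:bound for evaluations}--\ref{lemma:bound for norms - away} with Theorem~\ref{prop:bie} for $\Omega_d$, absorbing $e^{-d\Re s}$ into $(\Re s)^{-\nu}$) directly into Lemma~\ref{lem:RK-CQ} with $q=m$. One small remark, shared with the paper's own statement: for the $\Omega_d$ bound with $\kappa=5$, Lemma~\ref{lem:RK-CQ} as stated requires $r>2q-1+\kappa=2m+4$, which is not strictly implied by $r>2m+3$; you flag that the regularity must be ``matched against'' $\kappa$, but if you carry out that check you should note the off-by-one and either demand $r>2m+4$ or argue that the lemma's regularity condition is not tight.
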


We remark that for acoustic scattering from a sound-soft obstacle, full-order convergence away from the boundary for the Runge-Kutta convolution quadrature time discretization was previously proved in \cite{BLM11}. Proposition~\ref{prop:cq} shows that this favourable error behaviour extends to the electromagnetic scattering with generalized impedance boundary conditions.

\section{Full discretization}
\label{sec:full}

We use a Galerkin approximation of the boundary integral equation \eqref{bie-A-tau} with boundary element spaces $\V_h\subset \mVG$ and $\X_h \subset \mXG$ corresponding to a family of triangulations with mesh width $h$. We choose both $\V_h$ and $\X_h$ to be the Raviart--Thomas boundary element space of order $k\ge 0$ \cite{RT77}, which is defined on the unit triangle
$\widehat{K}$ as reference element by
\begin{align*}
\text{RT}_k(\widehat{K}) = \left\{\x\mapsto \p_1(\x)+p_2(\x) \x \,:\  \p_1\in P_k(\widehat{K})^2\!,\  p_2\in P_k(\widehat{K})\right\},
\end{align*}
where $P_k(\widehat{K})$ is the polynomial space of degree $k$ on $\widehat{K}$.
Raviart--Thomas elements on an arbitrary grid are then obtained in the standard way by piecewise pull-back to the reference element.

We will use the following approximation results, which are obtained from the results collected in Lemma 15 and Theorem 14 of \cite{BH03}; see also the original references \cite[Section~III.3.3]{BreF91} and \cite{BC03}.
Here we use the same notation $\H^p_\times(\Gamma)=\gaT \H^{p+1/2}(\Omega)$ as in \cite{BH03}.

\begin{lemma} \label{lem:RT}
Let $X_h=V_h$ be the $k$-th order Raviart--Thomas boundary element space on $\Gamma$. For every $\bxi\in\mXG\cap \H^{k+1}_\times(\Gamma)$ and $\bupsilon\in\mVG\cap \H^{k+1}_\times(\Gamma)$, with the space $\mVG$ of Lemma~\ref{lem:EN} or Lemma~\ref{lem:absorb}, the best-approximation error is bounded by
\begin{align*}
&\inf_{\bxi_h\in X_h} \| \bxi_h - \bxi \|_{\mXG} \le C h^{k+3/2} \|\bxi \|_{\H^{k+1}_\times(\Gamma)},
\\
&\inf_{\bupsilon_h \in\V_h} \| \bupsilon_h - \bupsilon\|_{\mVG} \le C h^{k+1} \|\bupsilon \|_{\H^{k+1}_\times(\Gamma)}.
\end{align*}
\end{lemma}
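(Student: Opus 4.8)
\smallskip
\noindent\textbf{Proof strategy.} The plan is to deduce both estimates from the known best-approximation properties of $k$-th order Raviart--Thomas boundary elements collected in \cite[Lemma~15 and Theorem~14]{BH03} (see also \cite{BreF91,BC03}), adding only an elementary bookkeeping that makes the dependence on the small parameter $\delta$ in the $\mVG$-norm explicit. Where a concrete competitor in the infimum is needed it will be the Raviart--Thomas interpolant $r_h(\cdot)$, whose defining property is the commuting diagram $\divG\, r_h\bphi = \pi_h\,\divG\bphi$ with $\pi_h$ the $L^2(\Gamma)$-orthogonal projection onto piecewise polynomials of degree $\le k$.

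First I would treat the $\mXG$-estimate. This is precisely \cite[Theorem~14]{BH03}: since $\mXG$ is a trace space of (tangential) order $-1/2$, the elementwise polynomial approximation of order $h^{k+1}$ is boosted by half a power of $h$ through an Aubin--Nitsche duality argument carried out on $\Gamma$, yielding the rate $h^{k+3/2}$ in terms of $\|\bxi\|_{\H^{k+1}_\times(\Gamma)}$. No $\delta$ enters here, since the $\mXG$-norm does not involve the parameter.

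For the $\mVG$-estimate I would take $\bupsilon_h=r_h\bupsilon$ and use the orthogonal splitting $\|\bupsilon-\bupsilon_h\|_{\mVG}^2=\|\bupsilon-\bupsilon_h\|_{\mXG}^2+|\bupsilon-\bupsilon_h|_{\mVG}^2$. The first term is bounded by $C\,h^{2k+3}\|\bupsilon\|_{\H^{k+1}_\times(\Gamma)}^2$ by the estimate just recalled, hence by $C\,h^{2k+2}\|\bupsilon\|_{\H^{k+1}_\times(\Gamma)}^2$ for bounded mesh width. For the seminorm term, in the setting of Lemma~\ref{lem:EN} one has $|\bupsilon-\bupsilon_h|_{\mVG}^2=\delta\bigl(\|\bupsilon-\bupsilon_h\|_{\bL^2(\Gamma)}^2+\|\divG(\bupsilon-\bupsilon_h)\|_{L^2(\Gamma)}^2\bigr)$, and the standard $\H(\divG,\Gamma)$-approximation estimate for Raviart--Thomas elements (\cite[Lemma~15]{BH03}, using $\divG\, r_h=\pi_h\divG$ for the divergence term) bounds the bracket by $C\,h^{2k+2}\|\bupsilon\|_{\H^{k+1}_\times(\Gamma)}^2$; since $0<\delta\le1$, the prefactor $\delta$ can simply be dropped and the resulting constant is $\delta$-independent. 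In the setting of Lemma~\ref{lem:absorb}, where $|\bupsilon-\bupsilon_h|_{\mVG}^2=\delta\|\bupsilon-\bupsilon_h\|_{\bL^2(\Gamma)}^2$, the same reasoning applies using only the $\bL^2(\Gamma)$-approximation estimate. Adding the two contributions gives the claimed bound $C\,h^{k+1}\|\bupsilon\|_{\H^{k+1}_\times(\Gamma)}$.

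The only genuinely nontrivial ingredient is the half-order gain behind the $\mXG$-estimate, i.e.\ the duality argument on the Lipschitz surface $\Gamma$ together with the associated mapping properties of the tangential trace; this is exactly why we invoke \cite{BH03} rather than reprove it. The remaining steps --- the orthogonal splitting of the $\mVG$-norm and the observation $\delta\le1$ --- are routine.
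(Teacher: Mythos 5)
Your proposal is correct and follows essentially the same route as the paper, which gives no separate proof of this lemma but obtains it directly from Lemma~15 and Theorem~14 of \cite{BH03} (with \cite{BreF91,BC03} behind them), plus the same routine bookkeeping for the $\mVG$-norm and the factor $\delta$.

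One small caution: in the $\mVG$-estimate you take the Raviart--Thomas interpolant $r_h\bupsilon$ as competitor and claim its $\mXG$-error is $O(h^{k+3/2})$ ``by the estimate just recalled''; that recalled estimate is a best-approximation bound realized by the projection of \cite{BC03}, not by $r_h$, and whether that projection (or any operator attaining the $h^{k+3/2}$ rate) is simultaneously well behaved in the $\mVG$-norm is exactly the open point of Remark~\ref{remark:bestappprox}. The slip is harmless here because only $O(h^{k+1})$ is needed for that term, which follows for $r_h\bupsilon$ from $\|\cdot\|_{\mXG}\le C\bigl(\|\cdot\|_{\bL^2(\Gamma)}+\|\divG\cdot\|_{L^2(\Gamma)}\bigr)$ and the standard $\H(\divG,\Gamma)$ interpolation estimates you already invoke; with that one-line correction the argument is complete.
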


\begin{remark}
\label{remark:bestappprox}
We would have expected that the best-approximation error bound in the $\mVG$-norm is $O(h^{k+3/2}+ \delta^{1/2} h^{k+1})$, in analogy to the situation for acoustic  generalized impedance boundary conditions~\cite{BLN20}.
This would, however, require proving the $\mVG$-norm stability of the projection of \cite{BC03} from $\mXG$ to $\X_h$ that was used to show the best-approximation estimate in $\mXG$. If at all possible, this is in any case beyond the scope of this paper.
\end{remark}

The Galerkin approximation of the time-discretized boundary integral equation \eqref{bie-A-tau} on $\V_h\times \X_h$ then reads
\begin{align}\label{bie-A-tau-h}
\left\langle
\begin{pmatrix}
\bupsilon_h \\ \bxi_h
\end{pmatrix}\!,
\A(\pttau)\begin{pmatrix}\bvar_h^\tau \\ \bpsi_h^\tau\end{pmatrix}\right\rangle_\Gamma=\langle\bupsilon_h,\g\inc\rangle_\Gamma \quad \quad \forall \,(\bupsilon_h,\bxi_h) \in (\V_h\times \X_h)^m.
\end{align}
This determines the approximate boundary densities $\bvar_h^\tau=\bigl((\bvar_h^\tau)^n\bigr)$
with $(\bvar_h^\tau)^n$ $=\bigl( (\bvar_h^\tau)^n_i \bigr)_{i=1}^m\in \V_h^m$ and $\bpsi_h^\tau=\bigl((\bpsi_h^\tau)^n\bigr)$
with $(\bpsi_h^\tau)^n=\bigl( (\bpsi_h^\tau)^n_i \bigr)_{i=1}^m\in X_h^m$, which are used to define the approximations to the electromagnetic fields via the time-discrete representation formulas
\begin{align}
\label{eq:time-dependent-representation-E-tau-h}
\E^\tau_h & = - \mathcal{S}(\pttau)\bvar^\tau_h+\mathcal{D}(\pttau)\bpsi^\tau_h, \\
\label{eq:time-dependent-representation-H-tau-h}
\B^\tau_h & = - \mathcal{D}(\pttau) \bvar^\tau_h - \mathcal{S}(\pttau) \bpsi^\tau_h .
\end{align}
We then have the following error bounds for the full discretization, obtained under regularity assumptions that are presumably stronger than necessary.

\begin{theorem} [Error bound of the  full discretization]
\label{thm:error-full} In the situation of Theorem~\ref{th:time-dependent-well-posedness},
consider \\
---\
Runge--Kutta convolution quadrature based on the Radau IIA method with $m\ge 2$ stages used for  the time discreti\-zation \eqref{bie-A-tau} and \eqref{eq:time-dependent-representation-E-tau}--\eqref{eq:time-dependent-representation-H-tau} of the boundary integral equation \eqref{bie-A-t} and the representation formulas \eqref{eq:time-dependent-representation-E}--\eqref{eq:time-dependent-representation-H}, respectively; and\\
---\  Raviart--Thomas boundary elements of order $k$ for the space discretization of  the boundary integral equation~\eqref{bie-A-t}.
\\
For $r>2m+3$, let $\ginc \in \bC^r([0,T],\mVG')$ vanish at $t=0$ together with its first $r-1$ time derivatives. Furthermore, it is
assumed that the solution $(\bvar,\bpsi)$ of the boundary integral equation~\eqref{bie-A-t} is in
$\bC^{10}([0,T],\H^{k+1}_\times(\Gamma)^2)$, vanishing at $t=0$ together with its time derivatives.
\\
Then, the approximations to the electromagnetic fields $\E^n_h=\bigl[(\E^\tau_h)^{n-1}\bigr]_m$ and $\B^n_h=\bigl[(\B^\tau_h)^{n-1}\bigr]_m$ satisfy the following error bound of order $m-1/2$ in time and order $k+1$ in space at $t_n=n\tau\in[0,T]$:
\begin{align*}
\norm{ \begin{pmatrix} \E^n_h - \E(t_n) \\ \B^n_h - \B(t_n)
\end{pmatrix} } _{\H(\curl,\Omega)^2}
\le C \bigl(\tau^{m-1/2}+h^{k+1}\bigr).
\end{align*}
On $\Omega_d=\{ \x\in \Omega\,:\,\dist(\x,\Gamma)> d\}$ with $d >0$, there is the full order $2m-1$ in time:
\begin{align*}
\norm{ \begin{pmatrix} \E^n_h - \E(t_n) \\ \B^n_h - \B(t_n)
\end{pmatrix} }_{\bigl(\H(\curl,\Omega_d)\cap C^1(\overline\Omega_d)^3\bigr)^2}
\le C_d  \bigl(\tau^{2m-1}+h^{k+1}\bigr).
\end{align*}
	The constants $C$ and $C_d$ are independent of $n$, $\tau$ and $h$, but depend on the final time~$T$ and on the regularity of $\ginc$ and $(\bvar,\bpsi)$ as stated. $C_d$ additionally depends on the distance $d$. In the case of the impedance operators \eqref{eq:thin_layer_ord1}--\eqref{eq:gibc_absorbing_ord2}, both $C$ and $C_d$ are independent of the small parameter $\delta$.
\end{theorem}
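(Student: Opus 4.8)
The plan is to split the error into a \emph{time-discretization error}, handled as in Proposition~\ref{prop:cq}, and a \emph{space-discretization error}, controlled by a frequency-explicit quasi-optimality estimate for the Galerkin scheme combined with the approximation properties of the Raviart--Thomas space and the passage from the Laplace domain to the time domain.

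\textit{Galerkin scheme in the Laplace domain.} First I would set up, for $\Re s>\sigma_0$, the operator $\A_h(s)\colon\V_h\times\X_h\to(\V_h\times\X_h)'$ obtained by restricting the sesquilinear form $\langle\cdot,\A(s)\cdot\rangle_\Gamma$ of \eqref{Asoperator} to $\V_h\times\X_h$. Since the boundedness of Lemma~\ref{lem:A-boundedness} and --- decisively --- the coercivity of Lemma~\ref{lem:A-coercivity} pass verbatim to the subspace, $\A_h(s)$ is invertible with $\|\A_h(s)^{-1}\|\le C_\sigma\,|s|^2/\Re s$ uniformly in $h$; hence $\A_h(\pttau)^{-1}$ is well defined and, by the discrete composition rule \eqref{comp-rule-tau}, the fully discrete fields \eqref{eq:time-dependent-representation-E-tau-h}--\eqref{eq:time-dependent-representation-H-tau-h} are the Runge--Kutta convolution quadrature $\U_h(\pttau)\ginc$ of the operator $\U_h(s)$ obtained from $\U(s)$ in \eqref{U-t} by replacing $\A(s)^{-1}$ with $\A_h(s)^{-1}Q_h$, where $Q_h$ restricts functionals to $\V_h\times\X_h$. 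Galerkin orthogonality together with Lemmas~\ref{lem:A-boundedness}--\ref{lem:A-coercivity} yields the quasi-optimality estimate $\|(\wvarphi_h,\wpsi_h)-(\wvarphi,\wpsi)\|_{\mVG\times\mXG}\le C_\sigma\,\tfrac{|s|^4}{(\Re s)^2}\inf_{w_h\in\V_h\times\X_h}\|(\wvarphi,\wpsi)-w_h\|_{\mVG\times\mXG}$ for the Laplace-domain densities, and Lemma~\ref{lem:RT} (for $\delta\le1$, so that the weaker rate $h^{k+1}$ dominates) bounds the infimum by $C\,h^{k+1}\|(\wvarphi,\wpsi)\|_{\H^{k+1}_\times(\Gamma)^2}$. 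Inserting this into the representation formulas and using the potential bounds of Lemma~\ref{lem:potential-SD-bound}, respectively of Lemmas~\ref{lemma:bound for evaluations}--\ref{lemma:bound for norms - away} on $\Omega_d$, the operator $\mathcal G_h(s)$ that maps the exact density to the field error obeys $\|\mathcal G_h(s)\|_{\H(\curl,\Omega)^2\leftarrow\H^{k+1}_\times(\Gamma)^2}\le C_\sigma\,\tfrac{|s|^6}{(\Re s)^3}\,h^{k+1}$ and, on $\Omega_d$, $\|\mathcal G_h(s)\|_{((\bC^1(\overline\Omega_d)^3)^2\cap\H(\curl,\Omega_d)^2)\leftarrow\H^{k+1}_\times(\Gamma)^2}\le C_{\sigma,d}\,|s|^{7}e^{-d\Re s}\,h^{k+1}$.

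\textit{Transfer to the time domain and error split.} Next I would write $\E^n_h-\E(t_n)=(\E^n_h-\E^n)+(\E^n-\E(t_n))$, where $\E^n=[\U(\pttau)\ginc]^{n-1}_m$ is the time-semidiscrete field of Proposition~\ref{prop:cq}, so that the second summand is $O(\tau^{m-1/2})$ in the $\H(\curl,\Omega)^2$ norm and $O(\tau^{2m-1})$ in the $\Omega_d$ norms. For the first summand the discrete composition rule gives $\E^n_h-\E^n=[\mathcal G_h(\pttau)(\bvar^\tau,\bpsi^\tau)]^{n-1}_m$ with $(\bvar^\tau,\bpsi^\tau)=\A^{-1}(\pttau)(\ginc,\mathbf{0})$, whereas for the exact density $(\E_h-\E,\B_h-\B)=\mathcal G_h(\pt)(\bvar,\bpsi)$ by the operational calculus of Section~\ref{subsec:Z}; applying \eqref{sobolev-bound} to $\mathcal G_h(\pt)$ and the embedding $\H^r_0(0,T;\cdot)\subset\bC^0([0,T];\cdot)$ for $r\ge1$ shows $\|(\E_h-\E)(t_n)\|_{\H(\curl,\Omega)}\le C_T\,h^{k+1}$, and likewise in the $\Omega_d$ norms, the right-hand sides being finite --- and, in fact, $\widehat{(\bvar,\bpsi)}(s)$ decaying rapidly in $|s|$ --- by the assumed $\bC^{10}$-regularity of $(\bvar,\bpsi)$. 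Comparing $[\mathcal G_h(\pttau)(\bvar^\tau,\bpsi^\tau)]^{n-1}_m$ with $(\E_h-\E)(t_n)=[\mathcal G_h(\pt)(\bvar,\bpsi)](t_n)$ leaves a Runge--Kutta convolution quadrature error for the operator $\U_h(s)-\U(s)$ which --- composed with the rapidly decaying $\widehat{(\bvar,\bpsi)}(s)$ and being itself $h^{k+1}$-small --- is bounded by $C\,h^{k+1}$ uniformly in $\tau$ (respectively $C_d\,h^{k+1}$ on $\Omega_d$) by Lemma~\ref{lem:RK-CQ} and the stated regularity of $\ginc$ and $(\bvar,\bpsi)$. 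Hence $\E^n_h-\E^n=O(h^{k+1})$, and adding the two summands, and arguing likewise for $\B$, yields the asserted estimates; every constant is $\delta$-independent because all Laplace-domain bounds used are $\delta$-uniform and Lemma~\ref{lem:RT} is $\delta$-uniform for $\delta\le1$.

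\textit{Main obstacle.} The decisive new ingredient is the $h$-uniform quasi-optimality estimate with the sharp constant $|s|^4/(\Re s)^2$, which relies on the Laplace-domain coercivity of $\A(s)$ (Lemma~\ref{lem:A-coercivity}) rather than a mere inf--sup condition. The remaining delicate point will be to ensure that the space discretization does not degrade the temporal orders $m-\tfrac12$ and $2m-1$: this is exactly where the strong temporal-and-spatial regularity of $(\bvar,\bpsi)$ is used, through the rapid decay of $\widehat{(\bvar,\bpsi)}(s)$, so that the convolution quadrature error generated by the (already $h^{k+1}$-small) Galerkin error operator is itself of order $h^{k+1}$ uniformly in $\tau$. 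As noted in Remark~\ref{remark:bestappprox}, absent a further duality argument the spatial order cannot be improved beyond $h^{k+1}$, owing to the $\mVG$-norm best-approximation rate of the Raviart--Thomas space.
\qed
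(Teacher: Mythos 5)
Your proposal follows essentially the same architecture as the paper's proof: derive the Galerkin quasi-optimality estimate in the Laplace domain via the coercivity of Lemma~\ref{lem:A-coercivity} and C\'ea's lemma; bound the associated spatial error operator $\mathcal{G}_h(s)=\W(s)\mathcal{E}_h(s)$ from $\H^{k+1}_\times(\Gamma)^2$; split the total error into the convolution-quadrature time-discretization error plus the space-discretization contribution; and bound the latter by a Runge--Kutta CQ argument.

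There is, however, one place where your account is imprecise in a way that matters. You claim the CQ error "for the operator $\U_h(s)-\U(s)$" applied to $\ginc$ is $O(h^{k+1})$ uniformly in $\tau$ because $\U_h-\U$ is "itself $h^{k+1}$-small". That is not correct as stated: $\U_h(s)-\U(s)=\W(s)\mathcal{E}_h(s)\,\A(s)^{-1}(\Id,0)^\top$ carries the factor $h^{k+1}$ only through $\mathcal{E}_h(s)$, and $\mathcal{E}_h(s)$ is $O(h^{k+1})$ solely as an operator from $\H^{k+1}_\times(\Gamma)^2$, not from $\mVG\times\mXG$. Since $\A(s)^{-1}(\Id,0)^\top$ maps $\mVG'$ only into $\mVG\times\mXG$, the composed operator is \emph{not} $O(h^{k+1})$ from $\mVG'$ to $\H(\curl,\Omega)^2$, and applying Lemma~\ref{lem:RK-CQ} to $\U_h-\U$ with data $\ginc$ cannot produce the $h^{k+1}$ factor. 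This is exactly why the paper instead takes $\W(s)\mathcal{E}_h(s)$ as the kernel $\cqK(s)$ in Lemma~\ref{lem:RK-CQ}, with $\cqX=\H^{k+1}_\times(\Gamma)^2$, $M_\sigma\le C_\sigma h^{k+1}$, $\kappa=6$, $\nu=3$, applied to the data $(\bvar,\bpsi)\in\bC^{10}([0,T],\H^{k+1}_\times(\Gamma)^2)$; here the assumed spatial Sobolev regularity of the \emph{density} is what supplies the $h^{k+1}$. Your parenthetical "composed with the rapidly decaying $\widehat{(\bvar,\bpsi)}(s)$" gestures at this, but the way you have written the argument the $h$-factor has not been justified.

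The second point where you genuinely deviate from the paper is the error split for the $\Omega_d$ bound. You keep the split $(\E^n_h-\E^n)+(\E^n-\E(t_n))$ for both norms. The paper, after using that split for the $\H(\curl,\Omega)$ estimate (where the second summand supplies the $O(\tau^{m-1/2})$), switches for the $\Omega_d$ estimate to the split $(\E^n_h-\E_h(t_n))+(\E_h(t_n)-\E(t_n))$: the first summand is then the CQ error for $\U_h(s)=\W(s)\bL_h(s)$ directly, and the exponential decay $e^{-d\Re s}$ of Lemmas~\ref{lemma:bound for evaluations}--\ref{lemma:bound for norms - away} gives the full temporal order $2m-1$ with $q=m$, while the second summand contributes $O(h^{k+1})$ by the spatial semi-discretization estimate \eqref{error-h-d}. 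Your split can in principle also be made to work on $\Omega_d$ --- the first summand is $O(h^{k+1})$ and the second is $O(\tau^{2m-1})$ by Proposition~\ref{prop:cq} --- but it then forces you to run the CQ argument for $\W(s)\mathcal{E}_h(s)$ on $\Omega_d$ (with $\kappa=7$ because of the extra power of $|s|$ from Lemma~\ref{lemma:bound for norms - away}), which tightens the required $r$ in Lemma~\ref{lem:RK-CQ}; with $q=2$ this needs $r>10$, which the assumed $\bC^{10}$ regularity of $(\bvar,\bpsi)$ does not quite give, so you would have to take a smaller $q$ (e.g.~$q=1$) to fit the stated hypotheses. The paper's change of split avoids this bookkeeping and lets all the temporal order in the $\Omega_d$ estimate come from the CQ error for $\U_h$, exploiting the exponential decay; your uniform-split version is tidier conceptually but needs this extra care with the parameters of Lemma~\ref{lem:RK-CQ}.
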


\begin{proof} We structure the proof into three parts (a)--(c).

(a) {\it (Discretized time-harmonic boundary integral equation).\/} We first consider the time-harmonic boundary integral equation \eqref{bie-weak-A}, for $\Re s \ge \sigma>\sigma_0 \ge 0$.
We denote by $\bL_h(s):\mVG'\to \V_h\times \X_h$ the solution operator $\widehat \g\mapsto (\wvarphi_h,\wpsi_h)$ of the Galerkin approximation in $\V_h\times \X_h$,
\begin{align}\label{bie-A-tau-h-s-g}
\left\langle
\begin{pmatrix}
\bupsilon_h \\ \bxi_h
\end{pmatrix}\!,
\A(s)\begin{pmatrix}\wvarphi_h \\ \wpsi_h\end{pmatrix}
\right\rangle_\Gamma=
\langle\bupsilon_h,\widehat \g\rangle_\Gamma
\quad \quad \forall \,(\bupsilon_h,\bxi_h) \in \V_h\times \X_h,
\end{align}
which by the bound of $A(s)$ in Lemma~\ref{lem:A-boundedness}, the coercivity estimate of Lemma~\ref{lem:A-coercivity} and the Lax--Milgram lemma is bounded by
\begin{equation}\label{L-h-bound}
\| \bL_h(s) \|_{\V_h\times \X_h \leftarrow \mVG'} \le \frac 1{c_\sigma} \, \frac{|s|^2}{\Re s} .
\end{equation}
Next we consider the associated Ritz projection $\bR_h(s):\mVG\times\mXG \to \V_h \times \X_h$, which maps $(\wvarphi,\wpsi)\in\mVG\times\mXG$ to $(\wvarphi_h,\wpsi_h)\in \V_h\times \X_h$ determined by
\begin{align*}%\label{bie-A-tau-h-s}
\left\langle
\begin{pmatrix}
\bupsilon_h \\ \bxi_h
\end{pmatrix}\!,
\A(s)\begin{pmatrix}\wvarphi_h \\ \wpsi_h\end{pmatrix}
\right\rangle_\Gamma=
\left\langle
\begin{pmatrix}
\bupsilon_h \\ \bxi_h
\end{pmatrix},
\A(s)\begin{pmatrix}\wvarphi \\ \wpsi\end{pmatrix}\right\rangle_\Gamma
%\langle\bupsilon_h,\g\inc\rangle_\Gamma
\quad \quad \forall \,(\bupsilon_h,\bxi_h) \in \V_h\times \X_h.
\end{align*}
Again by Lemmas~\ref{lem:A-boundedness} and~\ref{lem:A-coercivity} and the Lax--Milgram lemma, this problem has a unique solution $(\wvarphi_h,\wpsi_h)\in \V_h\times \X_h$, and by C\'ea's lemma,
$$
\left\| \begin{pmatrix}\wvarphi_h \\ \wpsi_h\end{pmatrix} - \begin{pmatrix}\wvarphi \\ \wpsi\end{pmatrix} \right\|_{ \mVG\times\mXG}
\le \frac{C_\sigma}{c_\sigma}\left(\frac{|s|^2}{\Re s} \right)^2
\inf_{(\bupsilon_h,\bxi_h)\in \V_h \times \X_h}
\left\| \begin{pmatrix}\bupsilon_h \\ \bxi_h\end{pmatrix} - \begin{pmatrix}\wvarphi \\ \wpsi\end{pmatrix} \right\|_{ \mVG\times\mXG},
$$
where the right-hand side is further bounded by Lemma~\ref{lem:RT}.
We can thus view the associated error operator $\mathcal{E}_h(s) = \bR_h(s)-\Id$ as a bounded operator from $\H^{k+1}_\times(\Gamma)^2$ to $\mVG\times\mXG$ with the bound, for $\Re s \ge \sigma >\sigma_0\ge 0$,
\begin{equation}\label{Err-h-bound}
\| \mathcal{E}_h(s) \|_{\mVG\times\mXG \leftarrow \H^{k+1}_\times(\Gamma)^2} \le \widetilde C_\sigma \frac{|s|^4}{(\Re s)^2}\, h^{k+1}.
\end{equation}

(b) {\it (Error of the spatial semi-discretization).}
The spatial semi-discretization of the time-dependent boundary integral equation \eqref{bie-A-t},
\begin{align}\label{bie-A-h}
\left\langle
\begin{pmatrix}
\bupsilon_h \\ \bxi_h
\end{pmatrix}\!,
\A(\pt)\begin{pmatrix}\bvar_h \\ \bpsi_h\end{pmatrix}\right\rangle_\Gamma=\langle\bupsilon_h,\g\inc\rangle_\Gamma \quad \quad \forall \,(\bupsilon_h,\bxi_h) \in (\V_h\times \X_h)^m,
\end{align}
then has the unique solution
$$
\begin{pmatrix}\bvar_h \\ \bpsi_h\end{pmatrix} = \bL_h(\pt)\ginc = \bR_h(\pt) \begin{pmatrix}\bvar \\ \bpsi\end{pmatrix} ,
$$
where $(\bvar,\bpsi)^\top=\A^{-1}(\pt)(\ginc,0)^\top$ is the solution of \eqref{bie-A-t}.
We abbreviate
$$
\W(s) = \begin{pmatrix} -\mathcal{S}(s) & \mathcal{D}(s) \\
-\mathcal{D}(s) & -\mathcal{S}(s) ,
\end{pmatrix}
$$
and set
\begin{equation}\label{U-h}
\U_h(s) = \W(s) \bL_h(s) : \mVG' \to \H(\curl,\Omega)^2.
\end{equation}
By \eqref{L-h-bound} and Lemma~\ref{lem:transmission}, this is bounded by
\begin{equation}\label{U-h-bound}
\| \U_h(s) \|_{\H(\curl,\Omega)^2 \leftarrow \mVG'} \le \bar C_\sigma\,  \frac{|s|^4}{(\Re s)^2}.
\end{equation}
The spatial semi-discretization of the scattering problem is then obtained as
$$
\begin{pmatrix} \E_h \\ \B_h
\end{pmatrix}
= \U_h(\pt) \ginc.
$$
In view of \eqref{U-t}, its error is
\begin{align*}
\begin{pmatrix} \E_h \\ \B_h
\end{pmatrix} -
\begin{pmatrix} \E \\  \B
\end{pmatrix}
&=
\U_h(\pt)\ginc - \U(\pt)\ginc = \W(\pt)\begin{pmatrix}\bvar_h \\ \bpsi_h\end{pmatrix} - \W(\pt)\begin{pmatrix}\bvar \\ \bpsi\end{pmatrix}
\\
&= \W(\pt)(\bR_h - \Id)\begin{pmatrix}\bvar \\ \bpsi\end{pmatrix} = \W(\pt) \,\mathcal{E}_h(\pt) \!\begin{pmatrix}\bvar \\ \bpsi\end{pmatrix}.
\end{align*}
Using the bound of Lemma~\ref{lem:transmission} for the potential operator $\W(s)$, the bound \eqref{Err-h-bound} for the error operator $\mathcal{E}_h(s)$, and the
bound \eqref{sobolev-bound} (with $\kappa=6$) for their composition, and finally the Sobolev embedding
$H^1(0,T;H)\subset C([0,T],H)$ for any Hilbert space $H$, we obtain for the error of the spatial semi-discretization
\begin{align}\label{error-h}
&\max_{0\le t \le T} \left\| \begin{pmatrix} \E_h(t) \\  \B_h(t)
\end{pmatrix} -
\begin{pmatrix} \E(t) \\  \B(t)
\end{pmatrix}
\right\|_{\H(\curl,\Omega)^2}
\\
\nonumber
&\le
C  \left\| \begin{pmatrix} \E_h \\  \B_h
\end{pmatrix} -
\begin{pmatrix} \E \\  \B
\end{pmatrix}
\right\|_{\H^1_0(0,T;\H(\curl,\Omega)^2)}
\le C_T \, h^{k+1} \left\| \begin{pmatrix}\bvar \\ \bpsi\end{pmatrix} \right\|_{\H^7_0(0,T;\H^{k+1}_\times(\Gamma)^2)}.
\end{align}
Using the same argument with the pointwise bounds away from the boundary given by Lemmas~\ref{lemma:bound for evaluations}
and~\ref{lemma:bound for derivatives}, we further obtain
\begin{equation}\label{error-h-d}
\max_{0\le t \le T} \left\| \begin{pmatrix} \E_h(t) \\  \B_h(t)
\end{pmatrix} -
\begin{pmatrix} \E(t) \\  \B(t)
\end{pmatrix}
\right\|_{\bC^1(\overline\Omega_d)^2}
%\\
%\nonumber
\le C_T \, h^{k+1} \left\| \begin{pmatrix}\bvar \\ \bpsi\end{pmatrix} \right\|_{\H^8_0(0,T;\H^{k+1}_\times(\Gamma)^2)}.
\end{equation}

(c) {\it (Error of the full discretization).\/}  The total error is
$$
%\begin{pmatrix} \E^n_h \\  \B^n_h
%\end{pmatrix}
%-
%\begin{pmatrix} \E(t_n) \\  \B(t_n)
%\end{pmatrix}
%=
%\left(
\begin{pmatrix} \E^n_h \\  \B^n_h
\end{pmatrix}
-
\begin{pmatrix} \E^n \\  \B^n
\end{pmatrix}
%\right)
\quad\
+
\quad\
%\left(
\begin{pmatrix} \E^n \\  \B^n
\end{pmatrix}
-
\begin{pmatrix} \E(t_n) \\  \B(t_n)
\end{pmatrix}
%\right).
.
$$
The second difference is the error of the temporal semi-discretization, which is bounded by $O(\tau^{m-1/2})$ in the $\H(\curl,\Omega)^2$ norm in Proposition~\ref{prop:cq}. The first difference
is written as (omitting here the superscript $n-1$ and subscript $m$)
\begin{align*}
\W(\pttau)\mathcal{E}_h(\pttau) \begin{pmatrix}\bvar \\ \bpsi\end{pmatrix} &=
\left(\W(\pttau)\mathcal{E}_h(\pttau) \begin{pmatrix}\bvar \\ \bpsi\end{pmatrix} -
\W(\pt)\mathcal{E}_h(\pt)  \begin{pmatrix}\bvar \\ \bpsi\end{pmatrix} \right)
\\
&+ \
\W(\pt)\mathcal{E}_h(\pt) \begin{pmatrix}\bvar \\ \bpsi\end{pmatrix} .
\end{align*}
The last term is the error of the spatial semi-discretization studied in part~(b),  which is bounded by \eqref{error-h}. The difference written in brackets on the right-hand side is a convolution quadrature error, which can be bounded by Lem\-ma~\ref{lem:RK-CQ}. This gives an $O(h^{k+1})$ error in the $\H(\curl,\Omega)^2$ norm, using that by Lemma~\ref{lem:transmission} and \eqref{Err-h-bound} we have here $M_\sigma\le C_\sigma h^{k+1}$, $\kappa=6$, $\nu=3$ in \eqref{KLM-bound} with $\W(s)\mathcal{E}_h(s)$ in the role of $\K(s)$,
 and choosing $q=2$ and $r=10>2q-1+\kappa$. Note that here $\min(2q-1,q+1-\kappa+\nu)=q-2=0$. Altogether, this yields the stated $O(\tau^{m-1/2}+h^{k+1})$ error bound in the $\H(\curl,\Omega)^2$ norm.

To prove the full-order error bound away from the boundary, we rewrite the error as
$$
%\begin{pmatrix} \E^n_h \\  \B^n_h
%\end{pmatrix}
%-
%\begin{pmatrix} \E(t_n) \\  \B(t_n)
%\end{pmatrix}
%=
%\left(
\begin{pmatrix} \E^n_h \\  \B^n_h
\end{pmatrix}
-
\begin{pmatrix} \E_h(t_n) \\  \B_h(t_n)
\end{pmatrix}
%\right)
\quad\
+
\quad\
%\left(
\begin{pmatrix} \E_h(t_n) \\  \B_h(t_n)
\end{pmatrix}
-
\begin{pmatrix} \E(t_n) \\  \B(t_n)
\end{pmatrix}
%\right).
.
$$
The second difference is the error of the spatial semi-discretization studied in part~(b). The first difference is a convolution quadrature error for the transfer operator $\U_h(s)$ of \eqref{U-h}:
$$
\begin{pmatrix} \E^n_h \\  \B^n_h
\end{pmatrix}
-
\begin{pmatrix} \E_h(t_n) \\  \B_h(t_n)
\end{pmatrix}
=
\Bigl[\bigl(\U_h(\pttau)\ginc \bigr)^{n-1}\Bigr]_m - \U_h(\pt)\ginc(t_n).
$$
(Estimating this error in the $\H(\curl,\Omega)^2$ norm by Lemma~\ref{lem:RK-CQ} would only give an $O(\tau^{m-1})$ bound instead of the stated $O(\tau^{m-1/2})$ bound, which is why we chose a different path before.)
%From Lemma~\ref{lem:RK-CQ} with $U_h(s)$ bounded by \eqref{U-h-bound} in the role of $K(s)$,
%we then obtain the stated error bound in the $\H(\curl,\Omega)$ norm.

The full-order error bound away from the boundary
in the $\H(\curl,\Omega_d)$ norm and the $\bC^1(\overline\Omega_d)$ norm then follows from Lemma~\ref{lem:RK-CQ}, using the bounds of
Lemmas~\ref{lemma:bound for evaluations}--\ref{lemma:bound for norms - away} that decay exponentially with $d\,\Re s$, concatenated with the bound \eqref{L-h-bound}. This completes the proof of the error bounds.
\qed
\end{proof}

\section{Implementation and numerical experiments}\label{section:numerics}
We start this final section with a few words on the implementation and then present the results of numerical experiments. The codes which were used to generate the figures in this section are distributed via \cite{Codes}. %\href{https://github.com/joerg-nick/CQMaxwell}{https://github.com/joerg-nick/CQMaxwell} .
\subsection{Implementation}
The convolution quadrature weights are approximated by discretizing their Cauchy-integral representation with the trapezoidal rule, as already described in \cite{L88b}. This gives the approximation to the weights
\begin{align}\label{cqweights - trapezoidal}
\cqW_n(\cqK)&\approx \dfrac{\rho^{-n}}{L}\sum_{l=0}^{L-1}\cqK\left(\dfrac{ \Delta(\rho\, \zeta_L^{-l})}{\tau}\right)\zeta_L^{nl}, \quad \text{for } 0\le n \le N,
\end{align}
where $\zeta_L=  e^{2\pi i  /L}$. The parameters are chosen such that $L=N+1$ and  $\rho^N=\sqrt{\epsilon}$, where $\epsilon$ denotes the machine precision.

To evaluate the analytic operator family $\cqK(\Delta(\zeta)/\tau)$, for the matrix valued characteristic function $\Delta(\zeta)\in \mathbb{C}^{m\times m}$ at a point $\zeta\in\mathbb{C}$ inside of the unit circle, it is convenient to diagonalize the characteristic function by 
\begin{align*}
	\cqT^{-1}\cqK\left(\Delta(\zeta)\right)\cqT=\cqK\left(\cqT^{-1}\Delta(\zeta)\cqT\right), \quad \text{for invertible } \ \ \cqT \in \mathbb{C}^{m\times m},
\end{align*}
which reduces the evaluation $\cqK(\Delta(\zeta)/\tau)$ to evaluating $\cqK(\cdot)$ at the eigenvalues of $\Delta(\zeta)$.
Plugging the approximations to the quadrature weights into \eqref{rkcq} then gives the scheme
\begin{align*}
\left(\cqK(\pt^\tau)\cqg\right)^n \approx
\dfrac{\rho^{-n}}{L}\sum_{l=0}^{L-1}\zeta_L^{ln} \cqK\left(\dfrac{\Delta(\rho \,\zeta^{-l}_{L})}{\tau} \right) \left[ \sum_{j=0}^N \rho^j \cqg^j \zeta^{-jl}_{L} \right].
\end{align*}
The sums above are realized effectively by the application of FFTs, which leaves the main computational obstacle at the evaluations of the Laplace domain operators $\cqK(\cdot)$ at $mL$ scalar frequencies $s_k \in \mathbb{C}$ for $k = 1, \dotsc, mL$ (i.e.~the collection of eigenvalues of $\cqK(\Delta( \rho\, \zeta_L^{-l} )/\tau)$) with positive real part.
Setting either $\cqK(s)=\A_h(s)^{-1}$ or $\cqK(s)=\U_h(s)$ then  gives schemes to approximate the boundary densities $(\bvar,\bpsi)$ or the electromagnetic fields $\E,\H$, respectively. 

We note that due to symmetric properties of the time-harmonic operators, only half of the Laplace domain evaluations have to be computed \cite{BS09}.

Our numerical experiments were conducted in Python, where the appearing potential and boundary operators were discretized with the library Bempp \cite{Bempp}. As space discretization we choose Raviart--Thomas elements of order $0$ and the arising linear systems were iteratively solved with GMRES. 
%All experiments were conducted with Raviart--Thomas boundary elements. 
The anti-symmetric pairing appearing in the weak formulation \eqref{Asoperator} was realized by choosing corresponding N\'ed\'elec boundary elements as the test space. 

\subsection{Numerical Experiments}
We present two types of numerical experiments.
\begin{itemize}
	\item[-] Convergence experiments, where the errors between the numerical solution and a reference solution are presented, for various mesh sizes and time step sizes, and for different values of $\delta$.
	\item[-] We present the computed numerical solution of a three-dimensional scattering problem with a torus as the obstacle.
\end{itemize}

We test the proposed numerical method with an incidental electric planar wave that solves Maxwell's equations on $\mathbb{R}^3 $, which we set to be
\begin{align}
\label{eq:incident-wave}
	\Einc(t,x)=e^{-50(t-x_3-t_0)^2}e_1,
\end{align}
where $\boldsymbol{e_1} =(1,0,0)^T$ and $t_0=-2$. This incidental wave is scattered from a unit sphere centered around the origin, where we applied the generalized impedance boundary condition corresponding to $\Z(\pt)=\delta \pt^{1/2}$, with $\delta=10^{-1},10$. 
The reference solution is computed using a $0$-th order Raviart--Thomas boundary element space discretization with $23871$ degrees of freedoms and the $3$-stage Radau IIA time discretization of order 5 with $N=2^{10}$ time steps.  

In Figure~\ref{fig:time_conv} and \ref{fig:space_conv} we report on a numerical experiment illustrating the error estimate of Theorem~\ref{thm:error-full}. We plot the error of the point evaluation at ${\boldsymb P  = (2,0,0)}$ between numerical approximation $\E_h^\tau(\boldsymb P,t_n)$ and the reference solution $\E_{ref}(\boldsymb P,t_n)$.

The logarithmic plots in Figure~\ref{fig:time_conv} show the errors against the time step size $\tau$, the lines marked with different symbols correspond to different mesh widths $h$ given in the plot.
Figure~\ref{fig:space_conv} contains the same plots for $\delta = 10^{-1}$ (left) and $\delta = 10$ (right), but reversing the roles of $\tau$ and $h$.

In Figure~\ref{fig:time_conv} we can observe a region where the temporal discretization error dominates, and a region where the spatial discretization error dominates (the curves are flattening out). In the region with small spatial error, we can observe that the error curves match the order of convergence of our theoretical results (note the reference lines), of full classical order $O(\tau^{2m-1})$.

Similarly, for Figure~\ref{fig:space_conv} an analogous description applies but with reversed roles.  Although the error estimates of Theorem~\ref{thm:error-full} are $\delta$-independent, in view of Remark~\ref{remark:bestappprox}, we expect a $\delta$-explicit error bound $O(h^{k+3/2}+ \delta^{1/2} h^{k+1})$.
Figure~\ref{Fig2} reports on the spatial convergence rates with $k=0$. On the left-hand side we can observe that since $\delta$ is small enough the first spatial term dominates in the above error estimate, matching the spatial order $O(h^{3/2})$. On the right-hand side, with a large enough $\delta$ the second term is dominating, matching the spatial order $O(h)$.

\begin{figure}[htbp]
	\label{Fig1}
	\centering
	\includegraphics[trim = 0mm 0mm 0mm 0mm, clip,width=0.8\textwidth,height=0.55\textwidth]{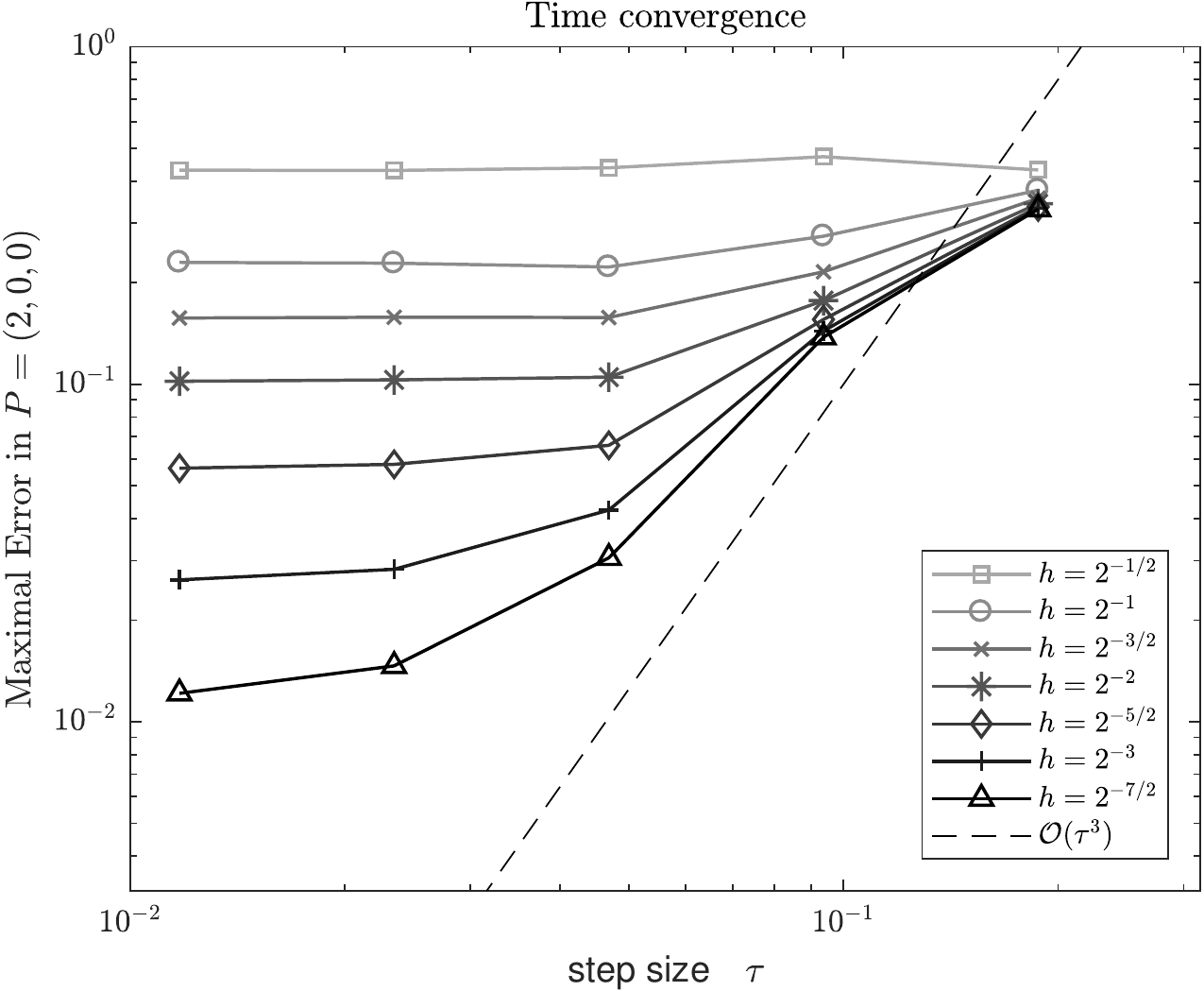}
	\caption{Convergence plot in time for the fully discrete problem, with $\delta = 0.1$}
	\label{fig:time_conv}
\end{figure}

\begin{figure}[htbp]
	\label{Fig2}
	\hspace*{-1.5cm}
	\includegraphics[trim = 0mm 0mm 0mm 0mm, clip,width=1.2\textwidth,height=0.55\textwidth]{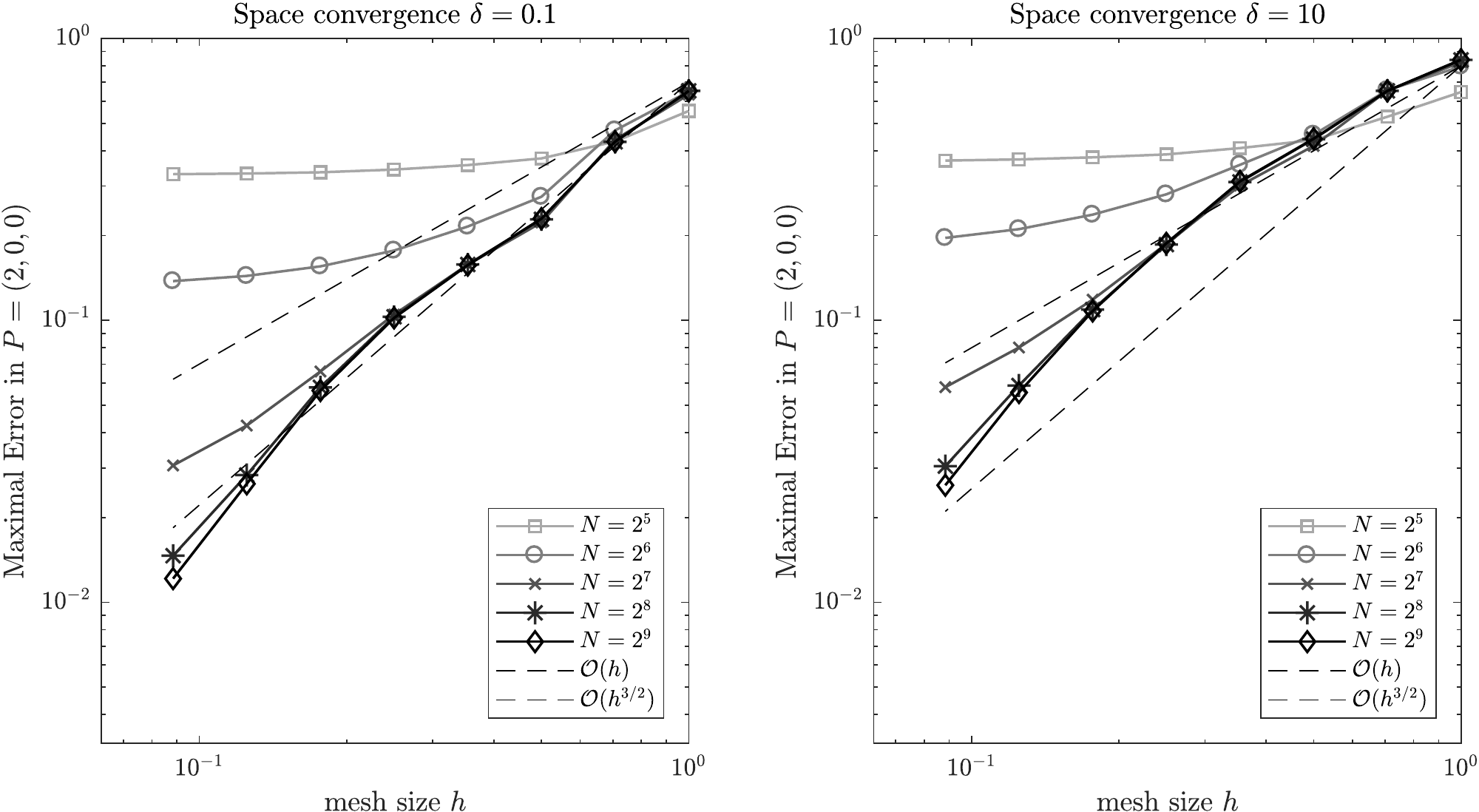}
	\caption{Space convergence plot of the fully discrete system, with varying layer thickness $\delta = 10^{-2}$ (left), and $\delta = 10$ (right).}
	\label{fig:space_conv}
\end{figure}
We conclude our investigations with a visual representation of the scattering arising from a torus with a revolving circle of radius $r=0.2$, where the outer centres lie on a circle of radius $R=0.8$.  The incidental wave \eqref{eq:incident-wave} with $t_0=-1$ is scattered by absorbing boundary conditions corresponding to the impedance operator $\Z(\pt)=\delta\pt^{1/2}$ with $\delta=0.1$ on the torus.

We discretize the described problem in space with $0$-th order Raviart--Thomas boundary elements with $2688$ degrees of freedom and apply convolution quadrature based on the $3$-stage Radau IIA method with $N=100$ time steps.
The left-hand side plot of Figure~\ref{fig:cond} visualizes the frequencies $s_k$ for $k = 1, \dotsc, mL$, at which the Laplace domain operator $\U_h(s_k)$ has to be evaluated. 
%The scatter plot reveals the closed contour on the complex plane on which the frequencies lie. 
The plot on the right-hand side shows condition numbers and norms of the matrix arising from $\A_h(s)$ and its inverse, as one follows the contour depicted before. We observe that the condition number remains relatively mild, which makes iterative solvers accessible to the problem at hand.

 Figure~\ref{fig:frames} then shows the total wave $\E\tot$ on the $x_2=0$ plane at different times.

\begin{figure}[htbp]
	\hspace*{-0cm}
	\includegraphics[trim = 0mm 0mm 0mm 0mm, clip,width=1\textwidth,height=0.5\textwidth]{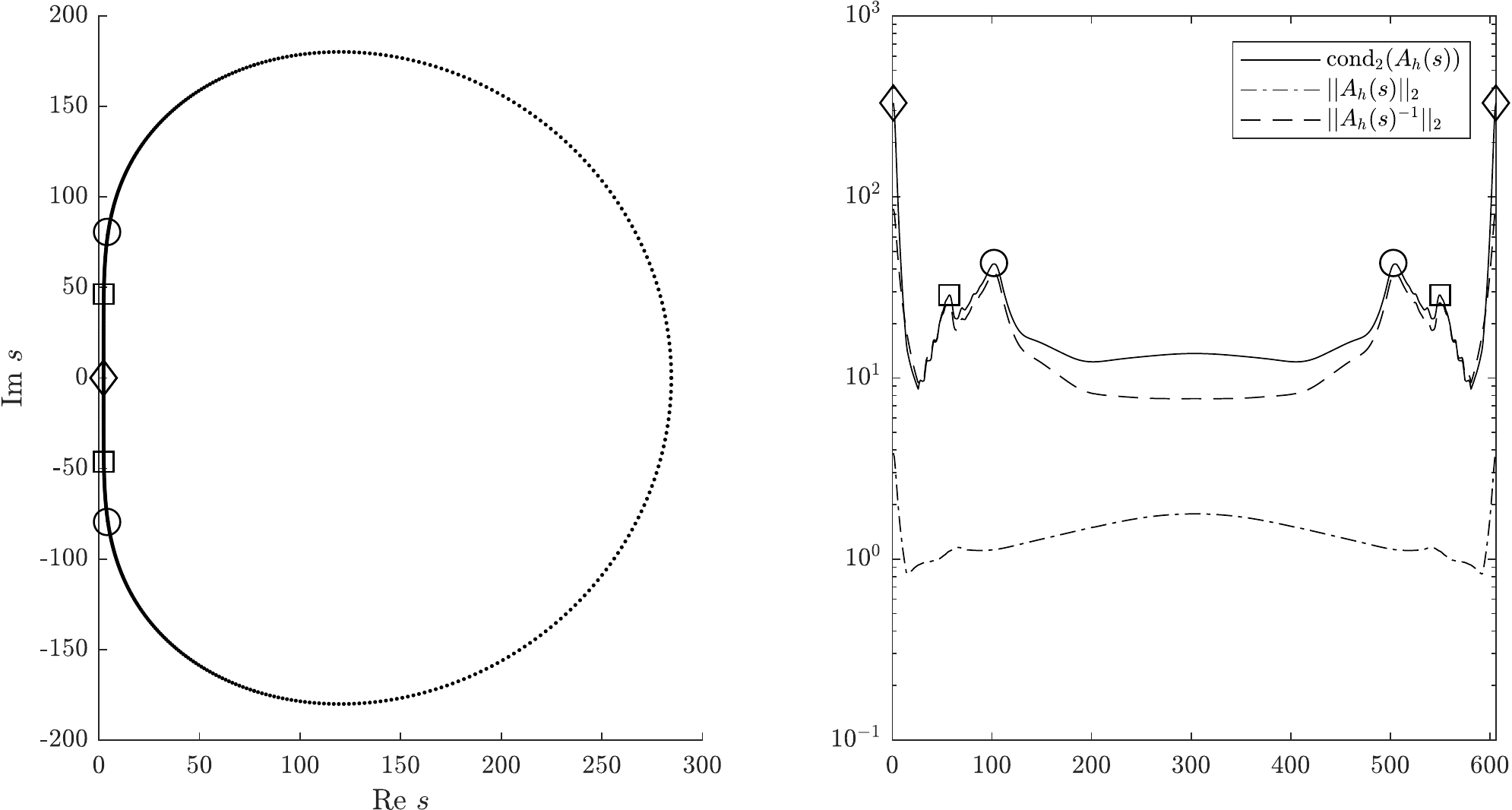}
	\caption{The left-hand side plot shows a plot of the occurring frequencies for the $3$-stage Radau IIA method for $N=100$ and $T=4$. On the right-hand side, the condition numbers and the euclidean norms of the occurring matrices are shown, as they appear when following the integral contour on the left-hand side. The markers on both plots localize the corresponding spikes of the condition numbers on the integral contour.}
	\label{fig:cond}
\end{figure}

%\begin{figure}[htbp]
%\label{Fig3}
%	\centering
%\	\\[-5ex] \hspace*{0.5cm}
%\includegraphics[trim = 1mm 1mm 15mm 2mm, clip,width=0.8\textwidth,height=1.6\textwidth]{MaxwellFrames.eps}
%\\[-8ex]
%\caption{3D-scattering arising from a torus, visualized at different times. Shown is the $y=0$ plane, through the middle of the scatterer and the boundary condition employed is \eqref{eq:gibc_absorbing_ord1} with $\delta=0.1$}
%\end{figure}

\begin{figure}[htp]
	\label{Fig3}
	\centering
	\includegraphics[trim = 1mm 10mm 20mm 11mm, clip,width=0.8\textwidth,height=1.6\textwidth]{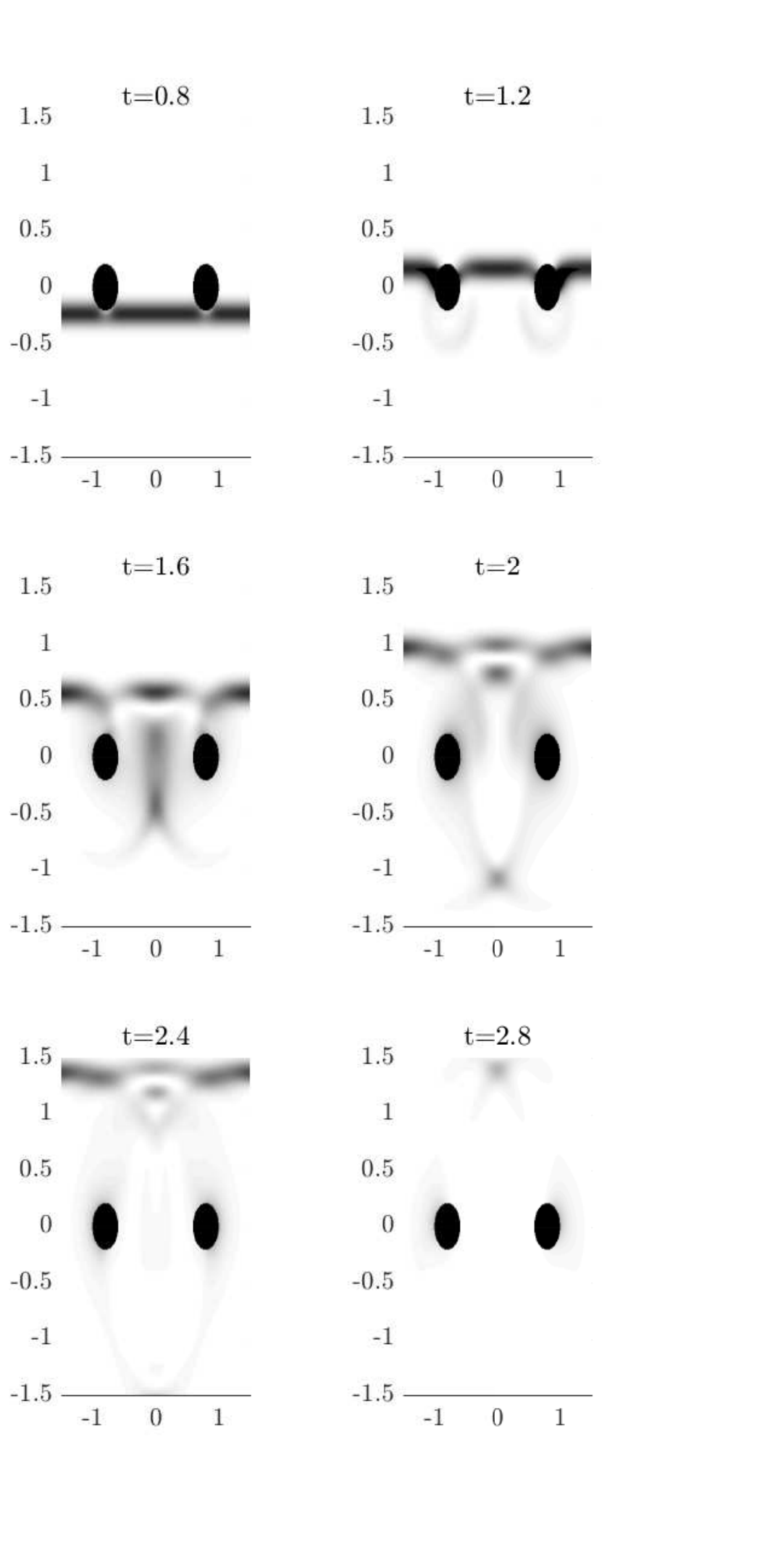}
	\\[-8ex]
	\caption{3D-scattering arising from a torus, visualized at different times. Shown is the $y=0$ plane, through the middle of the scatterer and the boundary condition employed is \eqref{eq:gibc_absorbing_ord1} with $\delta=0.1$}
	\label{fig:frames}
\end{figure}

\section*{Acknowledgement}
We thank Felix Hagemann for his advice regarding the BEM implementation.
The authors are supported by the Deutsche Forschungsgemeinschaft (DFG, German Research Foundation) -- Project-ID 258734477 -- SFB 1173. The work of Bal\'azs Kov\'acs is also supported by the Heisenberg Programme of the Deutsche Forschungsgemeinschaft -- Project-ID 446431602.

%\begin{acknowledgements}
%If you'd like to thank anyone, place your comments here
%and remove the percent signs.
%\end{acknowledgements}

% BibTeX users please use one of
%\bibliographystyle{spmpsci}      % mathematics and physical sciences
\bibliographystyle{abbrv}
\bibliography{Lit}

\begin{thebibliography}{10}

\bibitem{AV96}
A.~Alonso and A.~Valli.
\newblock Some remarks on the characterization of the space of tangential
  traces of {$H (\textrm{rot}; \Omega)$} and the construction of an extension
  operator.
\newblock {\em Manuscripta Math.}, 89(1):159--178, 1996.

\bibitem{AH97}
H.~Ammari and S.~He.
\newblock Generalized effective impedance boundary conditions for an
  inhomogeneous thin layer in electromagnetic scattering.
\newblock {\em J. Electromagn. Waves Appl.}, 11(9):1197--1212, 1997.

\bibitem{AN96}
H.~Ammari and J.-C. N{\'e}d{\'e}lec.
\newblock Sur les conditions d'imp{\'e}dance g{\'e}n{\'e}ralis{\'e}es pour les
  couches minces.
\newblock {\em C. R. Math. Acad. Sci. Paris}, 322(10):995--1000, 1996.

\bibitem{AN99}
H.~Ammari and J.-C. N{\'e}d{\'e}lec.
\newblock Generalized impedance boundary conditions for the {M}axwell equations
  as singular perturbations problems.
\newblock {\em Comm. Partial Differential Equations}, 24(5-6):24--38, 1999.

\bibitem{BBSV13}
J.~Ballani, L.~Banjai, S.~Sauter, and A.~Veit.
\newblock Numerical solution of exterior {M}axwell problems by {G}alerkin {BEM}
  and {R}unge--{K}utta convolution quadrature.
\newblock {\em Numer. Math.}, 123(4):643--670, 2013.

\bibitem{BanjaiKachanovska}
L.~Banjai and M.~Kachanovska.
\newblock Sparsity of {R}unge--{K}utta convolution weights for the
  three-dimensional wave equation.
\newblock {\em BIT}, 54(4):901--936, 2014.

\bibitem{BanjaiLubich2019}
L.~Banjai and C.~Lubich.
\newblock Runge--{K}utta convolution coercivity and its use for time-dependent
  boundary integral equations.
\newblock {\em IMA J. Numer. Anal.}, 39(3):1134--1157, 2019.

\bibitem{BLM11}
L.~Banjai, C.~Lubich, and J.~M. Melenk.
\newblock {R}unge--{K}utta convolution quadrature for operators arising in wave
  propagation.
\newblock {\em Numer. Math.}, 119(1):1--20, 2011.

\bibitem{BLN20}
L.~Banjai, C.~Lubich, and J.~Nick.
\newblock Time-dependent acoustic scattering from generalized impedance
  boundary conditions via boundary elements and convolution quadrature.
\newblock {\em IMA J. Numer. Anal.}, page draa091, 2021.

\bibitem{BLS15}
L.~Banjai, C.~Lubich, and F.-J. Sayas.
\newblock Stable numerical coupling of exterior and interior problems for the
  wave equation.
\newblock {\em Numer. Math.}, 129(4):611--646, 2015.

\bibitem{BanjaiMessnerSchanz}
L.~Banjai, M.~Messner, and M.~Schanz.
\newblock Runge--{K}utta convolution quadrature for the boundary element
  method.
\newblock {\em Comput. Methods Appl. Mech. Engrg.}, 245/246:90--101, 2012.

\bibitem{BanjaiRieder}
L.~Banjai and A.~Rieder.
\newblock Convolution quadrature for the wave equation with a nonlinear
  impedance boundary condition.
\newblock {\em Math. Comp.}, 87(312):1783--1819, 2018.

\bibitem{BS09}
L.~Banjai and S.~Sauter.
\newblock Rapid solution of the wave equation in unbounded domains.
\newblock {\em SIAM J. Numer. Anal.}, 47(1):227--249, 2009.

\bibitem{BreF91}
F.~Brezzi and M.~Fortin.
\newblock {\em Mixed and hybrid finite element methods}, volume~15 of {\em
  Springer Series in Computational Mathematics}.
\newblock Springer-Verlag, New York, 1991.

\bibitem{BC03}
A.~Buffa and S.~H. Christiansen.
\newblock The electric field integral equation on {L}ipschitz screens:
  definitions and numerical approximation.
\newblock {\em Numer. Math.}, 94(2):229--267, 2003.

\bibitem{BCS02}
A.~Buffa, M.~Costabel, and D.~Sheen.
\newblock On traces for {$H(\curl, \Om)$} in {L}ipschitz domains.
\newblock {\em J. Math. Anal. Appl.}, 276(2):845--867, 2002.

\bibitem{BH03}
A.~Buffa and R.~Hiptmair.
\newblock Galerkin boundary element methods for electromagnetic scattering.
\newblock In {\em Topics in computational wave propagation}, volume~31 of {\em
  Lect. Notes Comput. Sci. Eng.}, pages 83--124. Springer, Berlin, 2003.

\bibitem{ChanMonk2015}
J.~F.-C. Chan and P.~Monk.
\newblock Time dependent electromagnetic scattering by a penetrable obstacle.
\newblock {\em BIT}, 55(1):5--31, 2015.

\bibitem{Ch16}
N.~Chaulet.
\newblock The electromagnetic scattering problem with generalized impedance
  boundary conditions.
\newblock {\em ESAIM Math. Model. Numer. Anal.}, 50(3):905--920, 2016.

\bibitem{ChenMonkWangWeile}
Q.~Chen, P.~Monk, X.~Wang, and D.~Weile.
\newblock Analysis of convolution quadrature applied to the time-domain
  electric field integral equation.
\newblock {\em Commun. Comput. Phys.}, 11(2):383--399, 2012.

\bibitem{EN93}
B.~Engquist and J.-C. N{\'e}d{\'e}lec.
\newblock Effective boundary conditions for acoustic and electromagnetic
  scattering in thin layers.
\newblock Technical report, Technical Report of CMAP, 278, 1993.

\bibitem{GLT20}
F.~Z. Goffi, K.~Lemrabet, and T.~Arens.
\newblock Approximate impedance for time-harmonic {M}axwell's equations in a
  non planar domain with contrasted multi-thin layers.
\newblock {\em J. Math. Anal. Appl.}, pages 124--141, 2020.

\bibitem{HJ02}
H.~Haddar and P.~Joly.
\newblock Stability of thin layer approximation of electromagnetic waves
  scattering by linear and nonlinear coatings.
\newblock {\em J. Comput. Appl. Math.}, 143(2):201--236, 2002.

\bibitem{HJN08}
H.~Haddar, P.~Joly, and H.-M. Nguyen.
\newblock Generalized impedance boundary conditions for scattering problems
  from strongly absorbing obstacles: The case of {M}axwell's equations.
\newblock {\em Mathematical Models and Methods in Applied Sciences},
  18(10):1787--1827, 2008.

\bibitem{HairerWannerII}
E.~Hairer and G.~Wanner.
\newblock {\em Solving ordinary differential equations. {II}: {S}tiff and
  differential-algebraic problems}, volume~14 of {\em Springer Series in
  Computational Mathematics}.
\newblock Springer-Verlag, Berlin, 1991.

\bibitem{HiptmairLopezFernandezPaganini}
R.~Hiptmair, M.~L\'{o}pez-Fern\'{a}ndez, and A.~Paganini.
\newblock Fast convolution quadrature based impedance boundary conditions.
\newblock {\em J. Comput. Appl. Math.}, 263:500--517, 2014.

\bibitem{KL17}
B.~Kov\'{a}cs and C.~Lubich.
\newblock Stable and convergent fully discrete interior--exterior coupling of
  {M}axwell’s equations.
\newblock {\em Numer. Math.}, 137(1):91--117, 2017.

\bibitem{LS09}
A.~R. Laliena and F.-J. Sayas.
\newblock Theoretical aspects of the application of convolution quadrature to
  scattering of acoustic waves.
\newblock {\em Numer. Math.}, 112(4):637--678, 2009.

\bibitem{L88b}
C.~Lubich.
\newblock Convolution quadrature and discretized operational calculus. {II}.
\newblock {\em Numer. Math.}, 52(4):413--425, 1988.

\bibitem{L94}
C.~Lubich.
\newblock On the multistep time discretization of linear initial-boundary value
  problems and their boundary integral equations.
\newblock {\em Numer. Math.}, 67(3):365--389, 1994.

\bibitem{LubichOstermann_RKcq}
C.~Lubich and A.~Ostermann.
\newblock Runge--{K}utta methods for parabolic equations and convolution
  quadrature.
\newblock {\em Math. Comp.}, 60(201):105--131, 1993.

\bibitem{Monk_book}
P.~Monk.
\newblock {\em Finite element methods for {M}axwell's equations}.
\newblock Numerical Mathematics and Scientific Computation. Oxford University
  Press, New York, 2003.

\bibitem{Ned01}
J.-C. N{\'e}d{\'e}lec.
\newblock {\em Acoustic and electromagnetic equations: integral representations
  for harmonic problems}.
\newblock Springer, 2001.

\bibitem{Codes}
J.~Nick, B.~Kov\'{a}cs, and C.~Lubich.
\newblock Accompanying codes provided via {G}it{H}ub.
  https://github.com/joerg-nick/{CQM}axwell.

\bibitem{NKL2020}
J.~Nick, B.~Kov\'{a}cs, and C.~Lubich.
\newblock Erratum: Stable and convergent fully discrete interior--exterior
  coupling of {M}axwell’s equations.
\newblock 2020.
\newblock arXiv:1605.04086.

\bibitem{RT77}
P.-A. Raviart and J.~M. Thomas.
\newblock A mixed finite element method for 2nd order elliptic problems.
\newblock In {\em Mathematical aspects of finite element methods ({P}roc.
  {C}onf., {C}onsiglio {N}az. delle {R}icerche ({C}.{N}.{R}.), {R}ome, 1975)},
  pages 292--315. Lecture Notes in Math., Vol. 606, 1977.

\bibitem{S16}
F.-J. Sayas.
\newblock {\em Retarded potentials and time domain boundary integral equations:
  A road map}, volume~50 of {\em Springer Series in Computational Mathematics}.
\newblock Springer, Heidelberg, 2016.

\bibitem{SchH15}
K.~Schmidt and R.~Hiptmair.
\newblock Asymptotic boundary element methods for thin conducting sheets.
\newblock {\em Discrete Contin. Dyn. Syst. Ser. S}, 8(3):619--647, 2015.

\bibitem{Bempp}
W.~{\'S}migaj, T.~Betcke, S.~Arridge, J.~Phillips, and M.~Schweiger.
\newblock Solving boundary integral problems with {BEM++}.
\newblock {\em ACM Trans. Math. Software}, 41(2):1--40, 2015.

\end{thebibliography}

\end{document}